\numberwithin{equation}{section}
\newtheorem{theorem}{Theorem}[section]
\newtheorem{proposition}[theorem]{Proposition}
\newtheorem{lemma}[theorem]{Lemma}
\newtheorem{corollary}[theorem]{Corollary}
\theoremstyle{definition}
\newtheorem{remark}[theorem]{Remark}
\date{}
\def\N{\mathbb N}
\def\R{\mathbb R}
\def\C{\mathbb C}
\def\Z{\mathbb Z}
\def\DLM{T^{-\lambda}_{q}L_p}
\def\DLMinfty{T^{-\lambda}_{\infty}L_p}
\begin{document}

\title{Fourier inequalities in Morrey and Campanato spaces}

\author{Alberto Debernardi Pinos}
\address{A. Debernardi Pinos \\
Universitat Aut\`onoma de Barcelona\\
Departament de Matem\`atiques\\
Campus de Bellaterra, Edifici C\\
08193 Bellaterra (Barcelona)\\
Spain} \email{adebernardipinos@gmail.com}

\author{Erlan Nursultanov}
\address{E. Nursultanov \\
 Lomonosov Moscow State University (Kazakh Branch) and
Gumilyov Eurasian National University, Munatpasova 7
\\010010
Astana Kazakhstan} \email{er-nurs@yandex.ru}

\author{Sergey Tikhonov}

\address{S. Tikhonov \\
 Centre de Recerca Matem\`atica\\
Campus de Bellaterra, Edifici~C 08193 Bellaterra (Barcelona), Spain; ICREA, Pg.
Llu\'is Companys 23, 08010 Barcelona, Spain, and Universitat Aut\`onoma de
Barcelona.}
\email{stikhonov@crm.cat}

\date{\today}

\keywords{Fourier inequalities, Morrey spaces, Campanato spaces}

\subjclass{Primary: 42B10. Secondary: 42B35}

\thanks{The first author was partially supported by the Catalan AGAUR, through the Beatriu de Pin\'os program (2021BP  00072), and by CIDMA, through the Portuguese FCT (UIDP/04106/2020 and UIDB/04106/2020).
 The third author was partially supported
 by grants PID2020-114948GB-I00, 2021 SGR 00087,
 by the CERCA Programme of the Generalitat de Catalunya, and by the
Spanish State Research Agency, through the Severo Ochoa and Mar{\'\i}a de
Maeztu Program for Centers and Units of Excellence in R$\&$D
(CEX2020-001084-M).
The work  was partially supported by the Ministry of Education and Science of the Republic of Kazakhstan (grants AP14870758, AP14870361).}

\begin{abstract}
	We study  norm inequalities for the Fourier transform, namely,
	\begin{equation}\label{introduction}
		\|\widehat f\|_{X_{p,q}^\lambda} \lesssim
		\|f\|_{Y},
	\end{equation} where $X$ is either a Morrey or Campanato space and $Y$ is an appropriate function space.
	In the case of the Morrey space we sharpen  the estimate
	$
	\|\widehat f\|_{M_{p,q}^\lambda}
	\lesssim
	\|f\|_{L_{s',q}},$ $ s\geq 2,$
	$\frac{1}{s} = \frac{1}{p}-\frac{\lambda}{n}.$
	We also show that \eqref{introduction} does not hold when both $X$ and $Y$ are Morrey spaces.
	
	If $X$ is a Campanato space, we prove that \eqref{introduction} holds for $Y$ being the truncated Lebesgue  space.
\end{abstract}

\maketitle

	\section{Introduction}
	Let $0<p<\infty$ and $0\leq \lambda\leq \frac{n}{p}$. We say that a function $f$
 belongs to the Morrey space $M_p^\lambda=M_p^\lambda (\R^n)$ if $f\in L_p^{\textrm{loc}}(\R^n)$ and
	\[
	\| f\|_{M_p^\lambda}  = \sup_{r>0}r^{-\lambda}\sup_{x\in \R^n}\| f\|_{L^p(B_r(x))},
	\]
	where $B_r(x)$ is the ball of radius $r$ centered at $x\in \R^n$.

For $f\in L_1(\R^n)$, we define the Fourier transform 
	\[
	\widehat{f}(y)= \int_{\R^n} f(x)e^{-2\pi i(x,y)}\, dx.
	\]

	Our main goal
 is to establish sufficient conditions for a function $f$ so that its Fourier transform  belongs to  Morrey or Campanato-type spaces.
 Let us mention that
the theory of  Fourier inequalities with general weights
  has been extensively   developed since 1980s, see, e.g., \cite{BH} and the references therein as well as the recent papers
  \cite{rev, sin}.
  
In what follows, considering Fourier norm inequalities,  we will assume that $f$ is integrable, since in this case, by a standard argument, there exists an extension of the integral transform  to a linear operator on the whole corresponding  spaces.

We start with the following simple observation concerning the Morrey space. Combining the known embeddings between Morrey and Lebesgue spaces (see, e.g., \cite[Theorem~2.2]{RSS}) and the Hausdorff-Young inequality \cite{hunt, SW}, one immediately arrives at
\begin{equation}
\label{EQMorrey-Lorentzembedding}
\|\widehat f\|_{M_p^\lambda}
\lesssim\|\widehat f\|_{L_s}\lesssim \|f\|_{L_{s',s}}
\lesssim \| f\|_{L_{s'}}, \qquad s\geq 2,
\end{equation}
where $\frac{1}{s} = \frac{1}{p}-\frac{\lambda}{n}$, $s'=\frac{s}{s-1}$, and $L_{s',s}=L_{s',s}(\R^n)$ denotes the classical Lorentz space. Moreover, by homogeneity, the relation $\frac{1}{s} = \frac{1}{p}-\frac{\lambda}{n}$ is necessary for inequality \eqref{EQMorrey-Lorentzembedding} to hold.

We will see that the embedding   between Morrey and Lorentz spaces (see Lemma~\ref{LEMmorrey-lorentzembedding} below), rather than Morrey--Lebesgue embeddings, implies a sharper version of \eqref{EQMorrey-Lorentzembedding}, namely,
\begin{equation}
	\label{EQmorrey-weaklqineq}
	\|\widehat f\|_{M_p^\lambda}\lesssim\|\widehat f\|_{L_{s,\infty}}\lesssim \|f\|_{L_{s',\infty}}, \qquad s>2,
\end{equation}	
with $\frac{1}{s} = \frac{1}{p}-\frac{\lambda}{n}$. One of our main results (Theorem~\ref{THMmorreynoweight}) states that the inequality $\|\widehat f\|_{M_p^\lambda}\lesssim \|f\|_{L_{s',\infty}}$ can be substantially improved. In particular, we prove that, for $0<\lambda \leq \frac{n}{p}$, $p\geq 2$,
\[
	\| \widehat{f}\|_{M_{p}^\lambda}\lesssim \sup_{m\in \Z}\sup_{e\subset \Z^n} \frac{(1+\ln |e|)^{n+1}}{|e|^\frac{\lambda}{n}}\frac{1}{\big|\cup_{k\in e} Q_k^m \big|^{\frac{1}{p}-\frac{\lambda}{n}}}\int_{\cup_{k\in e} Q_k^m} |f(x)|\, dx,
\]
where $Q_{k}^m = [0,2^m)^n+2^m k$, $k\in \Z^n$ and $m\in \Z$. In Corollary~\ref{Cormorreynoweight} we will see that the latter sharpens \eqref{EQmorrey-weaklqineq}.

Importantly,  \eqref{EQmorrey-weaklqineq} can be extended for the whole range of the Lorentz spaces, namely,
\[
\| \widehat{f}\|_{M_{p,q}^\lambda}\lesssim \| {f}\|_{L_{s',q}},
\]
where $0<\frac{1}{s}=\frac{1}{p}-\frac{\lambda}{n}<\frac{1}{2}$ and
$0<q\le \infty$. Here $M_{p,q}^\lambda$ is the three parameter Morrey space,
see the precise definitions in the next section.

We would like to mention that  Fourier inequalities between two Morrey norms are not valid in general, that is, if $\lambda>0$, there is no constant $C$ such that
$
\| \widehat{f} \|_{M^{\nu}_q(\R^n)}\le C\|f \|_{M^{\lambda}_p(\R^n)}
$
for every $f\in M^{\lambda}_p(\R^n)\cap L_1(\R^n)$; see Appendix A.
Replacing the Morrey norm by the (weighted) Lebesgue or Lorentz norm in the left-hand side, the inequality becomes meaningful, cf. \eqref{EQMorrey-Lorentzembedding} and \eqref{EQmorrey-weaklqineq}.

Furthermore, using embeddings  between Morrey and Lebesgue spaces and the known Fourier  inequalities in   Lebesgue spaces we will show that the weighted inequality
\[
	\| |x|^{-\delta} \widehat{f}\|_{M_{q}^{\nu}}\lesssim \| |x|^\gamma f\|_{L_p}
\]
 holds  if and only if
	\[
	\max\bigg\{ 0, n\bigg(\frac{1}{q} -\frac{1}{p'} \bigg)-\nu \bigg\}\leq \delta<\frac{n}{q}-\nu, \qquad
	\gamma =\delta+n\bigg(\frac{1}{p'}-\frac{1}{q}\bigg)+\nu.
	\]
This extends  the classical Pitt inequality
\[
	\| |x|^{-\delta} \widehat{f}\|_{L_{s}}\lesssim \| |x|^\gamma f\|_{L_p},\qquad 1<p\leq s<\infty,
\]
provided
$		\max\big\{ 0, n\big( \frac{1}{s}-\frac{1}{p'} \big)  \big\}\leq \delta<\frac{n}{s},$
see, e.g., \cite{BH}.

We also consider the Campanato-type spaces $C_{p,q}^{\lambda}$. These spaces have an additional integrability parameter $q$, so that $C_{p,\infty}^{\lambda}$ corresponds to the classical Campanato  space \eqref{camp+}.
In Section 3.3,
we will  see   that  $C_{p,q}^{\lambda}$ coincides (and the corresponding norms are equivalent) either  with the Morrey space $M_{p,q}^{\lambda}$ when
$0< \lambda< \frac{n}{p}$
or with the Besov space $B_{\infty,q}^{\lambda-\frac np}$ when $\frac{n}{p}< \lambda< \frac{n}{p}+1$.
For $p\geq 1$, in the case $q=\infty$ this  recovers the classical result that
the Campanato space (defined over bounded domains $\Omega$ satisfying certain conditions) generalizes the scale of Morrey spaces. More precisely,
	\begin{equation}
	\label{EQcampanatocharact}
	{{C}}_p^\lambda(\Omega)  =\begin{cases}
	M_p^\lambda(\Omega),&\text{if }0\leq \lambda <\frac{n}{p},\\
	BMO(\Omega),&\text{if }\lambda=\frac{n}{p},\\
	\textrm{Lip } \alpha (\overline{\Omega}),&\text{if }\lambda = \frac{n}{p}+\alpha, \, 0<\alpha\leq 1,
	\end{cases}
	\end{equation}
 see \cite[\S 1.7.2]{triebelII} as well as the pioneering  papers by  Campanato  \cite{Ca} for $\lambda\neq\frac{n}{p}$ and John and Nirenberg \cite{JN}
 for $\lambda=\frac{n}{p}$. In the case $\lambda>\frac{n}{p}+1$ the  space ${{C}}_p^\lambda(\Omega)$ is trivial.

The Fourier inequalities we obtain in the Campanato spaces $C_{p,q}^{\lambda}$ with $\frac{n}{p}< \lambda< \frac{n}{p}+1$ differ from those in the Morrey spaces. We claim that
\begin{equation}
	\label{hhh}
	 | \widehat{f} |_{C_{p,q}^\lambda}\lesssim
 \|  f\|_{
 T^{\lambda-\frac{n}{p}}_{q}L_1},
\end{equation}
where the right-hand side is the norm in the truncated Lebesgue space, see \eqref{localtruncated}. In particular, these yield corresponding inequalities for Besov spaces, for instance,
$
  |\widehat{f}|_{B_{\infty,1}^\alpha}
\lesssim   \big\||x|^\alpha f\big\|_1
$, $0<\alpha<1$,
cf. Remark~\ref{THMcampanato'}.

The outline of the paper is as follows.
In Section~\ref{SECfunctionspacesdef} we introduce the Morrey and Campanato-type  spaces.  In Section~\ref{SECfunctionspacesprop} we study useful properties of these spaces in detail, such as norm discretization, embeddings, and interpolation.

    Section~\ref{SECfouriermorrey} contains   inequalities for the Fourier transform in  Morrey-type spaces (both in the weighted and unweighted settings).
       In Section~\ref{section5} we study Pitt's inequalities between Morrey and Lebesgue spaces. In particular, we find necessary and sufficient conditions
for $		\| |x|^{-\delta} \widehat{f}\|_{M_{q}^{\nu}}\lesssim \| |x|^\gamma f\|_{L_p}
$,
provided that
$1<p<\infty$, $0<q<\infty$, and
$0<\frac{1}{q}-\frac{\nu}{n}<\frac{1}{p}.$

In   Section~\ref{SECfouriercampanato} we prove an analogue of inequality  \eqref{hhh} for weighted Campanato-type spaces. We conclude with Appendix~\ref{SECnofourierineq}, where we show that Fourier norm inequalities  between two Morrey spaces are not possible.

Here and in what follows, the symbols $A\lesssim B  $ and $A\gtrsim B$ stand for the inequalities $A\leq CB$ and $A\geq CB$, respectively, where $C$ is a constant not depending of essential quantities.
By  $\chi_E$ we denote the characteristic function of a set $E$.

\vspace{0.6mm}	\section{Function spaces}	\label{SECfunctionspacesdef}

	\subsection{Morrey and Campanato-type spaces}

	For $0<p<\infty$, $0<q\leq \infty$, and $0\le \lambda\leq \frac{n}{p}$, the (global) Morrey-type space $M^{\lambda}_{p,q}=M^{\lambda}_{p,q}(\R^n)$ is defined to be the space of all functions $f\in L_p^{\textrm{loc}}(\R^n)$ such that
	\[
	\| f\|_{M^{\lambda}_{p,q}}= \begin{cases} \displaystyle \bigg( \int_0^\infty \bigg( r^{-\lambda}\sup_{x\in \R^n} \|f\|_{L_p(B_r(x))}\,\bigg)^q\frac{dr}{r} \bigg)^\frac{1}{q}<\infty, &\text{if }q<\infty,\\
	\displaystyle \sup_{r>0}r^{-\lambda} \sup_{x\in \R^n}\| f\|_{L_p(B_r(x))}<\infty, &\text{if }q=\infty.
	\end{cases}
	\]
	Note that $M^\lambda_{p,\infty}= M^\lambda_p$. We also observe that for $q<\infty$, the space $M_{p,q}^{0}$ is trivial, and by the Lebesgue differentiation theorem, so is the space $M_{p,q}^{\frac{n}{p}}$. For $\lambda>\frac{n}{p}$, all the spaces $M_{p,q}^\lambda$ are trivial. For $q<\infty$, similar spaces with the  supremum outside of the integral were considered in \cite{Bu1}.

To define the local Morrey space, we proceed similarly but instead of considering the supremum on $x\in\R^n$, the center of the ball $x_0\in \R^n$ is fixed. In more detail, for $0<p<\infty$, $0<q\leq \infty$, and $\lambda\geq 0$, the local Morrey space $LM^{\lambda}_{p,q}=LM^{\lambda}_{p,q}(\R^n)$ (centered at the origin) is defined as the space of all functions $f\in L_p^{\textrm{loc}}(\R^n)$ such that
	\[
	\| f\|_{LM^{\lambda}_{p,q}} = \begin{cases} \displaystyle \bigg( \int_0^\infty \Big( r^{-\lambda} \|f\|_{L_p(B_r(0))}\Big)^q\frac{dr}{r} \bigg)^\frac{1}{q}, &\text{if }q<\infty,\\
	\displaystyle	\sup_{r>0} r^{-\lambda}\| f\|_{L_p(B_r(0))}, &\text{if }q=\infty.
	\end{cases}
	\]
 We stress that the space $LM_{p,q}^\lambda$ is not trivial if $\lambda>\frac{n}{p}$ (here we correct \cite[Example 23, (3)]{SDFH}), in contrast with the space $M_{p,q}^\lambda$, where we require $0\leq \lambda\leq \frac{n}{p}$. For example, any function $f\in L_p^{\textrm{loc}}(\R^n)$ that is compactly supported away from the origin is such that  $f\in LM_{p,q}^\lambda$ for any $0<\lambda$ and $0<q\leq \infty$. We also observe that $LM_{p,\infty}^0=L_p$. On the other hand, if $q<\infty$ and $\lambda=0$, the space $LM^\lambda_{p,q}$ is trivial.
In the case $\lambda>0$, the local Morrey space coincides with the  truncated Lebesgue space  $T^{-\lambda}_{q}L_p$, which is of amalgam type (see Lemma~\ref{1} below and
 \cite{BCN, NS}).
	
 For $0<p<\infty$, $0<q\leq \infty$, and  $\lambda \in \R$, the truncated Lebesgue spaces $T^{\lambda}_{q}L_p=T^{\lambda}_{q}L_p({\Bbb \R^n})$
 is defined as the space
  of measurable functions $f\in L_p^{\textrm{loc}}(\R^n)$ such that
\begin{equation}
	\label{localtruncated}
	\| f\|_{T^{\lambda}_{q}L_p} = \begin{cases} \displaystyle \bigg( \sum_{k\in \Z} \bigg( 2^{\lambda k} \|f\|_{L_p\big( B_{2^{k+1}}(0)\backslash B_{2^k}(0)\big)} \bigg)^q \bigg)^\frac{1}{q}, &\text{if }q<\infty,\\
		\displaystyle	\sup_{k\in \Z} 2^{\lambda k}\| f\|_{L_p\big(B_{2^{k+1}}(0)\backslash B_{2^k}(0)\big)}, &\text{if }q=\infty.
	\end{cases}
\end{equation}
 The theory of truncated spaces has been recently developed in \cite{oscar}.

		For a function $f$ defined on $\R^n$ we introduce its average over a ball of radius $r$ (with the measure denoted by $|B_r|$)  centered at $x_0\in \R^n$ as
	\[
	A_rf(x_0)=\frac{1}{|B_r|}\int_{B_r(x_0)} f(x)\, dx = \frac{1}{|B_r|}\int_{B_r(0)} f(x+x_0)\, dx.
	\]	
		For $0<p<\infty$ and $0\leq \lambda\leq \frac{n}{p}+1$, the classical Campanato space ${{C}}^\lambda_p={{C}}_p^\lambda(\R^n)$ is defined by
\begin{equation}
	\label{camp+}
	| f|_{{{C}}^\lambda_p } = \sup_{r>0} r^{-\lambda}\sup_{\xi\in \R^n} \bigg( \int_{B_r(0)} |f(x+\xi)-A_rf(\xi)|^p\, dx\bigg)^\frac{1}{p}<\infty,
\end{equation}
where the functional $|\cdot |_{{{C}}^\lambda_p}$ is a quasi-seminorm, which becomes a quasi-norm after appropriate modifications, cf.  \eqref{d3} below.
As we stated in   \eqref{EQcampanatocharact}, the Campanato scale  generalizes the scale of Morrey spaces.

In the same spirit as the Morrey-type spaces $M_{p,q}^\lambda$, we define the spaces $C_{p,q}^\lambda$. Let $0<p<\infty$, $0<q\leq \infty$, and $0\leq\lambda\leq\frac{n}{p}+1$. We define  the Campanato space $C_{p,q}^\lambda=C_{p,q}^\lambda({\Bbb \R^n})$ as the set of measurable functions $f\in L_p^{\textrm{loc}}(\R^n)$
such that
	\begin{equation}\label{d3}
		\| f\|_{C_{p,q}^\lambda} =
	| f|_{C_{p,q}^\lambda}+\sup_{x\in\Bbb R^n}\|f\|_{L_p(B_1(x))}<\infty,
	\end{equation}
where
\[
	| f|_{C_{p,q}^\lambda}:=\bigg(\int_0^\infty \bigg(r^{-\lambda}\sup_{x\in \R^n} \bigg(\int_{B_r(x)}|f(y)-A_rf(x)|^p\, dy \bigg)^\frac{1}{p}\bigg)^q\frac{dr}{r}\bigg)^\frac{1}{q},
	\]
with the obvious modification if $q=\infty$ (then we write $C_{p,\infty}^\lambda=C_{p}^\lambda$).
In the case $q<\infty$, these spaces were first introduced in \cite{Sta}.

We will see in Propositions~\ref{lem6} and \ref{lem8}  that for $1\leq p<\infty$, $\| f\|_{C_{p,q}^\lambda}\asymp \| f\|_{M_{p,q}^\lambda}$ whenever $0\leq\lambda<\frac{n}{p}$ under some natural conditions on $f$, and
$\| f\|_{C_{p,q}^\lambda}\asymp \| f\|_{B_{\infty,q}^{\lambda-\frac np}}
$ whenever $\frac{n}{p}<\lambda<\frac{n}{p}+1$, where $B_{\infty,q}^{\lambda-\frac np}$ is the Besov space (see Subsection~\ref{SECbesov} below for the precise definitions).

\subsection{Weighted Morrey and Campanato-type spaces}
To define weighted spaces, we replace  $r^{-\lambda}$ in the definitions of
Morrey and Campanato spaces
by a general decreasing function satisfying some integrability conditions. For $0<p<\infty$,  $0<q\leq \infty$, and $k>0$, we denote by  $\Xi^k_{p,q}$ the class of nonnegative weight functions  $u$ on $\R_+=(0,\infty)$ satisfying
\begin{equation}
	\label{EQxiclass}
	\big\| r^{\frac{k}{p}-\frac{1}{q}}u(r)\big\|_{L_q(0,\varepsilon)} <\infty,\qquad  \big\|r^{-\frac{1}{q}}u(r)\big\|_{{L_q(\varepsilon,\infty)}}<\infty,
\end{equation}
for every fixed $\varepsilon>0$. If $q=\infty$, we simply denote $\Xi_{p,\infty}^k=\Xi_{p}^k$. For $u\in \Xi_{p,q}^n$, the weighted Morrey-type space $M^u_{p,q}=M^u_{p,q}({\Bbb \R^n})$ is then defined by
\[
\| f\|_{M^u_{p,q}} =\begin{cases}
	\displaystyle\bigg(\int_0^\infty \Big( u(r)\sup_{x\in \R^n}\| f\|_{L_p (B_r(x))}\Big)^q\, \frac{dr}{r}\bigg)^\frac{1}{q},&\text{if }q<\infty,\\
	\displaystyle\sup_{r>0}u(r)   \sup_{x\in \R^n}\| f\|_{L_p (B_r(x))},&\text{if }q=\infty.
\end{cases}
\]
It is seen at once that if $u(r)=r^{-\lambda}$, then $M^u_{p,q}=M^\lambda_{p,q}$. Moreover, the assumption $u\in \Xi_{p,q}^n$ is essential, since if $u$ is a decreasing weight not belonging to this class, then the corresponding space $M^u_{p,q}$ is trivial (see \cite[Lemma 4.1]{Bu1} and the references therein; we remark that the definition of weighted Morrey space is slightly different in that paper).

 Let $0<p<\infty$, $0<q\leq \infty$, and  $u\in \Xi_{p,q}^{n+p}$. The weighted Campanato space $C_{p,q}^u=C_{p,q}^u(\R^n)$ is defined by
\[
\| f\|_{C_{p,q}^u} =\bigg(\int_0^\infty \bigg(u(r)\sup_{x\in \R^n} \bigg(\int_{B_r(x)}|f(y)-A_rf(x)|^p\, dy \bigg)^\frac{1}{p}\bigg)^q\frac{dr}{r}\bigg)^\frac{1}{q}+\sup_{x\in\Bbb R^n}\|f\|_{L_p(B_1(x))},
\]
with the usual modification for
$q=\infty$. Here the integral defines the  associated seminorm $|f|_{C_{p,q}^u}$. The assumption $u\in \Xi_{p,q}^{n+p}$ is natural to work with nontrivial Campanato spaces (see Remark~\ref{REMcampanatotrivial} below).

\subsection{Besov spaces}\label{SECbesov}
 Let $1\leq p\leq\infty$, $0<q\leq \infty$, and $0<\alpha < 1$. Define the Besov space as
 	\[
 	B_{p,q}^\alpha= 	B_{p,q}^\alpha(\Bbb R^n)=\bigg\{f\in L_p(\Bbb R^n):\| f\|_{B_{p,q}^\alpha}:=\bigg( \int_0^1( t^{-\alpha} \omega(f,t)_p)^q\, \frac{dt}t\bigg)^\frac{1}{q}+\|f\|_{L_p(\Bbb R^n)}<\infty\bigg\}
 	\]
	(with the obvious modification if $q=\infty$). Here $\omega(f,\cdot)_p$ denotes the $p$-modulus of continuity  of $f\in L_p(\Bbb R^n)$ given by
 	\[
 	\omega(f,t)_p=\sup_{|y|\leq t}\|f(\cdot+y)- f(\cdot)\|_{L_p}.
 	\]
	Note that $B_{\infty,\infty}^\alpha=\text{Lip}\, \alpha$.
It is well known that
the Fourier-analytically defined Besov norm is equivalent to $\| f\|_{B_{p,q}^\alpha}$, see \cite{triebelII}.
By $|f|_{B_{p,q}^\alpha}$ we denote the seminorm
 	$
 	|f|_{B_{p,q}^\alpha}:= \big( \int_0^1( t^{-\alpha} \omega(f,t)_p)^q\,  \frac{dt}t\big)^\frac{1}{q}.
 	$

\section{Properties of Morrey and Campanato-type spaces}\label{SECfunctionspacesprop}

 \subsection{Hardy's inequalities}
We begin with the following version of discrete Hardy's inequalities, which will be systematically used in the sequel. The proof can be given modifying \textnormal{\cite[Lemma 2.5]{PST}}.

\begin{lemma}
	\label{LEMhardyseries}
	Let $0< p\le\infty$ and $\{a_n\}_{n\in \Z}$, $\{b_n\}_{n\in \Z}$ be nonnegative sequences.
	\begin{enumerate}
		\item If $\sum_{k=-\infty}^n b_k\lesssim b_n$ for every $n\in \Z$, then
		\[
		\sum_{n\in \Z} b_n^p a_n^p \leq \sum_{n\in \Z}  \bigg(b_n \sum_{k=n}^\infty a_k\bigg)^p\lesssim\sum_{n\in \Z} b_n^p a_n^p.
		\]
		\item If $\sum_{k=n}^\infty b_k \lesssim b_n$ for every $n\in \Z$, then
		\[
		\sum_{n\in \Z} b_n^p a_n^p \leq \sum_{n\in \Z}  \bigg( b_n\sum_{k=-\infty}^n a_k\bigg)^p\lesssim\sum_{n\in \Z} b_n^p a_n^p.
		\]
	\end{enumerate}
\end{lemma}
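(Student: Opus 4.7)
The plan is to prove only the upper bound in each part, since the lower bound is immediate: for nonnegative sequences $a_n \le \sum_{k \ge n} a_k$ and $a_n \le \sum_{k \le n} a_k$, so raising to the $p$-th power and summing in $n$ yields the left-hand inequalities. Moreover, I would handle only (1); assertion (2) then follows at once by applying (1) to the reversed sequences $a_{-n}, b_{-n}$, under which the two hypotheses swap.

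The crucial first step is to upgrade the hypothesis in (1) into a genuine geometric statement on $\{b_n\}$. Setting $S_n = \sum_{k \le n} b_k$, the assumption $S_n \lesssim b_n = S_n - S_{n-1}$ gives $S_{n-1} \le \rho\, S_n$ for some $\rho \in [0,1)$, and hence $b_{n-j} \le S_{n-j} \le C \rho^j b_n$ for every $j \ge 0$. From this geometric bound one immediately deduces, for every $q > 0$, the partial-sum and tail estimates
\[
\sum_{k \le n} b_k^q \lesssim b_n^q, \qquad \sum_{k \ge n} b_k^{-q} \lesssim b_n^{-q},
\]
and these are the only facts about $\{b_n\}$ the remainder of the proof invokes.

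For the upper bound I would split on $p$. When $0 < p \le 1$, the $p$-subadditivity $\big(\sum a_k\big)^p \le \sum a_k^p$ and a change in the order of summation give
\[
\sum_n b_n^p \bigg(\sum_{k \ge n} a_k\bigg)^p \le \sum_k a_k^p \sum_{n \le k} b_n^p \lesssim \sum_k a_k^p b_k^p,
\]
by the partial-sum estimate with $q = p$. When $1 < p < \infty$, I would apply H\"older's inequality with a fixed $\alpha \in (0,1)$ to the splitting $a_k = (a_k b_k^\alpha)(b_k^{-\alpha})$ to obtain
\[
\sum_{k \ge n} a_k \lesssim b_n^{-\alpha} \bigg(\sum_{k \ge n} a_k^p b_k^{\alpha p}\bigg)^{1/p},
\]
thanks to the tail estimate with $q = \alpha p'$. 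Raising to the $p$-th power, multiplying by $b_n^p$, summing in $n$ and swapping the order of summation then produces $\sum_k a_k^p b_k^{\alpha p} \sum_{n \le k} b_n^{p(1-\alpha)}$, and the partial-sum estimate with $q = p(1-\alpha)$ collapses the inner sum to $b_k^{p(1-\alpha)}$, giving the desired $\sum_k a_k^p b_k^p$. The case $p = \infty$ is direct: with $M = \sup_k a_k b_k$, one has $b_n \sum_{k \ge n} a_k \le M\, b_n \sum_{k \ge n} b_k^{-1} \lesssim M$.

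The only delicate step I would take care with is the H\"older argument in the range $1 < p < \infty$: the parameter $\alpha \in (0,1)$ must be chosen so that both the tail and the partial-sum bounds on $\{b_n\}$ are applied with positive exponents. The freedom in $\alpha$ already indicates that the true content lies in the geometric estimate extracted from the hypothesis at the outset, and the rest of the proof is bookkeeping.
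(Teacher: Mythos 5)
Your proof is correct: the key observation that the hypothesis forces geometric decay $b_{n-j}\lesssim \rho^{j}b_n$, followed by $p$-subadditivity for $0<p\le 1$ and H\"older's inequality with the splitting $a_k=(a_kb_k^{\alpha})b_k^{-\alpha}$ for $1<p<\infty$, plus the reduction of part (2) to part (1) by reflecting the index, is exactly the standard argument for such weighted discrete Hardy inequalities. The paper gives no proof beyond the remark that one can modify \cite[Lemma 2.5]{PST}, and your self-contained argument is precisely the type of proof that reference supplies, so there is nothing substantive to compare beyond noting your version is complete and correct.
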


 \subsection{Basic facts about Morrey and Campanato spaces}
  We begin with two  simple embedding results.
 \begin{lemma}\label{LEMmorrey-lorentzembedding}
 	Let $0<p<\infty$, $0<\lambda\leq \frac{n}{p}$, and $\frac{1}{q}=\frac{1}{p}-\frac{\lambda}{n}$. There holds $L_q\hookrightarrow L_{q,\infty}\hookrightarrow M^\lambda_p$.
 \end{lemma}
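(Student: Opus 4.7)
The plan is to prove the two embeddings separately, with the first being an immediate consequence of Chebyshev's inequality and the second following from rearrangement-based estimates.

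For the first embedding $L_q\hookrightarrow L_{q,\infty}$, I would simply invoke Chebyshev: if $f\in L_q$, then $t^q|\{|f|>t\}|\le \|f\|_{L_q}^q$, so $\|f\|_{L_{q,\infty}}\le \|f\|_{L_q}$. This is completely standard and would take at most one line in the write-up.

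For the second embedding $L_{q,\infty}\hookrightarrow M_p^\lambda$, I would aim to bound $\|f\|_{L_p(B_r(x))}$ uniformly in $x$ and with the correct $r^{\lambda}$ scaling. The key observation is the rearrangement inequality
\[
\|f\|_{L_p(B_r(x))}^p=\int_{B_r(x)}|f(y)|^p\,dy\le \int_0^{|B_r|} f^*(t)^p\,dt,
\]
combined with the defining estimate $f^*(t)\le \|f\|_{L_{q,\infty}}\,t^{-1/q}$. Plugging the latter into the former yields
\[
\|f\|_{L_p(B_r(x))}^p\le \|f\|_{L_{q,\infty}}^p\int_0^{|B_r|} t^{-p/q}\,dt,
\]
and here the hypothesis $\lambda>0$ is precisely what guarantees $p/q<1$, so the integral converges and equals a constant times $|B_r|^{1-p/q}\asymp r^{n(1-p/q)}=r^{\lambda p}$. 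Taking $p$-th roots and multiplying by $r^{-\lambda}$ gives the uniform bound $r^{-\lambda}\|f\|_{L_p(B_r(x))}\lesssim \|f\|_{L_{q,\infty}}$, as required.

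The endpoint case $\lambda=n/p$ (equivalently $q=\infty$) must be handled separately since the rearrangement computation above becomes degenerate; there one simply uses $\|f\|_{L_p(B_r(x))}\le \|f\|_{L_\infty}|B_r|^{1/p}\lesssim \|f\|_{L_\infty} r^{n/p}$. There is no real obstacle here: the only subtlety worth flagging is that the condition $\lambda>0$ is used in an essential way to make the integral $\int_0^{|B_r|}t^{-p/q}\,dt$ finite, which is the reason the lemma excludes $\lambda=0$.
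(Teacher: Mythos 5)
Your argument is correct and is essentially the paper's own proof: the rearrangement bound $\|f\|_{L_p(B_r(x))}\le\big(\int_0^{|B_r|}f^*(t)^p\,dt\big)^{1/p}$ combined with $f^*(t)\le t^{-1/q}\|f\|_{L_{q,\infty}}$ and the convergence of $\int_0^{|B_r|}t^{-p/q}\,dt$ (which uses $\lambda>0$, i.e.\ $p<q$) is exactly the computation in the paper, with the first embedding being the standard Chebyshev estimate. The only cosmetic difference is your separate treatment of $\lambda=n/p$: there $q=\infty$ and $p/q=0$, so the same integral computation actually goes through without degeneracy (with $\|f\|_{L_{\infty,\infty}}=\|f\|_{L_\infty}$), though your direct $L_\infty$ bound is of course also fine.
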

 \begin{proof}
    Let $f^*$ be the decreasing rearrangement of $f$. Denote by $|B_r|$ the volume of any ball of radius $r$ on $\R^n$. Since  $\lambda>0$, then $p<q$, and hence for any $r>0$ and $x\in \R^n$ we have
 	\begin{align*}
 		r^{-\lambda} \| f\|_{L_p(B_r(x))} &\leq  r^{-\lambda} \bigg( \int_0^{|B_r|} f^*(t)^p\, dt\bigg)^\frac{1}{p} \leq  r^{-\lambda}\sup_{s>0} s^\frac{1}{q} f^*(s) \bigg( \int_0^{|B_r|} t^{-\frac{p}{q}}\, dt\bigg)^\frac{1}{p}\\
 		& \asymp r^{-\lambda+ n\big( \frac{1}{p}-\frac{1}{q}\big)} \| f\|_{L_{q,\infty(\R^n)}} = \| f\|_{L_{q,\infty(\R^n)}}.\qedhere
 	\end{align*}
 \end{proof}

 \begin{lemma}\label{LEMembeddingsmorrey}
 	Let $X\in\{M,LM,C\}$.
 	\begin{enumerate}
 		\item Let $0< p_0<p_1<\infty$ and $0<q\leq \infty$. Then,
 		\begin{align*}
 			X^{\lambda_1}_{p_1,q}&\hookrightarrow X^{\lambda_0}_{p_0,q},
 		\end{align*}
 		whenever $\lambda_1-\frac{n}{p_1}= \lambda_0-\frac{n}{p_0}\geq 0$.
 		\item Let $0<q_0<q_1\leq \infty$. Then,
 		\begin{align*}
 			X^{\lambda}_{p,q_0}&\hookrightarrow X^{\lambda}_{p,q_1}.
 		\end{align*}
 	\end{enumerate}
Similar results hold for $X^{\lambda}_{p,q}=T^{\lambda}_{q}L_p$. \end{lemma}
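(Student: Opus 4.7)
The plan is to reduce part (1) to a ball-by-ball (or annulus-by-annulus) application of Hölder's inequality, and part (2) to the standard $\ell^{q_0}\hookrightarrow\ell^{q_1}$ nesting after a dyadic discretization of the $r$-integral.

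For part (1), I would fix a ball $B_r(x)$ and apply Hölder's inequality with exponents $p_1/p_0$ and its conjugate to obtain
\[
\|g\|_{L_{p_0}(B_r(x))}\le |B_r|^{1/p_0-1/p_1}\|g\|_{L_{p_1}(B_r(x))}\asymp r^{n(1/p_0-1/p_1)}\|g\|_{L_{p_1}(B_r(x))},
\]
where $g=f$ in the $M$ and $LM$ cases and $g=f-A_rf(\xi)$ in the Campanato case (this substitution is legal because $A_rf(\xi)$ is constant on $B_r(\xi)$). The hypothesis $\lambda_1-n/p_1=\lambda_0-n/p_0$ rewrites as $-\lambda_0+n(1/p_0-1/p_1)=-\lambda_1$, so multiplying by $r^{-\lambda_0}$ yields $r^{-\lambda_0}\|g\|_{L_{p_0}(B_r(x))}\lesssim r^{-\lambda_1}\|g\|_{L_{p_1}(B_r(x))}$. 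Taking the supremum in $x$ (or $\xi$) and then the $L_q((0,\infty),dr/r)$ norm preserves the inequality, giving the embedding for $M$, $LM$, and the semi-norm part of $C$; the extra term $\sup_x\|f\|_{L_p(B_1(x))}$ in $\|\cdot\|_{C^\lambda_{p,q}}$ is handled by the same estimate at $r=1$. For $T^\lambda_qL_p$ the identical argument on each dyadic annulus $B_{2^{k+1}}(0)\setminus B_{2^k}(0)$ works, with the sign-adjusted condition $\lambda_1+n/p_1=\lambda_0+n/p_0$.

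For part (2), I would discretize. Setting $G(r):=\sup_x\|f\|_{L_p(B_r(x))}$ (respectively $\|f\|_{L_p(B_r(0))}$ in the $LM$ case), $G$ is nondecreasing in $r$. Combining this monotonicity with the power-law behaviour of $r^{-\lambda}$, a standard computation shows that
\[
\|f\|_{X^\lambda_{p,q}}\asymp\bigl\|(2^{-\lambda k}G(2^k))_{k\in\Z}\bigr\|_{\ell^q},
\]
and the standard $\ell^{q_0}\hookrightarrow\ell^{q_1}$ nesting (with the sup convention when $q_1=\infty$) then delivers the embedding. For $T^\lambda_qL_p$ the same $\ell^q$-nesting applies directly since its norm is already defined as an $\ell^q$ sum.

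The one step that really requires care is the Campanato case of part (2), where the mean oscillation $G_C(r,x)=\bigl(\int_{B_r(x)}|f(y)-A_rf(x)|^p\,dy\bigr)^{1/p}$ is not manifestly monotone in $r$. I would establish the almost-monotonicity $G_C(r,x)\lesssim G_C(s,x)$ for $r\le s$ as follows: since $A_s(f-A_sf(x))(x)=0$, an application of Hölder's inequality on $B_r(x)$ controls $|A_rf(x)-A_sf(x)|\cdot|B_r|^{1/p}$ by $G_C(s,x)$, and the triangle inequality then gives $G_C(r,x)\le 2G_C(s,x)$. With this in hand the dyadic discretization goes through verbatim (the $q$-independent term $\sup_x\|f\|_{L_p(B_1(x))}$ in $\|\cdot\|_{C^\lambda_{p,q}}$ appears identically on both sides). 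This almost-monotonicity is the main technical point; the remaining bookkeeping is routine.
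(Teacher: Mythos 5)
Your proposal is correct and follows essentially the same route the paper indicates: H\"older's inequality ball-by-ball (or annulus-by-annulus) for part (1), and dyadic discretization of the $r$-integral followed by the $\ell^{q_0}\hookrightarrow\ell^{q_1}$ nesting for part (2), which is exactly the combination of Lemma~\ref{lem3.7} and Jensen's inequality cited in the paper. Your almost-monotonicity argument for the Campanato oscillation is the same device the paper packages as the characterization via $\inf_c\|f-c\|_{L_p(B_r(x))}$ in \eqref{EQequivcampanatoinf} and Lemma~\ref{lem5} (and, as there, it implicitly requires $p\geq 1$), so no genuinely different ideas are involved.
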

 The proof is immediate: the first part follows from  H\"older's inequality and   the second part  from Lemma~\ref{lem3.7} below and Jensen's inequality.
Note that, formally, the second part of Lemma \ref{LEMembeddingsmorrey} is valid for arbitrary $\lambda$, although in some cases only trivial spaces are considered.

Lemma ~\ref{LEMembeddingsmorrey} in particular yields that
for $0<p,q<\infty$ and  $\lambda >\frac{n}{p}+1$ or $\lambda<0$, the space $C_{p,q}^\lambda$ consists only of constant functions; cf. Remark \ref{REMcampanatotrivial}.
	Indeed, we have  $C_{p,q}^\lambda  \hookrightarrow C_p^\lambda$ and the space $C_p^\lambda$ consists only of constant functions for $\lambda >\frac{n}{p}+1$ or $\lambda<0$.

We continue with a characterization of local Morrey spaces.

\begin{lemma} \label{1}
	Let $0< p < \infty$, $0<q\leq \infty$,  and $0 < \lambda < \infty$. Then $\DLM=LM_{p, q}^{\lambda}$.
\end{lemma}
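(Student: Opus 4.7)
The plan is to discretize the integral defining the $LM^{\lambda}_{p,q}$ norm into dyadic shells in $r$, then exploit the disjoint annular decomposition of balls to reduce the equivalence to a one-sided Hardy-type inequality covered by Lemma~\ref{LEMhardyseries}.

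First, I would split $\int_0^\infty$ in the definition of $\|f\|_{LM^\lambda_{p,q}}$ into pieces over dyadic intervals $r\in[2^k,2^{k+1})$, $k\in\Z$. On each such interval, $r^{-\lambda}\asymp 2^{-\lambda k}$ and $\|f\|_{L_p(B_r(0))}$ is monotone in $r$ with
\[
\|f\|_{L_p(B_{2^k}(0))}\le \|f\|_{L_p(B_r(0))}\le \|f\|_{L_p(B_{2^{k+1}}(0))}.
\]
Together with $\int_{2^k}^{2^{k+1}}\frac{dr}{r}=\ln 2$, this yields the discretization
\[
\|f\|_{LM^\lambda_{p,q}}^q\asymp \sum_{k\in\Z} 2^{-\lambda k q}\|f\|_{L_p(B_{2^k}(0))}^q,
\]
with the obvious supremum version when $q=\infty$.

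Next, I would decompose each ball as a disjoint union of dyadic annuli, $B_{2^k}(0)=\bigsqcup_{j<k}\bigl(B_{2^{j+1}}(0)\setminus B_{2^j}(0)\bigr)$, and set $a_j:=\|f\|_{L_p(B_{2^{j+1}}(0)\setminus B_{2^j}(0))}$, so that $\|f\|_{L_p(B_{2^k}(0))}^p=\sum_{j<k} a_j^p$. The theorem then reduces to establishing
\[
\sum_{k\in\Z} 2^{-\lambda k q}\Bigl(\sum_{j<k} a_j^p\Bigr)^{q/p}\asymp \sum_{k\in\Z} 2^{-\lambda k q} a_k^q.
\]
The lower bound is immediate by keeping only the term $j=k-1$ (and noting $2^{-\lambda k}\asymp 2^{-\lambda(k-1)}$). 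For the upper bound I would invoke Lemma~\ref{LEMhardyseries}(2) applied with the sequences $\widetilde{a}_j:=a_j^p$, $\widetilde{b}_n:=2^{-\lambda n p}$ and exponent $\widetilde{p}:=q/p$; the summability hypothesis $\sum_{k\geq n}2^{-\lambda k p}\lesssim 2^{-\lambda n p}$ holds precisely because $\lambda>0$ and $p>0$, and raising both sides to the power $1/p$ and reindexing delivers exactly the desired inequality. The case $q=\infty$ is handled analogously: setting $M:=\sup_k 2^{-\lambda k}a_k$ one has $a_j\leq M 2^{\lambda j}$, and summing the geometric series $\sum_{j<k}2^{\lambda jp}\lesssim 2^{\lambda k p}$ gives $2^{-\lambda k}\|f\|_{L_p(B_{2^k}(0))}\lesssim M$.

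No step presents a substantial obstacle; the only point that deserves care is that the exponent $\widetilde{p}=q/p$ in the Hardy inequality may be less than one, which is why the formulation of Lemma~\ref{LEMhardyseries} for all $0<p\le\infty$ is needed. The hypothesis $\lambda>0$ is used exactly to guarantee convergence of $\sum_{m\geq 1}2^{-\lambda m p}$, so the argument breaks down at the endpoint $\lambda=0$, consistent with the fact that $LM^0_{p,q}$ is trivial for $q<\infty$ while $T^0_q L_p$ is not.
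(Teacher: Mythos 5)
Your proposal is correct and follows essentially the same route as the paper: discretize the $LM^{\lambda}_{p,q}$ norm over dyadic radii (this is exactly \eqref{EQlm} of Lemma~\ref{lem3.7}), decompose each ball $B_{2^k}(0)$ into the annuli $B_{2^{j+1}}(0)\setminus B_{2^j}(0)$, $j<k$, and close the nontrivial direction with the discrete Hardy inequality of Lemma~\ref{LEMhardyseries}(2) for $q<\infty$ and a geometric series for $q=\infty$, the reverse embedding being immediate. The only cosmetic differences are that you re-derive the discretization directly from monotonicity rather than citing Lemma~\ref{lem3.7}, and that at the end one takes the $1/q$ (not $1/p$) power of the summed inequality to pass to the norms.
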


\begin{proof}
	On the one hand, by Lemma~\ref{lem3.7} below it is clear that $\| f\|_{\DLM}\lesssim \|f\|_{LM_{p,q}^{\lambda}}$ for any $0<q\leq \infty$. On the other hand, we have
	\[
	\|f\|_{L_p (B_{2^k}(0))}=\bigg(\sum\limits_{m=-\infty}^{k-1}\int_{B_{2^{k+1}}(0)\backslash B_{2^k}(0)} |f(y)|^p\, dy\bigg)^\frac{1}{p}.
	\]
	Consequently, if $q<\infty$,
	\begin{align*}
	\|f\|_{LM_{p,q}^{\lambda}}^q&\asymp  \sum_{k\in \Z}  2^{-k\lambda q} \bigg( \int_{B_{2^k}(0)}|f(y)|^p\, dy \bigg)^{\frac qp}\\& \leq \sum_{k\in \mathbb{Z}}  2^{-k\lambda q}\bigg(\sum\limits_{m=-\infty}^{k-1}\int_{B_{2^{k+1}}(0)\backslash B_{2^k}(0)} |f(y)|^p\, dy\bigg)^{\frac qp}.
	\end{align*}
	Hence, Lemma \ref{LEMhardyseries} and \eqref{EQlm}
     yield
	\[
	\|f\|_{LM_{p,q}^{\lambda}}\lesssim \|f\|_{\DLM}.
	\]
	For $q=\infty$, we have
	\begin{equation*}
	\| f\|_{LM_{p,\infty}^\lambda}\lesssim \sup_{k\in \Z} 2^{-\lambda k }\bigg( \sum_{m=-\infty}^{k-1}2^{\lambda mp}\bigg)^\frac{1}{p}\| f\|_{\DLMinfty} \asymp  \| f\|_{\DLMinfty}.
	\end{equation*}
\end{proof}

\begin{lemma}\label{lem5}
Let $1\leq p<\infty$, $0\leq \lambda\leq \frac{n}{p}+1$, and $0<q\leq\infty$. Then,
\[
| f|_{C_{p,q}^\lambda}
\asymp \bigg( \int_0^\infty\bigg(r^{-\lambda}\sup_{x\in\Bbb R^n}\inf_{c}\|f-c\|_{L_{p}(B_r(x))}\bigg)^q\frac {dr}r\bigg)^{\frac1q}.
\]
\end{lemma}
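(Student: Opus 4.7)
The plan is to establish the equivalence via two one-sided inequalities. The direction $\gtrsim$ is immediate: since $A_r f(x)$ is one admissible choice of constant, we have the pointwise bound $\inf_{c}\|f - c\|_{L_p(B_r(x))}\leq \|f - A_r f(x)\|_{L_p(B_r(x))}$ for every $r>0$ and $x\in\R^n$. Multiplying by $r^{-\lambda}$, taking the supremum in $x$, and then applying the $L_q(dr/r)$-norm (or the supremum in $r$ when $q=\infty$) preserves the estimate, so the right-hand side of the lemma is controlled from above by $|f|_{C_{p,q}^\lambda}$.

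For the reverse inequality $\lesssim$, the key step is the pointwise comparison
\[
\|f - A_r f(x)\|_{L_p(B_r(x))} \leq 2\,\|f - c\|_{L_p(B_r(x))}
\]
valid for every constant $c$. To prove it, I apply the triangle inequality to write $\|f - A_r f(x)\|_{L_p(B_r(x))} \leq \|f - c\|_{L_p(B_r(x))} + |c - A_r f(x)|\,|B_r|^{1/p}$, and then bound the second summand using $c - A_r f(x) = \frac{1}{|B_r|}\int_{B_r(x)}(c - f(y))\,dy$. Since $p\geq 1$, H\"older's inequality yields $|c - A_r f(x)|\leq |B_r|^{-1/p}\|f-c\|_{L_p(B_r(x))}$, which produces the factor of $2$. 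Taking the infimum over $c$ first, then $\sup_{x\in\R^n}$, multiplying by $r^{-\lambda}$, and finally applying the $L_q(dr/r)$-norm gives the desired upper bound on $|f|_{C_{p,q}^\lambda}$.

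There is essentially no obstacle here: the proof boils down to the standard fact that for $p\geq 1$ the average is a near-optimal constant approximant in $L^p$, with a universal factor independent of $r$, $x$, $\lambda$, $q$ and the dimension. The only point requiring mild care is that the estimate $\|f - A_r f(x)\|_{L_p(B_r(x))}\leq 2\inf_{c}\|f - c\|_{L_p(B_r(x))}$ holds pointwise in $x$ with the same constant, so taking $\sup_{x}$ on both sides before integrating against $r^{-\lambda q}\, dr/r$ is harmless and the case $q=\infty$ is handled identically by replacing the integral with the supremum in $r$.
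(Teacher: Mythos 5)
Your argument is correct and is essentially the paper's own proof: the paper reduces the lemma to exactly the two-sided estimate $\inf_{c}\|f-c\|_{L_p(B_r(x))}\leq \|f-A_rf(x)\|_{L_p(B_r(x))}\leq 2\inf_{c}\|f-c\|_{L_p(B_r(x))}$ (its equation \eqref{EQequivcampanatoinf}), which you derive via the same triangle-inequality-plus-H\"older computation showing the average is a near-optimal constant for $p\geq 1$. You merely spell out the details the paper calls straightforward.
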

	The proof is straightforward by observing that
	\begin{equation}
		\label{EQequivcampanatoinf}
			\inf_{c} \|f-c\|_{L_p(B_{r}(x))}\leq \|f- A_{r}f(x)\|_{L_p(B_{r}(x))}\leq  2\inf_{c} \|f-c\|_{L_p(B_{r}(x))},
	\end{equation}
	for any $r>0$.

Next we study  a discretization of the (quasi- or semi-)norms in Morrey and Campanato-type spaces.

\begin{lemma}\label{lem3.7}Let   $0< p<\infty$ and  $0<q\leq\infty$.
	 If $0\leq \lambda< \infty,$ then
	\begin{equation}\label{gg-}
		\|f\|_{M^{\lambda}_{p,q}}\asymp\bigg( \sum_{k\in \mathbb{Z}} \bigg( 2^{-k\lambda} \sup_{x\in \Bbb R^n} \|f\|_{L_{p}(B_{2^k}(x))} \bigg)^q \bigg)^{\frac{1}{q}},
	\end{equation}
	\begin{equation}\label{EQlm}
		\|f\|_{LM^{\lambda}_{p,q}}\asymp\bigg( \sum_{k\in \mathbb{Z}} \bigg( 2^{-k\lambda} \|f\|_{L_{p}(B_{2^k}(0))} \bigg)^q \bigg)^{\frac{1}{q}},
	\end{equation}
	and, for $1\le p<\infty$,
	\begin{equation}\label{gg}
		|f|_{C^{\lambda}_{p,q}}\asymp\bigg( \sum_{k\in \mathbb{Z}} \left( 2^{-k\lambda} \sup_{x\in \Bbb R^n} \|f-A_{2^k}f(x)\|_{L_{p}(B_{2^k}(x))} \right)^q \bigg)^{\frac{1}{q}},
	\end{equation}
with the obvious modification if $q=\infty$.
\end{lemma}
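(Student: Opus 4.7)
The plan is to discretize each norm by splitting $(0,\infty)=\bigsqcup_{k\in\Z}[2^k,2^{k+1})$ and collapsing the inner factor on each dyadic interval via its monotonicity in $r$; since $\int_{2^k}^{2^{k+1}}\frac{dr}{r}=\log 2$ is independent of $k$, the resulting upper and lower bounds differ only by multiplicative constants, yielding the claimed equivalences.

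For \eqref{gg-} and \eqref{EQlm}, the key observation is that the maps $r\mapsto \sup_{x\in\R^n}\|f\|_{L_p(B_r(x))}$ and $r\mapsto \|f\|_{L_p(B_r(0))}$ are nondecreasing, while $r\mapsto r^{-\lambda}$ is nonincreasing (using $\lambda\geq 0$). Hence on each interval $[2^k,2^{k+1})$ the integrand $(r^{-\lambda}F(r))^q$, where $F$ denotes either of these two monotone quantities, is bounded above by $(2^{-k\lambda}F(2^{k+1}))^q$ and below by $(2^{-(k+1)\lambda}F(2^k))^q$. Integrating against $dr/r$, summing over $k\in\Z$, and reindexing the upper bound by one unit produce the two-sided comparison with $\sum_k (2^{-k\lambda}F(2^k))^q$. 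The case $q=\infty$ is even simpler: the supremum over $r>0$ coincides with the supremum over the dyadic samples $r=2^k$ up to the factor $2^\lambda$ by the same monotonicity.

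The Campanato case \eqref{gg} is the main obstacle: the quantity $\sup_x\|f-A_rf(x)\|_{L_p(B_r(x))}$ need not be monotone in $r$, because the subtracted mean $A_rf(x)$ itself depends on $r$. The remedy is to invoke Lemma~\ref{lem5} first, replacing $|f|_{C^\lambda_{p,q}}$ by the equivalent expression involving $G(r):=\sup_{x}\inf_{c}\|f-c\|_{L_p(B_r(x))}$. Now $G$ is nondecreasing in $r$: for $r_1<r_2$ and $c^*$ approximately achieving the infimum at radius $r_2$,
\[
\inf_c\|f-c\|_{L_p(B_{r_1}(x))}\leq\|f-c^*\|_{L_p(B_{r_1}(x))}\leq\|f-c^*\|_{L_p(B_{r_2}(x))},
\]
and taking supremum over $x$ preserves monotonicity. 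The dyadic argument of the previous paragraph then applies to $G$ verbatim and yields the discrete equivalent with $\sup_x\inf_c\|f-c\|_{L_p(B_{2^k}(x))}$. A final application of the two-sided estimate \eqref{EQequivcampanatoinf} at each scale $r=2^k$ restores the form involving $A_{2^k}f(x)$, completing \eqref{gg}.
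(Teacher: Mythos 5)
Your proof is correct and follows essentially the same route as the paper: dyadic discretization of the integral over $r$ via monotonicity of the inner quantity, and, for the Campanato seminorm, passing to the equivalent expression with $\sup_x\inf_c\|f-c\|_{L_p(B_r(x))}$ (Lemma~\ref{lem5}), using its monotonicity in $r$, and returning to the averaged form through \eqref{EQequivcampanatoinf} at $r=2^k$. The only cosmetic difference is that for \eqref{gg-} and \eqref{EQlm} the paper refers the reverse bounds to the discrete Hardy inequality, whereas you obtain them directly by monotonicity and reindexing, which is equally valid.
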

\begin{proof}
	Firstly, equivalences \eqref{gg-} and \eqref{EQlm} are clear. Indeed, the inequalities $\lesssim$ are trivial, and so are the reverse inequalities for $q=\infty$. For $q<\infty$, the reverse inequalities follow from Lemma~\ref{LEMhardyseries}.

Secondly, by Lemma~\ref{lem5}, if $q<\infty$,
	\[
	|f|_{C_{p,q}^{\lambda}}\asymp \left( \int_0^\infty\left(r^{-\lambda}\sup_{x\in\Bbb R^n}\inf_{c}\|f-c\|_{L_{p}(B_r(x))}\right)^q\frac {dr}r\right)^{\frac1q}.
	\]
	By the monotonicity of  $\displaystyle\sup_{x\in\Bbb R^n}\inf_{c\in\Bbb R}\|f-c\|_{L_{p}(B_r(x))}$ with respect to $r$,
  we obtain
	\[
	|f|_{C_{p,q}^{\lambda}}\asymp \bigg( \sum_{k\in \mathbb{Z}} \bigg( 2^{-k\lambda} \sup_{x\in \Bbb R^n}\inf_{c} \|f-c\|_{L_{p}(B_{2^k}(x))} \bigg)^q \bigg)^{\frac{1}{q}}.
	\]
	Using \eqref{EQequivcampanatoinf} with $r=2^k$, we arrive at \eqref{gg}. The case $q=\infty$ is treated similarly.
\end{proof}

We will need the following notation. Recall that $Q_{k}^m$ is the cube
$
Q_{k}^m = [0,2^m)^n+2^m k,
$
where $k\in \Z^n$ and $m\in \Z$.
Then, for fixed $m$, one has
\[
\bigcup_{k\in \Z^n}Q_{k}^m = \R^n,
\]
and
\[
\big|Q_{k}^m \cap Q_{k'}^m\big|=\begin{cases}
	2^{nm} ,&\text{if } k=k',\\
	0,&\text{otherwise}.
\end{cases}
\]
We say that  a cube $Q$ is of order $m\in \Z$ if $Q= Q^m_{k}$ for some $k.$ The set of all cubes of order $m$ is denoted by $G^m$. Note also that for $r\geq m$, each cube of order $r$ can be subdivided into $2^{n(r-m)}$ disjoint cubes of order $m$.

\begin{remark}\label{REMcubes}
In the  definition of the  Morrey space $M^\lambda_{p,q}$ one can equivalently replace integration over balls $B_r(x)$ by integration over cubes $Q_r(x)$ centered at $x$ with side length $2r$. Moreover, there holds
	\begin{align*}
		\| f\|_{M^\lambda_{p,q}} &\asymp \bigg(\sum_{m\in \Z}\Big(2^{-\lambda m} \sup_{Q\in G^m}\| f\|_{L_p(Q)} \Big)^q\bigg)^\frac{1}{q}.
	\end{align*}
For weighted Morrey-type spaces $M_{p,q}^u$, the same observation is valid whenever $u(r)\asymp u(2r)$ for $r>0$, that is,
\[
\| f\|_{M_{p,q}^u}\asymp \bigg(\int_0^\infty \Big(u(r)\sup_{x\in \R}\|f\|_{L_p(Q_r(x))}\Big)^q\, \frac{dr}{r} \bigg)^\frac{1}{q}\asymp \bigg(\sum_{m\in\Z} \Big(u(2^m)\sup_{Q\in G^m}\| f\|_{L_p(Q)} \Big)^q\bigg)^\frac{1}{q}.
\]
\end{remark}

One more observation concerning weighted Campanato spaces is in order.
\begin{remark}\label{REMcampanatotrivial}
	We note that in the definition of the Campanato space $C_{p,q}^u$, in the case $p\geq 1$, the condition $u\in \Xi_{p,q}^{n+p}$ is necessary for the corresponding space to be nontrivial (i.e., not to contain only constant functions).
	
	Assume that the first condition in \eqref{EQxiclass} does not hold, i.e., for some $\varepsilon>0$ we have $\big\| u(r) r^{\frac{n}{p}+1-\frac{1}{q}}\big\|_{L_q(0,\varepsilon)}=\infty$. In this case, if $f\in C_{p,q}^u$, then
 \[
 \liminf_{r\to 0} \sup_{x\in \R^n}r^{-\frac{n}{p}-1}\| f-A_{r}f(x)\|_{L_p(B_{r}(x))}=0.
 \]
 Then we follow the proof\footnote{The proof of \cite[Theorem 1.1]{Slavin} was given for $p=2$ but it can be extended to any $p\ge1$.} of Theorem 1.1 in \cite{Slavin} to obtain that $f$ is a  constant.

 On the other hand, assume that the second condition of \eqref{EQxiclass} does not hold. Since $\|f-A_{r}f(x)\|_{L_{p}(B_r(x))}\geq \inf_c\| f-c\|_{L_p(B_r(x))}$ and the latter is increasing in $r$, we get that
	$
	\|f-A_{r}f(x)\|_{L_{p}(B_r(x))}>M_f>0,
	$
	for sufficiently large $r$, unless $f$ is a constant. Thus, if $f$ is not  a constant, then $|f|_{C_{p,q}^u}\geq M_f \big\| r^{-\frac{1}{q}} u(r)\big\|_{L_q(\varepsilon,\infty)}=\infty$ for $\varepsilon$ sufficiently large.	
\end{remark}

\subsection{When  the Campanato space is either Morrey or Besov}
As mentioned in \eqref{EQcampanatocharact}, the classical Campanato space $C_p^\lambda$
 extends the notion of functions of bounded mean oscillation as well as  the Morrey and  H\"{o}lder  spaces.
 It is natural to ask whether a similar extension holds for $C_{p,q}^{\lambda}$ spaces. The answer to this question is given in Propositions \ref{lem6} and \ref{lem8}.
\begin{proposition}\label{lem6}
	Let $1\leq p<\infty$, $0< \lambda< \frac{n}{p}$, and $0<q\leq\infty$.
If a measurable function $f$ is such that,
 for every $x\in \R^n$,
	\begin{equation}\label{d3-}
		\lim_{r\to+\infty}\frac1{|B_r|}\int_{B_r(x)}f(y) \,  dy=0,
	\end{equation}
then
 		\[
 		\|f\|_{M_{p,q}^{\lambda}}\asymp\|f\|_{C_{p,q}^{\lambda}}.
 		\]

\end{proposition}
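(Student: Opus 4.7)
The plan is to exploit the discretizations from Lemma~\ref{lem3.7} for both norms, together with the vanishing-average assumption \eqref{d3-} to pass between $\|f\|_{L_p(B_{2^k}(x))}$ and $\|f-A_{2^k}f(x)\|_{L_p(B_{2^k}(x))}$. The easy direction $\|f\|_{C_{p,q}^\lambda}\lesssim \|f\|_{M_{p,q}^\lambda}$ follows from
\[
\|f-A_rf(x)\|_{L_p(B_r(x))}\le 2\|f\|_{L_p(B_r(x))},
\]
which is a consequence of the H\"older bound $|A_rf(x)|\le |B_r|^{-1/p}\|f\|_{L_p(B_r(x))}$ (valid since $p\ge 1$); this handles the seminorm, while the embedding $M_{p,q}^\lambda\hookrightarrow M_{p,\infty}^\lambda$ from Lemma~\ref{LEMembeddingsmorrey}(2) bounds the additional term $\sup_x\|f\|_{L_p(B_1(x))}$. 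Note that this direction does not use \eqref{d3-}.

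For the reverse inequality I would use the triangle inequality
\[
\sup_x\|f\|_{L_p(B_{2^k}(x))}\le \sup_x\|f-A_{2^k}f(x)\|_{L_p(B_{2^k}(x))}+2^{kn/p}\sup_x|A_{2^k}f(x)|,
\]
and control the pointwise averages $A_{2^k}f(x)$ through the telescoping identity
\[
A_{2^k}f(x)=-\sum_{j=k}^\infty\big(A_{2^{j+1}}f(x)-A_{2^j}f(x)\big),
\]
whose convergence is guaranteed precisely by hypothesis \eqref{d3-}. Combined with the standard bound
\[
|A_{2^{j+1}}f(x)-A_{2^j}f(x)|\lesssim |B_{2^j}|^{-1/p}\|f-A_{2^{j+1}}f(x)\|_{L_p(B_{2^{j+1}}(x))},
\]
obtained by applying H\"older inside the smaller ball against the mean of the larger one, this yields
\[
2^{kn/p}\sup_x|A_{2^k}f(x)|\lesssim \sum_{j\ge k}2^{(k-j)n/p}\,\sup_x\|f-A_{2^{j+1}}f(x)\|_{L_p(B_{2^{j+1}}(x))}.
\]

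Multiplying by $2^{-k\lambda}$ transforms the kernel into $2^{(k-j)(n/p-\lambda)}\mathbf{1}_{j\ge k}$, which is absolutely summable since $\lambda<n/p$. The resulting discrete convolution is then bounded in $\ell^q$ for any $0<q\le\infty$ by Lemma~\ref{LEMhardyseries}(1) (via a shift of indices), and the discretization \eqref{gg} identifies the right-hand side as $|f|_{C_{p,q}^\lambda}$, so the estimate \eqref{gg-} completes the proof. The essential technical step, and the only one that genuinely uses both hypotheses, is the telescoping identity: without \eqref{d3-} the series $\sum_{j\ge k}(A_{2^{j+1}}f-A_{2^j}f)$ need not sum to $-A_{2^k}f(x)$, and the strict inequality $\lambda<n/p$ is what makes the geometric kernel summable. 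Minor modifications (replacing sums by suprema) handle $q=\infty$.
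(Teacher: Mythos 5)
Your argument is correct and follows essentially the same route as the paper: discretize both norms via Lemma~\ref{lem3.7}, telescope $A_{2^k}f(x)$ using \eqref{d3-}, control consecutive averages by a H\"older estimate of type \eqref{d7}, and sum the resulting one-sided geometric kernel with Hardy's inequality (Lemma~\ref{LEMhardyseries}), exactly where $\lambda<\frac{n}{p}$ enters. The only differences (order of applying the average-difference bound versus Hardy, and the direct $\|f-A_rf(x)\|_{L_p}\le 2\|f\|_{L_p}$ estimate instead of Lemma~\ref{lem5} for the easy direction) are cosmetic.
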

\begin{proof}
On the one hand, Lemma  \ref{lem5} yields
  $
  \|f\|_{C_{p,q}^{\lambda}}\lesssim  \|f\|_{M_{p,q}^{\lambda}}$ for any $q$. On the other hand, for $f\in C_{p,q}^{\lambda}$ and $q<\infty$, we have, by Lemma~\ref{lem3.7},
 		\begin{align*}
 		\| f\|_{M_{p,q}^\lambda}& \asymp 	\displaystyle\bigg(\sum_{k\in \Z} \bigg(2^{-\lambda k}\sup_{x\in \R^n} \|f\|_{L_p(B_{2^k}(x))}\bigg)^q\bigg)^\frac{1}{q}\\
 		& \lesssim \displaystyle\bigg(\sum_{k\in \Z} \bigg(2^{-\lambda k}\sup_{x\in \R^n} \|f-A_{2^k}f(x)\|_{L_p(B_{2^k}(x))}+2^{ (\frac{n}p-\lambda)k}\sup_{x\in \R^n} |A_{2^k}f(x)|\bigg)^q\bigg)^\frac{1}{q}\\
		& \leq  |f|_{C_{p,q}^{\lambda}}+\displaystyle\bigg(\sum_{k\in \Z} \bigg(2^{(\frac np-\lambda )k}\sup_{x\in \R^n} |A_{2^{k}}f(x)| \bigg)^q\bigg)^\frac{1}{q}.
 		\end{align*}
 		Condition \eqref{d3-} implies
 		\[
 		\sup_{x\in \R^n} |A_{2^k}f(x)|=\sup_{x\in \R^n}\bigg|\sum_{m=k}^\infty A_{2^m}f(x)- A_{2^{m+1}}f(x)\bigg|\le \sum_{m=k}^\infty \sup_{x\in \R^n}|A_{2^m}f(x)- A_{2^{m+1}}f(x)|.
 		\]
 		By Hardy's inequality (Lemma~\ref{LEMhardyseries}),
 		\begin{align*}
 		\|f\|_{M_{p,q}^\lambda}&\lesssim|f|_{C_{p,q}^{\lambda}}+\displaystyle\bigg(\sum_{k\in \Z} \bigg(2^{(\frac np-\lambda )k}\sum_{m=k}^\infty \sup_{x\in \R^n}|A_{2^m}f(x)- A_{2^{m+1}}f(x)| \bigg)^q\bigg)^\frac{1}{q}\\
 		&\leq |f|_{C_{p,q}^{\lambda}}+\bigg(\sum_{k\in \Z} \bigg(2^{(\frac np-\lambda )k}\sup_{x\in \R^n}| A_{2^{k}}f(x) -A_{2^{k+1}}f(x)| \bigg)^q\bigg)^\frac{1}{q}.
 		\end{align*}
 		Further,
 		\begin{align}
 		| A_{2^{k}}f(x) -A_{2^{k+1}}f(x)|&=\bigg(\frac1{|B_{2^{k}}|}\int_{B_{2^{k}}(x)}| A_{2^{k}}f(x) -A_{2^{k+1}}f(x)|^p\, dy\bigg)^{\frac1p}\nonumber \\
 		&\lesssim 2^{-\frac {nk}p}\big(\|f-A_{2^{k}}f(x)\|_{L_p(B_{2^{k}}(x))}+
 		\|f-A_{2^{k+1}}f(x)\|_{L_p(B_{2^{k+1}}(x))}\big).\label{d7}
 		\end{align}
 		Combining the above estimates, we get
 		\[
 		\|f\|_{M_{p,q}^{\lambda}}\lesssim  |f|_{C_{p,q}^{\lambda}}.
 		\]
		The case $q=\infty$ is treated similarly.
 	\end{proof}
 		
 	Our next result studies the interpolation properties of the Campanato spaces $C_{p,q}^\lambda$. We will use it later to characterize Campanato norms in the case $\frac{n}{p}<\lambda<\frac{n}{p}+1$ in terms of the Besov norms.
 	\begin{lemma}\label{T1}
 		Let  $0< p <\infty$, $\theta \in (0,1)$, and $0<q\leq\infty$. Let also $0<\lambda_0 <\lambda_1\le \frac np+1$. Then,
 		\begin{equation*}\label{12}
 			\big( C_{p,\infty}^{\lambda_0},  C_{p,\infty}^{\lambda_1} \big)_{\theta,q} \hookrightarrow   C_{p,q}^{\lambda},
 		\end{equation*}
 		where $\lambda =(1-\theta)\lambda_0 + \theta \lambda_1.$
 	\end{lemma}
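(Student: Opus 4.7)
The plan is to combine the dyadic characterization of the Campanato seminorm from Lemma~\ref{lem3.7} with Peetre's $K$-functional description of real interpolation. Recall that for any couple $(A_0,A_1)$ one has $\|f\|_{(A_0,A_1)_{\theta,q}}^q = \int_0^\infty (t^{-\theta}K(t,f))^q\,\frac{dt}{t}$, and by a standard change of variables together with the fact that $K(\cdot,f)$ is nondecreasing while $t^{-1}K(t,f)$ is nonincreasing, this integral is equivalent to $\sum_{k\in\Z}(t_k^{-\theta}K(t_k,f))^q$ for \emph{any} geometric sequence $t_k = a^k$ with $a>1$. We will apply this with $a=2^{\lambda_1-\lambda_0}$, which is the natural scale given the target space.

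\textbf{Step 1: discretize the seminorm.} By \eqref{gg} of Lemma~\ref{lem3.7} together with the two-sided bound \eqref{EQequivcampanatoinf}, one has
\[
|f|_{C_{p,q}^{\lambda}}\asymp\bigg(\sum_{k\in\Z}\Bigl(2^{-k\lambda}\sup_{x\in\R^n}\inf_{c}\|f-c\|_{L_p(B_{2^k}(x))}\Bigr)^q\bigg)^{1/q},
\]
with the usual modification when $q=\infty$.

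\textbf{Step 2: split via an arbitrary decomposition.} Fix $f = f_0 + f_1$ with $f_j \in C_{p,\infty}^{\lambda_j}$. Since the infimum over constants is subadditive and $\inf_c\|f_j-c\|_{L_p(B_r(x))}\leq r^{\lambda_j}|f_j|_{C_p^{\lambda_j}}\leq r^{\lambda_j}\|f_j\|_{C_p^{\lambda_j}}$, for each ball $B_{2^k}(x)$ we obtain
\[
\inf_c\|f-c\|_{L_p(B_{2^k}(x))}\leq 2^{k\lambda_0}\|f_0\|_{C_p^{\lambda_0}}+2^{k\lambda_1}\|f_1\|_{C_p^{\lambda_1}} = 2^{k\lambda_0}\bigl(\|f_0\|_{C_p^{\lambda_0}}+t_k\|f_1\|_{C_p^{\lambda_1}}\bigr),
\]
with $t_k := 2^{k(\lambda_1-\lambda_0)}$. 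Taking the infimum over all admissible decompositions gives $\sup_x\inf_c\|f-c\|_{L_p(B_{2^k}(x))}\lesssim 2^{k\lambda_0}K(t_k,f;C_p^{\lambda_0},C_p^{\lambda_1})$.

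\textbf{Step 3: recognize the interpolation norm.} The identity $\lambda-\lambda_0=\theta(\lambda_1-\lambda_0)$ rewrites the prefactor as $2^{-k\lambda}\cdot 2^{k\lambda_0} = 2^{k(\lambda_0-\lambda)} = t_k^{-\theta}$. Combining with Step~1,
\[
|f|_{C_{p,q}^{\lambda}}\lesssim \bigg(\sum_{k\in\Z}\bigl(t_k^{-\theta}K(t_k,f)\bigr)^q\bigg)^{1/q}\asymp \|f\|_{(C_p^{\lambda_0},C_p^{\lambda_1})_{\theta,q}},
\]
where the last equivalence is the change-of-base remark above. For the inhomogeneous piece $\sup_{x}\|f\|_{L_p(B_1(x))}$ in the definition \eqref{d3} of $\|f\|_{C_{p,q}^\lambda}$, the triangle inequality and the inclusion $\|\cdot\|_{L_p(B_1(x))}\leq \|\cdot\|_{C_p^{\lambda_j}}$ yield $\sup_{x}\|f\|_{L_p(B_1(x))}\leq K(1,f;C_p^{\lambda_0},C_p^{\lambda_1})$, which in turn is controlled by $\|f\|_{(C_p^{\lambda_0},C_p^{\lambda_1})_{\theta,q}}$ by the classical bound $K(1,f)\lesssim \|f\|_{(A_0,A_1)_{\theta,q}}$.

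The main obstacle is bookkeeping rather than conceptual: one must justify that the change of discretization from the natural base $2$ (coming from the dyadic shells in Lemma~\ref{lem3.7}) to the base $2^{\lambda_1-\lambda_0}$ (coming from matching the scaling exponents $\lambda_0,\lambda_1$) does not lose constants in the $K$-functional integral, and one has to handle the full quasinorm of $C_{p,q}^\lambda$ by accounting separately for the anchor term $\sup_x\|f\|_{L_p(B_1(x))}$. The case $q=\infty$ is identical with sums replaced by suprema throughout.
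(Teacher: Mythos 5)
Your argument is correct and is essentially the paper's proof: both decompose $f=f_0+f_1$, use the scaling $r^{-\lambda}=r^{-\theta(\lambda_1-\lambda_0)}\,r^{-\lambda_0}$ to bound the oscillation on $B_r(x)$ by $r^{-\theta(\lambda_1-\lambda_0)}K\big(r^{\lambda_1-\lambda_0},f;C^{\lambda_0}_{p,\infty},C^{\lambda_1}_{p,\infty}\big)$, and then identify the interpolation norm (plus the same trivial treatment of the anchor term), the only cosmetic difference being that you discretize via Lemma~\ref{lem3.7} and the best-constant formulation, while the paper works with the continuous variable $r$ and the linearity of $A_r$. One small caveat: the equivalence \eqref{EQequivcampanatoinf} and Lemma~\ref{lem5} are justified in the paper only for $p\ge 1$, so for $0<p<1$ you should bypass the $\inf_c$ step and argue directly with $A_r(f_0+f_1)=A_rf_0+A_rf_1$ and the $L_p$ quasi-triangle inequality, exactly as the paper does.
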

 	
 	For $q=\infty$ and $\lambda_i<\frac np$, see \cite[Lemma 2.87 (iii)]{si}.
 	
 	\begin{proof}[Proof of Lemma~\ref{T1}]
Let $x\in \Bbb R^n$ and $f\in \big(C^{\lambda_0}_{p,\infty},C^{\lambda_1}_{p,\infty}\big)_{\theta,q}$.
Using the representation $f=f_0+f_1$ with $f_0\in C^{\lambda_0}_{p,\infty}$ and $f_1 \in C^{\lambda_1}_{p,\infty}$, we write, for $r>0$,
\begin{align*}
	&\phantom{=}r^{-\lambda}\|f-A_{r}f(x)\|_{L_{p}(B_{r}(x))}
	\\
	&\lesssim r^{-(\lambda-\lambda_0)} \Big( r^{-\lambda_0 }\|f_0-A_{r}f_0(x)\|_{L_{p}(B_{r}(x))}+ r^{\lambda_1-\lambda_0}r^{-\lambda_1}\|f_1-A_{r}f_1(x)\|_{L_{p}(B_{r}(x))}\Big)
	\\
	&\lesssim r^{-\theta(\lambda_1-\lambda_0)} \Big(\|f_0\|_{C^{\lambda_0}_{p,\infty}} +r^{\lambda_1-\lambda_0}\|f_1\|_{C^{\lambda_1}_{p,\infty}} \Big).
\end{align*}	
This implies  that for any $r>0$,
\[
\sup_{x\in \Bbb R^n}r^{-\lambda }\|f-A_{r}f(x)\|_{L_{p}(B_{r}(x))}\lesssim
r^{-\theta(\lambda_1-\lambda_0)} K\big(r^{\lambda_1-\lambda_0}, f; C^{\lambda_0}_{p,\infty},  C^{\lambda_1}_{p,\infty}\big),
\]
where $K\big(t, f; X, Y\big)$ denotes the $K$-functional of $f$ for a couple $(X,\,Y)$.
Hence,
\[
|f|_{C^{\lambda}_{p,q}}
\lesssim \bigg(\int_0^\infty \Big( r^{-\theta(\lambda_1-\lambda_0)}K\big(r^{\lambda_1-\lambda_0}, f; C^{\lambda_0}_{p,\infty},  C^{\lambda_1}_{p,\infty}\big)\Big)^q\frac{dr}{r} \bigg)^\frac{1}{q}\asymp  \|f\|_{\big( C^{\lambda_0}_{p,\infty},C^{\lambda_1}_{p,\infty}\big)_{\theta,q}}.
\]
Since $\sup_{x\in \Bbb R^n}\|f\|_{L_p(B_1(x))}\lesssim  \|f\|_{\big( C^{\lambda_0}_{p,\infty},C^{\lambda_1}_{p,\infty}\big)_{\theta,q}}$,
the conclusion follows.
\end{proof}
 	
 	Now we study the relationship between Campanato and Besov spaces. Peetre mentioned in \cite{peetre} (without the proof) that   	
 $
C_{p,q}^{\lambda}= 		B_{\infty,q}^{\alpha}
 $   with $\alpha=\lambda-\frac np$. In the next result we obtain a norm equivalence for these spaces.

 	\begin{proposition}
 		\label{lem8}
 		Let $1\leq p<\infty$, $0<q\leq\infty$, and $\frac{n}{p}< \lambda< \frac{n}{p}+1$.  Then we have
 		\[
 		\|f\|_{C_{p,q}^{\lambda}} \asymp \|f\|_{B_{\infty,q}^{\alpha}},
\qquad   \alpha=\lambda-\frac np.
 		\]

 	\end{proposition}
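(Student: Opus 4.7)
The plan is to establish the two-sided norm equivalence by proving both continuous embeddings $B_{\infty,q}^{\alpha}\hookrightarrow C_{p,q}^{\lambda}$ and $C_{p,q}^{\lambda}\hookrightarrow B_{\infty,q}^{\alpha}$ directly, without invoking interpolation. The three main ingredients will be the discretization \eqref{gg} of the Campanato seminorm (Lemma~\ref{lem3.7}), the equivalent description of $|f|_{C_{p,q}^\lambda}$ via $\inf_c\|f-c\|_{L_p(B_r(x))}$ (Lemma~\ref{lem5}), and the discrete Hardy inequality (Lemma~\ref{LEMhardyseries}).

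For the easy embedding $B_{\infty,q}^{\alpha}\hookrightarrow C_{p,q}^{\lambda}$, given $f\in B_{\infty,q}^{\alpha}$ (which has a continuous representative), I would take $c=f(x)$ in the infimum to obtain, for every ball $B_r(x)$,
\[
\inf_{c\in\R}\|f-c\|_{L_p(B_r(x))}\le\|f-f(x)\|_{L_p(B_r(x))}\lesssim r^{n/p}\omega(f,r)_\infty.
\]
Since $\alpha=\lambda-\tfrac{n}{p}>0$, dividing by $r^\lambda$ and integrating in $r$ with Lemma~\ref{lem5} yields $|f|_{C_{p,q}^\lambda}\lesssim |f|_{B_{\infty,q}^\alpha}+\|f\|_{L_\infty}$: the contribution from $r\ge 1$ is controlled by $\|f\|_{L_\infty}\cdot \bigl(\int_1^\infty r^{-\alpha q}\tfrac{dr}{r}\bigr)^{1/q}<\infty$ precisely because $\alpha>0$. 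Since $\sup_{x}\|f\|_{L_p(B_1(x))}\lesssim\|f\|_{L_\infty}$, this gives $\|f\|_{C_{p,q}^\lambda}\lesssim\|f\|_{B_{\infty,q}^\alpha}$.

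The harder direction $C_{p,q}^\lambda\hookrightarrow B_{\infty,q}^\alpha$ rests on a pointwise telescoping of dyadic averages. I would introduce
\[
c_m:=2^{-m\lambda}\sup_{z\in\R^n}\|f-A_{2^m}f(z)\|_{L_p(B_{2^m}(z))},\qquad m\in\Z,
\]
so that $\|(c_m)\|_{\ell^q(\Z)}\asymp|f|_{C_{p,q}^\lambda}$ by Lemma~\ref{lem3.7}. H\"older's inequality applied to $\frac{1}{|B_{s/2}|}\int_{B_{s/2}(x)}|f(y)-A_sf(x)|\,dy$ gives $|A_{s/2}f(x)-A_sf(x)|\lesssim s^{\alpha}c_{\lfloor\log_2 s\rfloor}$ uniformly in $x$; combined with Lebesgue's differentiation theorem, this yields both the existence of a continuous representative of $f$ and the telescoping bound
\[
|f(x)-A_rf(x)|\lesssim\sum_{j\ge 0}(r\,2^{-j})^\alpha c_{\lfloor\log_2 r\rfloor-j}.
\]
For $|x-y|\le t\le 1$ and $r\asymp t$, with $z=(x+y)/2$ so that $B_r(x),B_r(y)\subset B_{2r}(z)$, the same H\"older estimate controls the cross term $|A_rf(x)-A_rf(y)|\lesssim r^\alpha c_{\lfloor\log_2 r\rfloor+1}$. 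Writing
\[
|f(x)-f(y)|\le|f(x)-A_rf(x)|+|A_rf(x)-A_rf(y)|+|A_rf(y)-f(y)|
\]
and collecting contributions at all scales $\le r$, one obtains $\omega(f,2^{-k})_\infty\lesssim\sum_{\ell\ge k-1}2^{-\ell\alpha}c_{-\ell}$. Multiplying by $2^{k\alpha}$ reduces the Besov seminorm to a geometric convolution of $(c_m)$, and the $\ell^q$-bound then follows from Lemma~\ref{LEMhardyseries} (or the Minkowski/$q$-triangle inequality applied to $\sum_j 2^{-j\alpha}\cdot$). The $L_\infty$ piece of $\|f\|_{B_{\infty,q}^\alpha}$ is obtained from $|f(x)|\le|A_1f(x)|+|f(x)-A_1f(x)|$: H\"older on the first term controls it by $\sup_x\|f\|_{L_p(B_1(x))}$, while the second is bounded uniformly in $x$ by the telescoping above.

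I expect the main obstacle to lie in the hard direction. Two points require particular care: first, the argument that $f\in C_{p,q}^\lambda$ is a.e.\ equal to a continuous function (which is what legitimizes the pointwise manipulations $f(x)-f(y)$); and second, the handling of the cross term $|A_rf(x)-A_rf(y)|$, where one must enclose both balls in one centered at $z=(x+y)/2$ and then correctly track the one-step index shift when feeding the estimates into the discrete Hardy bookkeeping on $(c_m)_{m\in\Z}$.
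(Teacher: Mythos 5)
Your proof is correct, and only half of it coincides with the paper's. The hard embedding $C_{p,q}^{\lambda}\hookrightarrow B_{\infty,q}^{\alpha}$ follows essentially the paper's own argument: discretization via Lemma~\ref{lem3.7}, telescoping of dyadic averages justified by the Lebesgue differentiation theorem, the H\"older-type bound on differences of consecutive averages as in \eqref{d7}, and Hardy's inequality (Lemma~\ref{LEMhardyseries}) to close the $\ell^q$ bookkeeping; your only deviation there is cosmetic — you control the cross term $|A_rf(x)-A_rf(y)|$ by enclosing both balls in $B_{2r}\big(\tfrac{x+y}{2}\big)$ and comparing each average with $A_{2r}f\big(\tfrac{x+y}{2}\big)$, whereas the paper integrates over the intersection $D_{k,\delta}(x)$ of the two balls; both yield $\lesssim r^{\alpha}c_{\lfloor\log_2 r\rfloor+1}$, and both arguments rely equally on passing to the continuous representative, which you rightly flag. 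The genuinely different route is the easy embedding $B_{\infty,q}^{\alpha}\hookrightarrow C_{p,q}^{\lambda}$: the paper treats only $q=\infty$ (quoting $|f|_{C_{p,\infty}^{\lambda}}\lesssim|f|_{\textnormal{Lip}\,\alpha}$ from the literature) and transfers it to all $q$ by real interpolation, combining Lemma~\ref{T1} with the identity $\big(B_{\infty,\infty}^{\alpha_0},B_{\infty,\infty}^{\alpha_1}\big)_{\theta,q}=B_{\infty,q}^{\alpha}$, while you prove it directly for every $q$ from $\inf_c\|f-c\|_{L_p(B_r(x))}\le\|f-f(x)\|_{L_p(B_r(x))}\lesssim r^{n/p}\min\{\omega(f,r)_\infty,\,2\|f\|_{L_\infty}\}$ together with Lemma~\ref{lem5}, splitting the $r$-integral at $r=1$ and using $\alpha>0$ to absorb the tail into $\|f\|_{L_\infty}$. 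Your version is more elementary and self-contained: it needs no interpolation machinery, no external citation, and applies verbatim for $0<q<1$ without any discussion of quasi-normed interpolation couples. What the paper's route buys instead is brevity at this point and an extra payoff from Lemma~\ref{T1}, which it proves anyway and reuses for the interpolation corollary on Campanato spaces; your direct argument leaves Lemma~\ref{T1} unused but changes nothing else in the paper.
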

 	\begin{proof}
 In the case $q=\infty$ it is easy to see  that
 $|f|_{C_{p,\infty}^{\lambda}}
 \lesssim
 |f|_{B_{\infty,\infty}^\alpha}=|f|_{\textnormal{Lip}\, \alpha}$ (see, e.g., \cite[Theorem 5.7.1 (ii)]{PKJF}).
Thus, $\|f\|_{C_{p,\infty}^{\lambda}}
\lesssim
\|f\|_{B_{\infty,\infty}^\alpha}
$
for $\frac{n}{p}< \lambda\le \frac{n}{p}+1$. Taking this into account, as well as the interpolation properties of the Campanato (see Lemma \ref{T1}) and Besov spaces, we obtain the embedding
	\[
B_{\infty,q}^{\alpha}=	\left( B_{\infty,\infty}^{\alpha_0},  B_{\infty,\infty}^{\alpha_1} \right)_{\theta,q}
\hookrightarrow
\big( C_{p,\infty}^{\lambda_0},  C_{p,\infty}^{\lambda_1} \big)_{\theta,q}\hookrightarrow C_{p,q}^{\lambda}.
	\]
Moreover,
$\|f\|_{C_{p,q}^{\lambda}}
\lesssim
\|f\|_{B_{\infty,q}^\alpha}
$
with  $\frac{n}{p}< \lambda< \frac{n}{p}+1$.

	To prove the reverse embedding, let $f\in C_{p,q}^{\lambda}$. We note that
	\[
	|f|_{B_{\infty,q}^\alpha} \asymp \bigg( \sum_{k=-\infty}^0 \big(2^{-k\alpha}\omega(f,2^k)_\infty\big)^q\bigg)^\frac{1}{q}.
	\]
	By the Lebesgue differentiation theorem, we have
	$
	\lim\limits_{m\to -\infty}A_{2^m}f(x)=f(x)
	$ for a.e. $x.$
	Then, for $k\in \Z$, we can write
	$
	A_{2^k}f(x)-f(x)=\sum_{m=-\infty}^kA_{2^m}f(x)-A_{2^{m-1}}f(x).
	$
	Thus,
	\begin{multline*}
	\omega(f,2^k)_\infty=\sup_{\substack{0<|\delta|\leq 2^k\\x\in\Bbb R^n}}
	|f(x+\delta)-f(x)|
\\
	\leq \sup_{\substack{0<|\delta|\leq 2^k\\x\in\Bbb R^n}}\big
	(|f(x+\delta)-A_{2^{k+1}}f(x+\delta)| + |f(x)-A_{2^{k+1}}f(x)|+|A_{2^{k+1}}f(x+\delta)-A_{2^{k+1}}f(x)|\big)
\\
	\leq 2 \sum_{m=-\infty}^{k+1}\sup_{x\in\Bbb R^n}|A_{2^m}f(x)-A_{2^{m-1}}f(x)|+\sup_{\substack{0<|\delta|\leq 2^k\\x\in\Bbb R^n}}|A_{2^{k+1}}f(x+\delta)-A_{2^{k+1}}f(x)|.
	\end{multline*}
	Denote $D_{k,\delta}(x)=B_{2^{k+1}}(x+\delta)\cap B_{2^{k+1}}(x)$. Since for $|\delta| \leq 2^k$ we have $2^{kn}\lesssim |D_{k,\delta}(x)|$, then
	\begin{align*}
	&\phantom{=}|A_{2^{k+1}}f(x+\delta)-A_{2^{k+1}}f(x)|=\bigg(\frac1{|D_{k,\delta}(x)|}\int_{D_{k,\delta}(x)}
	|A_{2^{k+1}}f(x+\delta)-A_{2^{k+1}}f(x)|^p\,dy\bigg)^{\frac1p}\\
	&\leq 2^{-\frac{nk}p}\bigg(\int_{D_{k,\delta}(x)}
	|A_{2^{k+1}}f(x+\delta)-f(y)|^p\,dy\bigg)^{\frac1p}+2^{-\frac{nk}p}\bigg(\int_{D_{k,\delta}(x)}
	|f(y)-A_{2^{k+1}}f(x)|^p\,dy\bigg)^{\frac1p}\\
	&	\leq 2^{-\frac{nk}p}\Big(\|f-A_{2^{k+1}}f(x+\delta)\|_{L_p(B_{2^{k+1}}(x+\delta))}+
	\|f-A_{2^{k+1}}f(x)\|_{L_p(B_{2^{k+1}}(x))}\Big)\\
	&\le  2^{1-\frac{nk}p}\sup_{x\in\Bbb R^n}
	\|f-A_{2^{k+1}}f(x)\|_{L_p(B_{2^{k+1}}(x))}.
	\end{align*}
	Using  \eqref{d7}, we have
	\begin{align*}
	\omega(f,2^k)_\infty&\lesssim \sum_{m=-\infty}^{k+1}2^{-\frac {nm}p}\sup_{x\in\Bbb R^n}\Big(\|f-A_{2^{m}}f(x)\|_{L_p(B_{2^{m}}(x))}+
	\|f-A_{2^{m-1}}f(x)\|_{L_p(B_{2^{m-1}}(x))}\Big)\\
	&+2^{-\frac{nk}p}\sup_{x\in\Bbb R^n}
	\|f-A_{2^{k+1}}f(x)\|_{L_p(B_{2^{k+1}}(x))}\asymp \sum_{m=-\infty}^{k+1}2^{-\frac {nm}p}\|f-A_{2^{m}}f(x)\|_{L_p(B_{2^{m}}(x))}.
	\end{align*}
	Lemmas~\ref{LEMhardyseries}~and~\ref{lem3.7} imply
	\begin{align}\nonumber
		|f|_{B_{\infty,q}^{\alpha}}&\asymp \bigg( \sum_{k=-\infty}^0 \big(2^{-k\alpha}\omega(f,2^k)_\infty \big)^q\bigg)^{\frac1q}
		\\
		&\lesssim \bigg( \sum_{k\in \mathbb{Z}} \Big( 2^{k(-\frac np-\alpha)}\sup_{x\in\Bbb R^n}
		\|f-A_{2^{k}}f(x)\|_{L_p(B_{2^{k}}(x))} \Big)^q\bigg)^{\frac1q}\asymp |f|_{C_{p,q}^{\lambda}}.
\label{ggg}
	\end{align}
	Since
	$
	\lim\limits_{m\to -\infty}A_{2^m}f(x)=f(x),
	$
	we also have
	\[
	|f(x)|\leq\sum_{m=-\infty}^{0}|A_{2^m}f(x)-A_{2^{m-1}}f(x)| +|A_1f(x)|\lesssim \| f\|_{C_{p,q}^\lambda}\bigg(\sum_{m=-\infty}^0 2^{(\lambda-\frac np)q'm}\bigg)^{\frac1{q'}}.
	\]
	Thus, we obtain
$\|f\|_{B_{\infty,q}^\alpha}
\lesssim
\|f\|_{C_{p,q}^{\lambda}},
$ concluding the proof.
  	\end{proof}

 \begin{remark}
 		Let $1\leq p<\infty$, $0<q\leq\infty$, and $\frac{n}{p}< \lambda< \frac{n}{p}+1$.  From the proof of the previous result we see that
$
 |f|_{C_{p,q}^\lambda}+\|f\|_ {L_\infty(\R^n)}$
is an  equivalent norm in  ${C_{p,q}^\lambda}$.

 	\end{remark}

 	Proposition \ref{lem8} together with interpolation properties of Besov spaces imply the following result, which
 is in  sharp contrast with the interpolation properties of  Morrey spaces.

 	\begin{corollary}
 		Let
 $1\leq p_0,  p_1<\infty$,
 $0<q_0, q_1\leq\infty$, $\frac{n}{p_0}<\lambda_0<\frac{n}{p_0}+1$, and $\frac{n}{p_1}<\lambda_1<\frac{n}{p_1}+1$ be such that $\lambda_0<\lambda_1$. For
 		$\theta \in (0,1)$, we have
 		\begin{equation*}\label{12}
 			\left( C_{p_0,q_0}^{\lambda_0},  C_{p_1,q_1}^{\lambda_1} \right)_{\theta,q} =  C_{p,q}^{\lambda},
 		\end{equation*}
 		where
 		$\lambda =(1-\theta)\lambda_0 + \theta \lambda_1$, $\frac 1p=\frac {1-\theta}{p_0} +\frac {\theta}{p_1}$, and $\frac 1q=\frac {1-\theta}{q_0} +\frac {\theta}{q_1}$.
 	\end{corollary}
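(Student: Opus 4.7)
The plan is to reduce this corollary to the real interpolation formula for Besov spaces, via the Campanato--Besov norm equivalence of Proposition~\ref{lem8}.

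First, I would check that Proposition~\ref{lem8} applies to all three Campanato spaces involved. For $C_{p_0,q_0}^{\lambda_0}$ and $C_{p_1,q_1}^{\lambda_1}$ this is immediate from the hypotheses. For the target space $C_{p,q}^{\lambda}$, set $\alpha_i := \lambda_i - n/p_i \in (0,1)$ for $i=0,1$ and $\alpha := \lambda - n/p$. The convexity identities $\lambda = (1-\theta)\lambda_0 + \theta\lambda_1$ and $1/p = (1-\theta)/p_0 + \theta/p_1$ yield
\[
\alpha = (1-\theta)\alpha_0 + \theta\alpha_1 \in (0,1),
\]
so $n/p < \lambda < n/p+1$, and moreover $1\leq p <\infty$. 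Hence Proposition~\ref{lem8} delivers the norm equivalences
\[
C_{p_i,q_i}^{\lambda_i} = B_{\infty,q_i}^{\alpha_i}\quad(i=0,1), \qquad C_{p,q}^{\lambda} = B_{\infty,q}^{\alpha}.
\]

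The next step is to invoke the classical real interpolation identity for Besov spaces with fixed first index $\infty$: when $\alpha_0 \neq \alpha_1$,
\[
(B_{\infty,q_0}^{\alpha_0},\, B_{\infty,q_1}^{\alpha_1})_{\theta,q} = B_{\infty,q}^{\alpha}
\]
holds for every $q \in (0,\infty]$; when $\alpha_0 = \alpha_1 = \alpha$, the companion identity $(B_{\infty,q_0}^{\alpha}, B_{\infty,q_1}^{\alpha})_{\theta,q} = B_{\infty,q}^{\alpha}$ with $1/q = (1-\theta)/q_0 + \theta/q_1$ applies. Both statements are standard (see, e.g., Triebel's monographs). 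Combining with the three equivalences above yields the desired conclusion.

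The only delicate point is to notice that $\alpha_0$ and $\alpha_1$ may coincide even though $\lambda_0 < \lambda_1$ (this happens precisely when $p_0 \neq p_1$ and $\lambda_0 - n/p_0 = \lambda_1 - n/p_1$), and therefore one has to cite the correct sub-case of the Besov interpolation theorem. Beyond this piece of bookkeeping there is no new work to do: the heart of the argument has already been carried out in Proposition~\ref{lem8}, which transfers the question from the Campanato scale (where real interpolation is known to be delicate in the Morrey regime $\lambda < n/p$) into the Besov scale, where the theory is completely understood.
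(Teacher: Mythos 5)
Your proposal is exactly the route the paper takes (the corollary is stated there as an immediate consequence of Proposition~\ref{lem8} together with the interpolation properties of Besov spaces): convert all three Campanato norms into $B^{\alpha_i}_{\infty,q_i}$ and $B^{\alpha}_{\infty,q}$ norms with $\alpha_i=\lambda_i-\tfrac{n}{p_i}$, $\alpha=\lambda-\tfrac np\in(0,1)$, and then apply the classical real-interpolation identities for Besov spaces. Your additional observation that $\alpha_0=\alpha_1$ can occur even though $\lambda_0<\lambda_1$, so that the equal-smoothness Besov interpolation theorem (which is where the relation $\tfrac1q=\tfrac{1-\theta}{q_0}+\tfrac{\theta}{q_1}$ is genuinely needed) must be invoked in that sub-case, is correct and a useful piece of bookkeeping, but it does not change the argument.
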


\vspace{0.4mm}
	\section{Fourier inequalities in Morrey spaces}\label{SECfouriermorrey}

\subsection{Notation and auxiliary lemmas}
	For a sequence $a=\{a_m\}_{m\in \Z^n}$ such that
	\[
	\lim_{\max\limits_j |m_j|\to \infty}a_m=0,
	\]
	we denote by $a^*=\{a_\nu^*\}_{\nu\in \N}$ its decreasing rearrangement. Furthermore, we define the sequence $a^{**}=\{a_\nu^{**}\}_{\nu\in \N}$ as
	\[
	a_n^{**}=\frac{1}{n}\sum_{\nu=1}^n a_\nu^*.
	\]
It is clear that $a_n^*\leq a_n^{**}$. The convolution of two sequences $b=\{b_m\}_{m\in \Z^n}$ and $c=\{c_m\}_{m\in \Z^n}$ is formally defined as
	\[
	(b*c)_m=\sum_{k\in \Z^n} b_kc_{m-k}.
	\]
For $0<p<\infty$ and $0<q\leq\infty$, we define the discrete  Lorentz space  $\ell_{p,q}$ as a collection
of  sequences $a=\{a_m\}_{m\in\Z^n}$ whose (quasi-)norm given by
	\[
	\|a\|_{\ell_{p,q}}:=\begin{cases}
		\bigg(\displaystyle\sum_{\nu=1}^\infty \frac{\big(\nu^{1/p}a^*_\nu\big)^q}{\nu}  \bigg)^{1/q},&\text{if }q<\infty,\\
		\sup_{\nu\in\N}\nu^{1/p}a^*_\nu,&\text{if }q=\infty,
	\end{cases}
	\]
	is finite.
	\begin{lemma}\label{LEMdualnormlorentz}
	Let $1<p,q<\infty$. If $b=\{b_m\}_{m\in \Z^n}$ and $c=\{c_m\}_{m\in \Z^n}$, then
	\[
	\| b*c\|_{\ell_{p,q}} \leq C(p,q) \sup_{\| h\|_{\ell_{p',q'}}=1} \sum_{\nu=1}^\infty \nu b_\nu^{**}c_\nu^{**}h_\nu^{**}.
	\]
	\end{lemma}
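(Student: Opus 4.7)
My plan is to combine duality for the discrete Lorentz spaces with a discrete version of O'Neil's convolution inequality. Since $|(b*c)_m|\le (|b|*|c|)_m$ pointwise and the right-hand side of the claim is monotone under passing to absolute values, I reduce at once to the case $b,c,h\ge 0$.

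For $1<p,q<\infty$, classical Lorentz duality together with the Hardy--Littlewood rearrangement inequality $\sum_m a_m h_m\le \sum_\nu a_\nu^* h_\nu^*$ gives
\[
\|b*c\|_{\ell_{p,q}}\ \asymp\ \sup_{\|h\|_{\ell_{p',q'}}=1}\sum_{\nu=1}^\infty (b*c)_\nu^*\, h_\nu^*,
\]
so it is enough to estimate the inner sum for a fixed admissible $h$ and then take the supremum. I would then invoke the standard discrete analogue of O'Neil's convolution inequality,
\[
(b*c)_\nu^*\ \le\ (b*c)_\nu^{**}\ \lesssim\ \nu\, b_\nu^{**}\, c_\nu^{**}\ +\ \sum_{k>\nu} b_k^*\, c_k^*,
\]
which is proved by the classical ``big/small'' decomposition of $b$ and $c$ into the portions supported on the indices of their $\nu$ largest values, plus tails, and then estimating the resulting four convolutions.

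Plugging this bound in splits the sum in two. The first contribution, $\sum_\nu \nu\, b_\nu^{**}\, c_\nu^{**}\, h_\nu^*$, is already of the desired form since $h_\nu^*\le h_\nu^{**}$. For the tail, I swap the order of summation,
\[
\sum_\nu h_\nu^* \sum_{k>\nu} b_k^*\, c_k^*\ =\ \sum_k b_k^*\, c_k^* \sum_{\nu<k} h_\nu^*\ \le\ \sum_k k\, b_k^{**}\, c_k^{**}\, h_k^{**},
\]
using the identity $\sum_{\nu\le k} h_\nu^* = k\, h_k^{**}$ together with the pointwise bounds $b_k^*\le b_k^{**}$ and $c_k^*\le c_k^{**}$. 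Combining the two contributions and taking the supremum over $\|h\|_{\ell_{p',q'}}=1$ on both sides yields the claim, with a $p,q$-dependent constant coming solely from the Lorentz duality equivalence. The only technically delicate ingredient is the discrete O'Neil inequality itself, which must be verified carefully in the lattice setting on $\Z^n$; the rest is routine rearrangement bookkeeping and a Hardy-type swap.
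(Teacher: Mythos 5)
Your proposal is correct, and its skeleton (dualize against $\ell_{p',q'}$, pass to rearrangements, estimate $(b*c)^{**}$, split and swap a Hardy-type sum) matches the paper's; the difference is in the middle step. Where you invoke the discrete O'Neil inequality
\[
(b*c)_\nu^{**}\ \lesssim\ \nu\, b_\nu^{**} c_\nu^{**} + \sum_{k>\nu} b_k^* c_k^*,
\]
the paper proves a self-contained substitute: writing $(b*c)_k^{**}=\sup_{|e|=k}\frac1{|e|}\sum_{m\in e}|(b*c)_m|$ and rearranging twice over finite index sets, it gets $(b*c)_k^{**}\le\sum_s b_s^*\,d_{s,k}$ with $d_{s,k}\le c^{**}_{\max(s,k)}$, i.e.\ effectively $(b*c)_k^{**}\le k\,b_k^{**}c_k^{**}+\sum_{s>k}b_s^*c_s^{**}$, which is slightly weaker than O'Neil's bound (tail in $c^{**}$ rather than $c^*$) but is all the final splitting needs. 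Your route buys brevity and places the lemma in its natural context (O'Neil's convolution inequality holds for convolution on any locally compact group, in particular on $\Z^n$, so the lattice version is genuinely standard), while the paper's route buys self-containedness: its $d_{s,k}$ computation is, in essence, exactly the verification you defer when you say the discrete O'Neil inequality "must be verified carefully." So the only load-bearing point in your write-up is that step; with a citation to O'Neil's theorem for convolution structures, or with the big/small decomposition you sketch carried out (it does go through on $\Z^n$), your argument is complete, and your final bookkeeping — $h_\nu^*\le h_\nu^{**}$ for the first term, and $\sum_{\nu<k}h_\nu^*\le k\,h_k^{**}$ together with $b_k^*\le b_k^{**}$, $c_k^*\le c_k^{**}$ for the tail — is exactly the same swap the paper performs.
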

	\begin{proof}
	We have, for any $\{h_\nu\}_{\nu\in \Z^n}$,
	\begin{align*}
	\sum_{\nu\in\Z^n}h_\nu(b*c)_\nu& \leq\sum_{k=1}^\infty h_k^*(b*c)_k^*\leq \sum_{k=1}^\infty h_k^*(b*c)_k^{**}=\sum_{k=1}^\infty h_k^*\sup_{|e|=k}\frac1{|e|}\sum_{m\in e}|(b*c)_m|\\
	&\leq \sum_{k=1}^\infty h_k^*\sup_{|e|=k}\frac1{|e|}\sum_{m\in e}\sum_{r\in\Z^n}|b_r c_{m-r}|=\sum_{k=1}^\infty h_k^*\sup_{|e|=k}\sum_{r\in\Z^n}|b_r|\frac1{|e|}\sum_{m\in e}|c_{m-r}|.
	\end{align*}
	We now apply similar estimates in the inner sum,
	\[
	\sum_{r\in\Z^n}|b_r|\frac1{|e|}\sum_{m\in e}|c_{m-r}|\leq\sum_{s=1}^\infty b_s^*\sup_{|\omega|=s}\frac1{|\omega|}\frac1{|e|}\sum_{t\in \omega}\sum_{m\in e}|c_{m-t}|.
	\]
	Thus,
	\[
	\|b*c\|_{\ell_{p,q}}\asymp\sup_{\|h\|_{\ell_{p',q'}}=1}\sum_{\nu\in \Z^n}h_{\nu}(b*c)_{\nu}\leq \sup_{\|h\|_{\ell_{p',q'}}=1}\sum_{k=1}^\infty h_k^*\sum_{s=1}^\infty b_s^* d_{s,k},
	\]
	where
	\[
	d_{s,k}=\sup_{{}^{|\omega|=s}_{|e|=k}}\frac1{|\omega|}\frac1{|e|}\sum_{t\in\omega}\sum_{m\in e}|c_{m-t}|.
	\]
	We now estimate $d_{s,k}$. If $k\geq s$, we have
	\[
	d_{s,k}\leq \sup_{|\omega|=s}\frac1{|\omega|}\sum_{t\in\omega}\sup_{|e|=k}\frac1{|e|}\sum_{m\in e}|c_{m-t}|=\sup_{|\omega|=s}\frac1{|\omega|}\sum_{t\in\omega} c_k^{**}=c_k^{**}.
	\]
	On the other hand, if $k<s$,
	\[
	d_{s,k}\leq \sup_{|e|=k}\frac1{|e|}\sum_{m\in e}\sup_{|\omega|=s}\frac1{|\omega|}\sum_{t\in\omega}|c_{m-t}|=\sup_{|e|=k}\frac1{|e|}\sum_{m\in e}c_s^{**}=c_s^{**}.
	\]
	Consequently,
	\begin{align*}
	\|b*c\|_{\ell_{p,q}}&\lesssim \sup_{\|h\|_{\ell_{p',q'}}=1}\sum_{k=1}^\infty h_k^*\bigg(\sum_{s=1}^k b_s^* d_{s,k}+\sum_{s=k+1}^\infty b_s^* d_{s,k}\bigg)\\
	&\leq\sup_{\|h\|_{\ell_{p',q'}}=1}\sum_{k=1}^\infty h_k^*\bigg(\sum_{s=1}^k b_s^* c_k^{**}+\sum_{s=k+1}^\infty b_s^* c_s^{**}\bigg)\\
	&\leq\sup_{\|h\|_{\ell_{p',q'}}=1}\sum_{k=1}^\infty h_k^*c_k^{**}\sum_{s=1}^k b_s^* +\sum_{s=1}^\infty b_s^* c_s^{**}\sum_{k=1}^{s} h_k^* \leq 2 \sup_{\|h\|_{\ell_{p',q'}}=1}\sum_{k=1}^\infty k h_k^{**}c_k^{**} b_k^{**},
	\end{align*}
as desired.
\end{proof}
	
	\begin{lemma}\label{LEMsuminverses}
		Define the sequence
		\[
		c = \bigg\{c_r= \frac{1}{\prod_{j=1}^n \overline{r}_j}\bigg\}_{r\in \Z^n},
		\]
where $\overline{r}_j= \max\{|{r}_j|,1\}$ for ${r}_j\in \Z$.
		Then the equivalence
		\[
		c_N^{**}= \frac{1}{N}\sum_{\nu=1}^N c_\nu^*\asymp  \frac{\ln^{n} (N+1)}{N}
		\]
		holds.
	\end{lemma}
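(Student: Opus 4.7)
The plan is to estimate the distribution function
\[
V(\lambda) := \#\{r \in \Z^n : c_r \geq \lambda\} = \#\bigl\{r\in\Z^n : {\textstyle\prod_{j=1}^n} \overline{r}_j \leq 1/\lambda\bigr\},\qquad 0<\lambda\le 1,
\]
and then to recover $c_N^{**}$ via the layer-cake identity
\[
\sum_{\nu=1}^N c_\nu^* \;=\; \int_0^\infty \min\bigl(V(\lambda), N\bigr)\,d\lambda.
\]
Since $c_r\le 1$ for every $r$, the integrand vanishes for $\lambda>1$, and the integral will split naturally at the value $\lambda_0$ where $V(\lambda_0)\asymp N$.

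The first task is to control $V(\lambda)$. For each positive-integer tuple $(b_1,\dots,b_n)$, the number of $r\in\Z^n$ with $(\overline{r}_1,\dots,\overline{r}_n)=(b_1,\dots,b_n)$ is $\prod_{j=1}^n e(b_j)$, where $e(1)=3$ (from $r_j\in\{-1,0,1\}$) and $e(b)=2$ for $b\ge 2$. Since $2\le e(b)\le 3$, these multiplicities are uniformly comparable, so
\[
V(\lambda)\;\asymp\;D_n(1/\lambda),\qquad D_n(M):=\#\{(b_1,\dots,b_n)\in\N^n : b_1\cdots b_n\le M\}.
\]
A short induction on $n$ using the recursion $D_n(M)=\sum_{b\le M}D_{n-1}(M/b)$ together with $\sum_{b\le M}b^{-1}(\ln(M/b))^{n-2}\asymp(\ln M)^{n-1}$ yields $D_n(M)\asymp M(\ln M)^{n-1}$ for $M\ge 2$, and therefore $V(\lambda)\asymp \lambda^{-1}(\ln(1/\lambda))^{n-1}$ for $\lambda\in(0,\tfrac12]$.

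With the level-set estimate in hand, choose $\lambda_0$ so that $V(\lambda_0)\asymp N$. Taking logarithms in the relation $\lambda_0^{-1}(\ln(1/\lambda_0))^{n-1}\asymp N$ gives $\ln(1/\lambda_0)\asymp \ln N$, hence $\lambda_0\asymp (\ln N)^{n-1}/N$ for $N$ large. The layer-cake split becomes
\[
\sum_{\nu=1}^N c_\nu^* \;=\; N\lambda_0 + \int_{\lambda_0}^{1} V(\lambda)\,d\lambda \;\asymp\; (\ln N)^{n-1} + \int_{\lambda_0}^{1}\frac{(\ln(1/\lambda))^{n-1}}{\lambda}\,d\lambda,
\]
and the remaining integral equals $(\ln(1/\lambda_0))^n/n\asymp (\ln N)^n$ via the substitution $u=\ln(1/\lambda)$. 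Dividing by $N$ and noting that $(\ln N)^{n-1}$ is absorbed by $(\ln N)^n$ yields $c_N^{**}\asymp (\ln N)^n/N$. For bounded $N$, both sides of the claimed equivalence are bounded by absolute constants, so the factor $(\ln(N+1))^n$ (rather than $(\ln N)^n$) handles that regime.

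The main obstacle I anticipate is the careful verification of the divisor estimate $D_n(M)\asymp M(\ln M)^{n-1}$ with matching upper and lower bounds (classical, but requires induction on $n$), together with the minor bookkeeping for the multiplicities $e(b_j)$ and the inversion step $V(\lambda_0)\asymp N\Rightarrow \lambda_0\asymp (\ln N)^{n-1}/N$. Once the level-set asymptotic is established, the layer-cake computation is essentially routine.
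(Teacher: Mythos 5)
Your argument is correct, but it follows a genuinely different route from the paper. The paper exploits the fact that $c$ is nonincreasing in each coordinate to sandwich $\sum_{\nu=1}^N c_\nu^*$ between sums over a step hyperbolic cross $E_m=\bigcup_{\nu_1+\cdots+\nu_n\le m}\rho(\nu)$ with $|E_m|\asymp 2^m m^{n-1}\asymp N$, and then computes the sum blockwise: each dyadic block $\rho(\nu)$ contributes $\asymp(\ln 2)^n$, and there are $\asymp m^n$ blocks, giving $\sum_{k\in E_m}|c_k|\asymp m^n\asymp \ln^n(N+1)$ directly, with no distribution-function machinery. You instead estimate the level sets via the multiplicity reduction to the Dirichlet-type count $D_n(M)\asymp M(\ln M)^{n-1}$ and recover $c_N^{**}$ by the layer-cake identity with a split at $\lambda_0\asymp (\ln N)^{n-1}/N$; this is more modular and yields the pointwise information $c_N^*\asymp(\ln N)^{n-1}/N$ as a by-product, but it requires the two-sided divisor asymptotics (your induction, with the usual care for the terms where $M/b$ is bounded) plus some endpoint bookkeeping that you correctly flag as routine: the model formula for $V(\lambda)$ fails for $\lambda$ near $1$ (where $V\ge 1$ while the model tends to $0$), contributing only $O(1)$ to the integral, and since your $\lambda_0$ only satisfies $V(\lambda_0)\asymp N$ rather than $V(\lambda_0)=N$, the displayed equality in the layer-cake split should be a two-sided estimate, e.g.\ using $\min(V(\lambda),N)\ge V(\lambda)/C$ for $\lambda\ge\lambda_0$ in the lower bound. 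None of these affects the conclusion; the paper's hyperbolic-cross argument is shorter and self-contained, while yours is the more classical analytic-number-theory route.
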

	
	\begin{proof}
For any $N\in \N$ take  $m\in \N$ such that $2^{m-1}(m-1)^{n-1}\leq N< 2^m m^{n-1}$. Now we define a step hyperbolic cross as follows:
  	\[
  	E_m=\bigcup_{\substack{\nu_1+\cdots+\nu_n\leq m\\ \nu_i>0}}\rho(\nu),
  	\]
	where
	\[
	\rho(\nu)=\rho(\nu_1,\ldots,\nu_n)=\left\{k\in\Z^n: \lfloor 2^{\nu_i-1}\rfloor\leq|k_i|<2^{\nu_i}, \; i=1,2,\ldots,n \right\},
	\]
	and $\lfloor \cdot \rfloor$ denotes the floor function. Note that $|E_m|\asymp 2^m m^{n-1}\asymp N$ (cf. \cite[p. 22]{DTU}).
  Since	the sequence $c$ is nonincreasing in each index, then, by construction,
	\[
	\sum_{\nu=1}^N c_\nu^*\leq\sum_{k\in E_m}|c_k|\leq \sum_{\nu=1}^{|E_m|}c_\nu^*\lesssim \sum_{\nu=1}^N c_\nu^*.
	\]
	Further, we have
\[
\sum_{k\in E_m}|c_k|=\sum_{\substack{\nu_1+\cdots+\nu_n\leq m\\ \nu_i>0}}\sum_{k\in\rho(\nu)}\frac1{\prod_{i=1}^n\bar k_i}\asymp\sum_{\substack{\nu_1+\cdots+\nu_n\leq m\\ \nu_i>0}}(\ln 2)^n\asymp m^{n}.
\]
Consequently, we have
\[
c_N^{**}=\frac1{N}\sum_{\nu=1}^Nc_{\nu}^*\asymp \frac{ m^{n}}{N}\asymp\frac{\ln^{n} (N+1)}{N}.\qedhere
\]
\end{proof}

\subsection{Main result}
Our next result is a  new sufficient condition for the Fourier transform to be in the Morrey space $M_{p,q}^\lambda$.

\begin{theorem}\label{THMmorreynoweight}

	Let $0< p<\infty$, $0<q\leq \infty$, and $\max \big\{0,\frac{n}{p}-\frac{n}{2}\big\} <\lambda< \frac{n}{p}$. Suppose that
	\begin{equation}
		\label{EQfunctionmorrey}
	D_{p,q}^\lambda(f):= \bigg(\sum_{m\in \Z}\bigg( \sup_{e\subset \Z^n} \frac{ (1+\ln |e|)^{n+1}}{|e|^\frac{\beta}{n}}\cdot \frac{1}{\big| \cup_{k\in e} Q_k^{m}\big|^{\frac{1}{p}-\frac{\lambda}{n}}}\int_{\cup_{k\in e} Q_k^{m}}|f(x)|\, dx\bigg)^q\bigg)^\frac{1}{q}<\infty,
	\end{equation}
	where $\beta=\lambda-\max \big\{0,\frac{n}{p}-\frac{n}{2}\big\}$ \textnormal{(}with the usual modification if $q=\infty$\textnormal{)}, then we have
	\begin{equation}
		\label{EQthmmorreybasic}
	\| \widehat{f}\|_{M_{p,q}^\lambda}\lesssim D_{p,q}^\lambda(f).
	\end{equation}
\end{theorem}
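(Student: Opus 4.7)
The plan is to reduce $\|\widehat f\|_{M_{p,q}^\lambda}$ to a dyadic sum over scales via Remark \ref{REMcubes}, to control $\|\widehat f\|_{L_p(Q)}$ for a single cube $Q$ of side $2^m$ by dualizing against the Dirichlet-type kernel $\mathcal F\chi_Q$, and then to invoke the discrete Lorentz convolution machinery of Lemmas \ref{LEMdualnormlorentz} and \ref{LEMsuminverses}. Remark \ref{REMcubes} gives
\[
\|\widehat f\|_{M_{p,q}^\lambda}^q\asymp \sum_{m\in\Z}\Bigl(2^{-m\lambda}\sup_{Q\in G^m}\|\widehat f\|_{L_p(Q)}\Bigr)^q,
\]
and since $|\mathcal F\chi_Q|$ depends only on the side length of $Q$, it suffices to estimate $\|\widehat f\|_{L_p(Q_0^m)}$.

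Set $t:=\max\{p,2\}$. For $p\ge 2$, the dual Hausdorff--Young estimate $\|g\|_{L_p}\le\|\mathcal F g\|_{L_{p'}}$ applied to $g=\widehat f\,\chi_Q$ yields $\|\widehat f\|_{L_p(Q)}\le\|\widetilde f*\mathcal F\chi_Q\|_{L_{p'}}$, where $\widetilde f(x)=f(-x)$; for $0<p<2$, H\"older's inequality with exponent $2/p$ followed by Plancherel gives $\|\widehat f\|_{L_p(Q)}\le|Q|^{1/p-1/2}\|f*\mathcal F\chi_Q\|_{L_2}$. A direct sinc-type computation yields $|\mathcal F\chi_Q(y)|\lesssim\prod_{i=1}^n\min(2^m,|y_i|^{-1})$, so on each dual-scale cube $Q_k^{-m}$ one has $|\mathcal F\chi_Q|\lesssim 2^{mn}c_k$ with $c_k=\prod_i\bar k_i^{-1}$, the sequence in Lemma \ref{LEMsuminverses}. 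Writing $b^{-m}_k:=\int_{Q_k^{-m}}|f|$ and majorizing $(|f|*|\mathcal F\chi_Q|)(x)\lesssim 2^{mn}(c*\widetilde b^{-m})_j$ for $x\in Q_j^{-m}$ (with $\widetilde b^{-m}$ a bounded averaging of $b^{-m}$ over neighboring indices), in both cases one arrives at
\[
\|\widehat f\|_{L_p(Q)}\lesssim 2^{mn/p}\,\|c*\widetilde b^{-m}\|_{\ell^{t'}}.
\]

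Now apply Lemma \ref{LEMdualnormlorentz} with $(p,q)=(t',t')$ (valid because $1<t'<\infty$ when $p<\infty$), combined with the asymptotics $c_N^{**}\asymp\ln^n(N+1)/N$ from Lemma \ref{LEMsuminverses}, to obtain
\[
\|c*\widetilde b^{-m}\|_{\ell^{t'}}\lesssim\sup_{\|h\|_{\ell^t}=1}\sum_{\nu\ge 1}\ln^n(\nu+1)(b^{-m})^{**}_\nu h^{**}_\nu.
\]
Writing $V_r:=\sup_N(1+\ln N)^{n+1}N^{1/t'}(b^r)^{**}_N$, so that $(b^{-m})^{**}_\nu\le V_{-m}(1+\ln\nu)^{-n-1}\nu^{-1/t'}$, H\"older's inequality together with Hardy's inequality $\|h^{**}\|_{\ell^t}\lesssim\|h\|_{\ell^t}$ bounds the supremum by $V_{-m}\bigl(\sum_\nu\nu^{-1}(1+\ln\nu)^{-t'}\bigr)^{1/t'}\lesssim V_{-m}$; convergence of the series uses $t'>1$, i.e., $p<\infty$. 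Since $n/p-\lambda=n/s$, this yields $2^{-m\lambda}\sup_Q\|\widehat f\|_{L_p(Q)}\lesssim 2^{mn/s}V_{-m}$.

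Raising to the $q$-th power, summing over $m\in\Z$, and substituting $r=-m$ produces $\sum_r(2^{-rn/s}V_r)^q$; the identities $|\cup_{k\in e}Q_k^r|=|e|\cdot 2^{rn}$ together with $\beta/n+1/s=1/t$ identify this sum with $D_{p,q}^\lambda(f)^q$, finishing the proof. The main technical obstacle is the logarithmic bookkeeping in the passage through Lemmas \ref{LEMdualnormlorentz} and \ref{LEMsuminverses}: the factor $\ln^n(\nu+1)$ coming from $c_\nu^{**}$ absorbs $n$ of the $n+1$ logarithms that appear in $V_r$, leaving exactly one logarithmic factor, which is the minimum required for $\sum_\nu\nu^{-1}(1+\ln\nu)^{-t'}$ to converge. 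This is precisely why the exponent $n+1$ (and no smaller) is needed in the definition of $D_{p,q}^\lambda(f)$.
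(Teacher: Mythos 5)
Your proof is correct, and its overall skeleton coincides with the paper's: discretize the Morrey norm over dyadic cubes, bound $\|\widehat f\|_{L_p(Q)}$ by $2^{mn/p}$ times a convolution $\|b*c\|$ with $b_k=\int_{Q_k^{-m}}|f|$ and $c_k=\prod_j\bar k_j^{-1}$, and then run Lemmas~\ref{LEMdualnormlorentz} and \ref{LEMsuminverses} followed by H\"older to extract $\sup_\nu(1+\ln\nu)^{n+1}\nu^{1/t'}b_\nu^{**}$, which is identified with $D_{p,q}^\lambda(f)$ exactly as in the paper. The one genuine variation is the derivation of the single-cube estimate: the paper rescales $Q$ to the unit cube and invokes the Hardy--Littlewood inequality for Fourier coefficients, $\|\widehat f\|_{L_p(Q)}\lesssim 2^{mn/p}\|a(f)\|_{\ell_{p',p}}$, then bounds each coefficient $a_r$ by $(b*c)_r$ through an explicit computation with the kernel $\prod_j(1-e^{-2\pi i 2^m x_j})/(2^m x_j+r_j)$; you instead dualize via Hausdorff--Young (resp.\ H\"older plus Plancherel for $p<2$) against $\mathcal F\chi_Q$ and use the pointwise bound $|\mathcal F\chi_Q|\lesssim 2^{mn}c_k$ on $Q_k^{-m}$, landing in the slightly larger norm $\ell^{p'}=\ell_{p',p'}$ rather than $\ell_{p',p}$. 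This loss is harmless because the subsequent H\"older step only retains the weak-type information $\sup_\nu(1+\ln\nu)^{n+1}\nu^{1/t'}b_\nu^{**}$ in either case, and your direct treatment of $0<p<2$ is just an unwound form of the embedding $M^{\beta}_{2,q}\hookrightarrow M^{\lambda}_{p,q}$ that the paper cites. Your closing remark correctly pinpoints why the exponent $n+1$ is what the method requires: $n$ logarithms are consumed by $c_\nu^{**}$ and one is needed for the convergence of $\sum_\nu \nu^{-1}(1+\ln\nu)^{-t'}$.
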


\begin{corollary}\label{Cormorreynoweight}
Under the conditions of Theorem \ref{THMmorreynoweight},
we have
\begin{equation}
		\label{Ftr-lorentz}\| \widehat{f}\|_{M_{p,q}^\lambda}\lesssim \| {f}\|_{L_{s',q}}, \qquad 0<q\le \infty,
	\end{equation}
where $0<\frac{1}{s}=\frac{1}{p}-\frac{\lambda}{n}<\frac{1}{2}$. In particular,
\[
\|\widehat f\|_{M_p^\lambda}\lesssim \|f\|_{L_{s',\infty}}, \qquad 2<s<\infty,
\]
cf. \eqref{EQmorrey-weaklqineq}.

Moreover, estimate
\eqref{EQthmmorreybasic}  is sharper than
\eqref{Ftr-lorentz} as for any  $0<q\leq \infty$ there exists $f$ such that $D_{p,q}^\lambda(f)<\infty$ but $\| {f}\|_{L_{s',q}}=\infty$.
\end{corollary}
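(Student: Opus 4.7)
The proof has two parts: establishing $D^\lambda_{p,q}(f)\lesssim\|f\|_{L_{s',q}}$, which combined with Theorem~\ref{THMmorreynoweight} yields \eqref{Ftr-lorentz} and (taking $q=\infty$) the particular $L_{s',\infty}$ estimate; and exhibiting an $f$ demonstrating that \eqref{EQthmmorreybasic} is strictly sharper.

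For the first part, the key tool is the Hardy--Littlewood rearrangement inequality $\int_E|f|\le\int_0^{|E|}f^*=|E|\,f^{**}(|E|)$, applied with $E=E_m:=\cup_{k\in e}Q^m_k$ and $|E_m|=N\cdot 2^{mn}$, $N=|e|$. This gives $|E_m|^{-1/s}\int_{E_m}|f|\le|E_m|^{1/s'}f^{**}(|E_m|)$. I would then dyadically decompose the supremum over $N\in\N$ by considering $N\in[2^{j-1},2^j]$ for integers $j\ge 0$; by quasi-monotonicity of $t\mapsto t^{1/s'}f^{**}(t)$ at dyadic scales together with the standard discretization $\|f\|_{L_{s',q}}\asymp\|a\|_{\ell_q(\Z)}$ for $a_l:=2^{l/s'}f^{**}(2^l)$, the inner supremum at scale $m$ is controlled by $\sup_{j\ge 0}(j+1)^{n+1}2^{-j\beta/n}a_{j+mn}$. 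Applying $(\sup_j b_j)^q\le\sum_j b_j^q$ (valid for any $q>0$ and $b_j\ge 0$) and swapping the order of summation,
\[
\sum_{m\in\Z}\Big(\sup_{j\ge 0}(j+1)^{n+1}2^{-j\beta/n}a_{j+mn}\Big)^q\le\sum_{j\ge 0}(j+1)^{(n+1)q}2^{-j\beta q/n}\sum_{m\in\Z}a_{j+mn}^q\lesssim\|a\|_{\ell_q(\Z)}^q,
\]
since $\sum_m a_{j+mn}^q\le\|a\|_{\ell_q(\Z)}^q$ (the index set $\{j+mn:m\in\Z\}$ is a coset of $n\Z$ in $\Z$), and the outer sum in $j$ converges thanks to $\beta>0$. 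The case $q=\infty$ is handled analogously with $\sup_m$ in place of $\sum_m$.

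For the sharpness claim, for each $q\in(0,\infty]$ I would construct $f=\sum_{j\ge 1}j^{-\alpha}\chi_{A_j}$, where $A_j$ is a unit cube centered at the widely spaced point $(2^{2^j},0,\dots,0)\in\R^n$, and $\alpha\in(1/s'-\beta/n,\,1/s']$ (this interval is non-empty precisely by the hypothesis $\max\{0,n/p-n/2\}<\lambda$; take $\alpha=1/s'$ for $q<\infty$ and any $\alpha<1/s'$ for $q=\infty$). Since $f^*(t)\asymp t^{-\alpha}$ for $t\ge 1$, the Lorentz norm $\|f\|_{L_{s',q}}^q\asymp\sum_j j^{q(1/s'-\alpha)-1}$ diverges by the choice of $\alpha$. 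To verify $D^\lambda_{p,q}(f)<\infty$, I would analyze the inner supremum in two regimes: at scales $m\le 0$, each $Q^m_k$ lies in at most one $A_j$, and a direct computation (filling the $A_j$ with largest $c_j$ first) bounds the inner supremum by $(|m|n)^{n+1}\,2^{m\beta}$, which decays exponentially as $m\to-\infty$; at scales $m\ge 0$, the wide spacing ensures each cube of order $m$ contains at most $O(\log m)$ supports $A_j$, leading to a bound of order $(\log m)^{1-\alpha}\,2^{-mn/s}$, which is summable in $m$ for $q<\infty$ and bounded for $q=\infty$. The main obstacle is the sharpness part, where the positivity of $\beta$ plays a dual role: it drives the exponential decay $2^{m\beta}$ at fine scales, and it is exactly what makes the admissible interval for $\alpha$ non-empty, enabling the construction.
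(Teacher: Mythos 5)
Your proposal is correct. The first part (the bound $D^\lambda_{p,q}(f)\lesssim\|f\|_{L_{s',q}}$) is essentially the paper's argument: replace the average of $|f|$ over $\cup_{k\in e}Q^m_k$ by $f^{**}(2^{mn}|e|)$, split the supremum over $|e|$ into dyadic blocks, dominate the supremum by an $\ell_q$ sum, and swap the order of summation using $\beta>0$; your discrete version with $a_l=2^{l/s'}f^{**}(2^l)$ and cosets of $n\Z$ is just the sequence form of the paper's integral bookkeeping. The second part is where you genuinely diverge. The paper restricts to $n=1$ and takes $f=\sum_k k^{-\alpha}\chi_{[2^{k-1},2^{k-1}+1)}$ with $\tfrac1{s'}-\beta<\alpha<\tfrac1{s'}$ strictly, so that $f^*(t)\asymp t^{-\alpha}$ forces even $\|f\|_{L_{s',\infty}}=\infty$ and a single choice of $\alpha$ handles every $q$ at once; the large-scale regime then yields $m^{1-\alpha}2^{-m/s}$, still summable. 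You instead work in $\R^n$ with doubly exponentially spaced unit cubes and allow the endpoint $\alpha=\tfrac1{s'}$ for finite $q$; the extra sparsity only improves the large-scale bound to $(\log m)^{1-\alpha}2^{-mn/s}$ and is not needed, but your version has the mild advantage of being genuinely $n$-dimensional, whereas the paper's example is one-dimensional "for simplicity". Both constructions hinge on exactly the same point you identify: $\beta>0$ both produces the exponential decay $2^{m\beta}$ at fine scales and makes the admissible interval for $\alpha$ nonempty. Your computations in both regimes check out (the fine-scale supremum is attained near $|e|\asymp 2^{-mn}$ because $\tfrac1{s'}-\tfrac{\beta}{n}>0$, and the coarse-scale gain $N^{1-\alpha}$ is killed by $N^{-1/s-\beta/n}$ precisely because $\alpha>\tfrac1{s'}-\tfrac{\beta}{n}$), so the sketch closes without gaps.
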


\begin{remark}
	\begin{enumerate}[label=(\roman{*}),wide = 0pt]
	\item Note that $\beta$ is always positive;	if $p\geq 2$, then $0<\lambda=\beta \leq \frac{n}{p}$ and if $0<p<2$, then $0<\beta\leq \frac{n}{2}$.
	Moreover, analyzing the proof one can slightly improve the exponent in the term $(1+\ln |e|)^{n+1}$ by writing $(1+\ln |e|)^{n+\varepsilon}$ with $\varepsilon>\min(1/p,1/2).$

	\item In Theorem~\ref{THMmorreynoweight}, the limiting case $\lambda=\frac{n}{p}$ and $q=\infty$ yields the trivial inequality $\| \widehat{f}\|_{L_\infty}\lesssim \|f\|_{L_1}$, since
	\[
	D_{p,\infty}^{\frac{n}{p}}(f)= \sup_{m\in \Z}\sup_{e\subset \Z^n} \frac{(1+\ln |e|)^{n+1}}{|e|^\frac{\beta}{n}}\int_{\cup_{k\in e} Q_k^m} |f(x)|\, dx \asymp \|f\|_{L_1}.
	\]
	We also note that this remark is not applicable for $\lambda=\frac{n}{p}$ and $q<\infty$, since in this case $M_{p,q}^\frac{n}{p}$ only contains the zero function.

	\item We observe that Theorem~\ref{THMmorreynoweight} does not yield any meaningful bound outside the given range of $\lambda$. More precisely, if $0\leq \lambda\leq \max\{0,\frac{n}{p}-\frac{n}{2}\}$ and $0<q\leq \infty$, the expression \eqref{EQfunctionmorrey} is infinite. Indeed, in this case $\beta=\lambda-\max\{0,\frac{n}{p}-\frac{n}{2}\}\leq 0$ and there exists a sequence $\{m_\nu\}\subset \Z$ such that
	\[
	D_{p,q}^\lambda(f)\geq \sup_{e\subset \Z^n} \frac{(1+\ln |e|)^{n+1}}{\big|\cup_{k\in e} Q_k^{m_\nu} \big|^{\frac{1}{p}-\frac{\lambda}{n}}}\int_{\cup_{k\in e} Q_k^{m_\nu}} |f(x)|\, dx \to \infty,\qquad \nu\to \infty,
	\]
	unless $f\equiv 0$, since the cubes $Q_k^m$ may be split into a union of arbitrarily many smaller cubes (i.e., $|e|$ is arbitrarily large), without changing the value of the averaged integral.
	
	\item \label{REM4equivalentdecription}
	Since for any $m\in \Z$ and $e\subset \Z^n$ one has $\big|\cup_{k\in e} Q_k^{m} \big|=2^{mn}|e|$, the supremum in \eqref{EQfunctionmorrey} may  be equivalently written as
	\begin{equation}
		\label{EQsupremumequiv}
		\sup_{e\subset \Z^n}\frac{(1+\ln |e|)^{n+1}}{|e|^{\frac{1}{p}-\max\{0,\frac{1}{p}-\frac{1}{2}\}}}\cdot \frac{1}{(2^{mn})^{\frac{1}{p}-\frac{\lambda}{n}}}\int_{\cup_{k\in e} Q_k^{m}}|f(x)|\, dx.
	\end{equation}

	\item Homogeneity arguments show that if the inequality
	\[
	\| \widehat{f}\|_{M_{p,q}^\lambda} \lesssim  \bigg(\sum_{m\in \Z}\bigg( \sup_{e\subset \Z^n} \frac{ (1+\ln |e|)^{n+1}}{|e|^\frac{\beta}{n}}\cdot \frac{1}{\big| \cup_{k\in e} Q_k^{m}\big|^{\alpha}}\int_{\cup_{k\in e} Q_k^{m}}|f(x)|\, dx\bigg)^q\bigg)^\frac{1}{q}
	\]
	holds for some $\alpha\geq 0$ and $\beta>0$, then necessarily $\alpha=\frac{1}{p}-\frac{\lambda}{n}$.
\end{enumerate}
\end{remark}

	\begin{proof}[Proof of Theorem~\ref{THMmorreynoweight}]
		Let $p\geq 2$. By Hardy-Littlewood inequality  for the Fourier coefficients (cf. \cite{SW}), we have, for each $m\in \Z$ and $k\in \Z^n$,
		\begin{equation}
			\label{EQcoeffinequality}
		\| \widehat{f}\|_{L_p(Q^m_k)} =\bigg( \int_{Q^m_k} |\widehat{f}(x)|^p \, dx\bigg)^\frac{1}{p} = 2^\frac{mn}{p}\bigg( \int_{Q^0_0}\big|\widehat{f}\big(2^mx+2^{m}k\big)\big|^p\, dx\bigg)^\frac{1}{p} \lesssim 2^\frac{mn}{p} \| a(f)\|_{\ell_{p',p}},
		\end{equation}
		where $a(f) = \{a_r\}_{r\in \Z^n}$, and for each $r\in \Z^n$,
		\[
		a_r = \int_{Q^0_0} \widehat{f}\big(2^my+2^{m}k\big) e^{-2\pi i (y,r)}\, dy =  \int_{Q^0_0} \int_{\R^n} f(x) e^{-2\pi i(x,(2^m y+2^{m}k))}\,  e^{-2\pi i (y,r)}\,dx \, dy.
		\]
		We can estimate the Fourier coefficients $a_r$ as follows:
		\begin{align*}
		|a_{r}| &= \bigg| \int_{\R^n} f(x) e^{-2\pi i (x,2^{m} k)} \int_{Q_0^0} e^{-2\pi i (y,2^{m}x+r)}\, dy\, dx \bigg| \\
		&\leq \int_{\R^n} |f(x)| \prod_{j=1}^n  \bigg| \int_{0}^1 e^{-2\pi i y_j (2^m x_j+r_j)}\,dy_j\bigg|\, dx,
		\end{align*}
		where the change of order of integration will  be justified by  $ D_s<\infty$, see \eqref{EQfunctionmorrey}. Since
		\[
		\bigg|\int_0^1 e^{-2\pi i y_j (2^m x_j+r_j)}\,dy_j \bigg|  =\bigg|\frac{1-e^{-2\pi i (2^m x_j+r_j)}}{2\pi (2^m x_j+r_j)}\bigg|,
		\]
		we have
			\begin{equation}
			\label{fff}
		|a_{r}| \leq \frac{1}{(2\pi)^n}\sum_{k\in \Z^n} \int_{Q_k^{-m}}|f(x)| \prod_{j=1}^n  \bigg|\frac{1-e^{-2\pi i 2^m x_j}}{ 2^m x_j+r_j}\bigg|\, dx,
			\end{equation}
		where we have used the decomposition of $\R^n$ into disjoint dyadic cubes of order $-m$, i.e.,
		\[
		\R^n =\bigcup_{k\in \Z^n}Q_k^{-m}.
		\]
		We now observe that for $x_j\in Q_k^{-m}$, we have $|2^m x_j-k_j|\leq 1 $. This implies that for any $k_j,r_j\in \Z$ such that $|k_j+r_j|\geq 2$ we have
		\begin{align*}
		\bigg|\frac{1-e^{-2\pi i 2^m x_j}}{2^m x_j+r_j}\bigg|
\leq \frac{2}{|k_j+r_j|-|2^m x_j-k_j|}\leq \frac{2}{|k_j+r_j|-1}
\leq \frac{4}{ |k_j+r_j|}.
		\end{align*}
		On the other hand, if $|k_j+r_j|< 2$, we have
		\[
		\bigg|\frac{1-e^{-2\pi i 2^m x_j}}{2^m x_j+r_j}\bigg| = \bigg|\frac{1-e^{-2\pi i (2^m x_j+r_j)}}{2^m x_j+r_j}\bigg|\leq 2\pi .
		\]
		Combining these estimates together with \eqref{fff}, we obtain
		\begin{equation}
			\label{EQa_restimate}
		|a_{-r}|\lesssim  (b*c)_r,
		\end{equation}
		where
		\begin{equation}
		\label{EQb}
		b =\bigg\{b_k= \int_{Q_k^{-m}} |f(x)|\, dx\bigg\}_{k\in \Z^n}, \qquad c = \bigg\{c_k= \frac{1}{\prod_{j=1}^n \overline{k}_j}\bigg\}_{k\in \Z^n}.
		\end{equation}
		Combining inequalities \eqref{EQcoeffinequality} and \eqref{EQa_restimate}, and then using Lemmas~\ref{LEMdualnormlorentz}~and~\ref{LEMsuminverses} we obtain the estimate
		\begin{align*}
		\| \widehat{f}\|_{L_p (Q_k^m)}& \lesssim 2^\frac{mn}{p} \| b*c\|_{\ell_{p',p}} \lesssim  2^\frac{mn}{p} \sup_{\| h\|_{\ell_{p,p'}}=1}\sum_{\nu=1}^\infty  (1+\ln \nu)^{n} b_\nu^{**}h_{\nu}^{**}.
		\end{align*}
	Further, applying H\"older's inequality,
	\begin{align*}
	\| \widehat{f}\|_{L_p (Q_k^m)}&\lesssim 2^\frac{mn}{p} \sup_{\|h\|_{\ell_{p,p'}}=1} \sum_{\nu=1}^\infty  (1+\ln \nu)^{n} b_\nu^{**}h_{\nu}^{**} \\
	& \lesssim 	2^\frac{mn}{p}\sup_{\| h\|_{\ell_{p,p'}}=1} \sup_{\nu \in \N}(1+\ln \nu)^{n+1} \nu^\frac{1}{p'}b_\nu^{**}\bigg(\sum_{j=1}^\infty\Big(j^\frac{1}{p}h_j^{**}\Big)^{p'}\frac{1}{j}\bigg)^\frac{1}{p'} \bigg(\sum_{j=1}^\infty \frac{1}{j(1+\ln j)^p} \bigg)^\frac{1}{p}\\
	&\lesssim 2^\frac{mn}{p}\sup_{\nu \in \N}(1+\ln \nu)^{n+1}\nu^\frac{1}{p'}b_\nu^{**},
	\end{align*}
	where we have used that
	\[
	\| h\|_{\ell_{p,p'}} \asymp \bigg(\sum_{j=1}^\infty\Big(j^\frac{1}{p}h_j^{**}\Big)^{p'}\frac{1}{j}\bigg)^\frac{1}{p'},\qquad 1<p< \infty,
	\]
	(cf. \cite[Ch. 4, Lemma 4.5]{BSbook}).	Note that if $e\subset \Z^n$, then we have
	\[
	2^{m(\frac{n}{p}-\lambda)} =\frac{|e|^{\frac{1}{p}-\frac{\lambda}{n}}}{\big| \cup_{k\in e} Q_k^{-m}\big|^{\frac{1}{p}-\frac{\lambda}{n}}},
	\]
	and also
	\begin{equation}
		\label{EQb**}
	b_\nu^{**} = \frac{1}{\nu}\sum_{j=1}^\nu b_j^*  = \frac{1}{\nu} \sup_{\substack{e\subset \Z^n\\|e|=\nu}} \int_{\cup_{k\in e} Q_k^{-m}}|f(x)|\, dx.
	\end{equation}
	We finally have by Remark~\ref{REMcubes} and the above estimates and equalities, for $q=\infty$,
	\begin{align*}
	\|\widehat{f}\|_{M^\lambda_{p,\infty}}& \asymp \sup_{m\in \Z} 2^{-m\lambda} \sup_{k\in \Z^n} \|\widehat{f}\|_{L_p(Q_k^m)} \lesssim \sup_{m\in \Z} 2^{m(\frac{n}{p}-\lambda)}\sup_{\nu\in \N}(1+\ln \nu)^{n+1}\nu^\frac{1}{p'}b_\nu^{**}\\
	&=\sup_{m\in \Z}\sup_{e\subset \Z^n} \frac{ (1+\ln |e|)^{n+1}}{|e|^\frac{\lambda}{n}} \frac{1}{\big| \cup_{k\in e} Q_k^{-m}\big|^{\frac{1}{p}-\frac{\lambda}{n}}}\int_{\cup_{k\in e} Q_k^{-m}}|f(x)|\, dx=D_\infty(f).
	\end{align*}
	For $q<\infty$, we derive
	\begin{align*}
	\|\widehat{f}\|_{M^\lambda_{p,q}}& \asymp \bigg(\sum_{m\in \Z}\bigg( 2^{-m\lambda} \sup_{k\in \Z^n} \|\widehat{f}\|_{L_p(Q_k^m)}\bigg)^q\bigg)^\frac{1}{q}\lesssim \bigg(\sum_{k\in \Z}\bigg( 2^{m(\frac{n}{p}-\lambda)}\sup_{\nu\in \N}(1+\ln \nu)^{n+1}\nu^\frac{1}{p'}b_\nu^{**}\bigg)^q\bigg)^\frac{1}{q}\\
	&=\bigg(\sum_{m\in \Z}\bigg( \sup_{e\subset \Z^n} \frac{ (1+\ln |e|)^{n+1}}{|e|^\frac{\lambda}{n}} \frac{1}{\big| \cup_{k\in e} Q_k^{-m}\big|^{\frac{1}{p}-\frac{\lambda}{n}}}\int_{\cup_{k\in e} Q_k^{-m}}|f(x)|\, dx\bigg)^q\bigg)^\frac{1}{q}=D_{p,q}^\lambda(f).
	\end{align*}
	
	Let now $0< p<2$. From the embedding $M^{\beta}_{2,q}\hookrightarrow M^\lambda_{p,q}$ (Lemma~\ref{LEMembeddingsmorrey}) and the above inequalities we obtain the desired conclusion.
	\end{proof}

\begin{proof}[Proof of Corollary~\ref{Cormorreynoweight}]
First we show that for  $0<  p <\infty$, $0<  q\le \infty$, $0<\lambda\leq \frac{n}{p}$, and $\frac{1}{s}=\frac{1}{p}-\frac{\lambda}{n}$,
	\[D_{p,q}^\lambda(f)=\bigg(\sum_{m\in \Z}\bigg( \sup_{e\subset \Z^n} \frac{ (1+\ln |e|)^{n+1}}{|e|^\frac{\beta}{n}}\cdot \frac{1}{\big| \cup_{k\in e} Q_k^{m}\big|^{\frac{1}{p}-\frac{\lambda}{n}}}\int_{\cup_{k\in e} Q_k^{m}}|f(x)|\, dx\bigg)^q\bigg)^\frac{1}{q}\lesssim \| f\|_{L_{s',q}}.\]
For fixed $m\in \Z$ and $e\subset \Z^n$, since $2^{mn}|e|=\big| \cup_{k\in e} Q_k^{m}\big|$,
\begin{align*}
&\phantom{=}
\frac{ (1+\ln |e|)^{n+1}}{|e|^\frac{\beta}{n}}\cdot \frac{1}{\big| \cup_{k\in e} Q_k^{m}\big|^{\frac{1}{p}-\frac{\lambda}{n}}}\int_{\cup_{k\in e} Q_k^{m}}|f(x)|\, dx \\
&=
 \frac{ (1+\ln |e|)^{n+1}}{|e|^\frac{\beta}{n}}\cdot \frac{1}{2^{mn}|e|}
 (2^{mn}|e|)^{\frac{1}{s'}}
 \int_{\cup_{k\in e} Q_k^{m}}|f(x)|\, dx
\leq
 \frac{ (1+\ln |e|)^{n+1}}{|e|^\frac{\beta}{n}} \Big((2^{mn}|e|)^\frac{1}{s'}f^{**}(2^{mn}|e|) \Big).
\end{align*}
Further, since $f^{**}$ is decreasing,
\begin{align*}
	&\phantom{=}\bigg(\sup_{e\subset \Z^n} \frac{ (1+\ln |e|)^{n+1}}{|e|^\frac{\beta}{n}}\cdot(2^{mn}|e|)^\frac{1}{s'}f^{**}(2^{mn}|e|)\bigg)^q\\
	&\leq \bigg( \sup_{t\geq 0} \sup_{2^t\leq \ell<2^{t+1}} \frac{ (1+\ln \ell)^{n+1}}{\ell^\frac{\beta}{n}}\cdot(2^{mn}\ell)^\frac{1}{s'}f^{**}(2^{mn}\ell)\bigg)^q\lesssim \bigg( \sup_{t\geq 0} \frac{ (1+t)^{n+1}}{2^{t\frac{\beta}{n}}}\cdot(2^{mn+t})^\frac{1}{s'}f^{**}(2^{mn+t})\bigg)^q\\
	&\lesssim \sum_{t=0}^\infty \bigg(\frac{(1+t)^{n+1}}{2^{t \frac{\beta}{n}}}\bigg)^q  \int_{2^{ (m-1)n+t}}^{2^{mn+t}} \Big(x^{\frac{1}{s'}}f^{**}(x)\Big)^q \,\frac{dx}{x}.
\end{align*}
Combining all the  estimates, we get
\begin{align*}
	D_{p,q}^\lambda(f) \lesssim \bigg(\sum_{m\in \Z} \sum_{t=0}^\infty \bigg(\frac{(1+t)^{n+1}}{2^{t \frac{\beta}{n}}}\bigg)^q  \int_{2^{ (m-1)n+t}}^{2^{mn+t}} \Big(x^{\frac{1}{s'}}f^{**}(x)\Big)^q \,\frac{dx}{x}\bigg)^\frac{1}{q} \asymp \|f\|_{L_{s',q}},
\end{align*}
since $\beta>0$. Let us now show that  inequality \eqref{EQmorrey-weaklqineq} is weaker than \eqref{EQthmmorreybasic}, in other words, there exists $f$ for which $D_{p,q}^\lambda(f)<\infty$
   but  $\| f\|_{L_{s',q}}=\infty$. For simplicity, let $n=1.$ For $\frac{1}{s}<1-\alpha<\frac{1}{s}+\beta$, we define
	\[
	f(x)=\begin{cases}
		k^{-\alpha},& \text{if }x\in [2^{k-1},2^{k-1}+1), \, k\in \N,\\
		0,&\text{otherwise}.
	\end{cases}
	\]
	For $m\geq 1$ and $e\subset \Z$, we have
	\begin{align*}
	\frac{1}{\big| \cup_{k\in e} Q_k^{m}\big|^{\frac{1}{p}-\lambda}}\int_{\cup_{k\in e} Q_k^{m}}|f(x)|\,dx &\leq
 \frac{1}{2^{\frac{m}{s}}}
 \int_{1}^{2^{m}}|f(x)|\,dx+
  \frac{1}{(|e|2^{m})^\frac{1}{s}}\sum_{k=2}^{|e|+1} \int_{2^{k}}^{2^{k}+1}|f(x)|\,dx \\
	&\lesssim
 \frac{m^{1-\alpha}}{2^{\frac{m}{s}}}+  \frac{|e|^{1-\alpha}}{(|e|2^{m})^\frac{1}{s}}
.
	\end{align*}
 Therefore,
	\begin{align*}
	&\phantom{=}\sup_{e\subset \Z^n} \frac{ (1+\ln |e|)^{2}}{|e|^\beta}\cdot \frac{1}{\big| \cup_{k\in e} Q_k^{m}\big|^{\frac{1}{p}-\lambda}}\int_{\cup_{k\in e} Q_k^{m}}|f(x)|\,dx \\&\lesssim
 \frac{1}{2^\frac{m}{s}}
\sup_{e\subset \Z^n}
\Big(
 {m^{1-\alpha}}+
\frac{ (1+\ln |e|)^{2}}{|e|^{\beta+\frac{1}{s}+\alpha-1}}
 \Big)
 \lesssim
 \frac{{m^{1-\alpha}}}{2^\frac{m}{s}}.
\end{align*}
	For $m\leq 0$ and $e\subset \Z$, let $\nu\in \N$ be such that $\nu 2^{-m}\leq |e|<(\nu+1)2^{-m}$. Then,
	\[
	\int_{\cup_{k\in e} Q_k^{m}}|f(x)|\, dx\leq \sum_{\ell=1}^\nu \int_{2^\ell}^{2^{\ell}+1}|f(x)|\, dx=\sum_{\ell=1}^\nu t^{-\alpha} \asymp \nu^{1-\alpha} \leq (2^m |e|)^{1-\alpha},
	\]
	and so,
	\begin{align*}
	&\phantom{=}\sup_{e\subset \Z^n} \frac{ (1+\ln |e|)^{2}}{|e|^\beta}\cdot \frac{1}{\big| \cup_{k\in e} Q_k^{m}\big|^{\frac{1}{p}-\lambda}}\int_{\cup_{k\in e} Q_k^{m}}|f(x)|\,dx \\
	&\leq \phantom{=}\sup_{e\subset \Z^n} \frac{ (1+\ln |e|)^{2}}{|e|^\beta}\cdot \frac{(2^m |e|)^{1-\alpha}}{(2^m|e|)^{\frac{1}{s}}}\asymp 2^{ m(1-\alpha-\frac{1}{s})}.
	\end{align*}
	Finally, we derive that
	\[
	D_{p,q}^\lambda(f)\lesssim \bigg(\sum_{m=-\infty}^0 2^{mq(1-\alpha-\frac{1}{s})}  + \sum_{m=1}^\infty \bigg(\frac{m^{1-\alpha}}{2^\frac{m}{s}}\bigg)^q \bigg)^{\frac{1}{q}}<\infty,
	\]
	for $0<q<\infty$ (note that also $D_{p,\infty}^\lambda(f)<\infty$).	On the other hand, it is clear that  $\|f\|_{L_{s',\infty}}=\infty$, since $f^*(t)\asymp t^{-\alpha}$ as $t\to\infty$.
\end{proof}

\begin{remark}
We note that the inequality
\[
\| \widehat{f}\|_{M_{p,q}^\lambda}\lesssim \| {f}\|_{L_{s',q}}, \qquad 0<q\le \infty, \qquad \frac{1}{s}=\frac{1}{p}-\frac{\lambda}{n},
\]
can be also obtained interpolating  both sides of
$\| \widehat{f}\|_{M_{p,\infty}^\lambda}  \lesssim\| f\|_{L_{s',\infty}}$
and using that
 		\begin{equation*}\label{12}
 			\big( M_{p,\infty}^{\lambda_0},  M_{p,\infty}^{\lambda_1} \big)_{\theta,q} \hookrightarrow   M_{p,q}^{\lambda},\qquad
 \lambda =(1-\theta)\lambda_0 + \theta \lambda_1,\quad \theta \in (0,1),
 		\end{equation*}
	provided $0<p <\infty$, $\theta \in (0,1)$, and $0<q\leq\infty$ (see  Lemma \ref{T1}).
\end{remark}

\subsection{The case of monotone functions}
We observe that under certain assumptions on $f$, the Lorentz norms $\| f\|_{L_{s',q}}$ reduce to  weighted $L_q$ norms. A function $g:(0,\infty)\to\C$ that is locally of bounded variation is said to be general monotone (written $g\in GM$, see \cite{nachr}) if there exist constants $C,\lambda>0$ such that
\[
\int_{x}^{2x} |dg(t)|\leq \frac{C}{x}\int_{x/\lambda}^{\lambda x}|g(t)|\, dt.
\]
\begin{corollary}\label{CORgm}
	Let a radial function $f(x)=f_0(|x|)$ be such that $f_0\in GM$. For $0<p<\infty$, $1\leq q\leq \infty$, and $\max\{0,\frac{n}{p}-\frac{n}{2}\}<\lambda<\frac{n}{p}$,  we have
	\begin{equation}
		\label{EQmorrey-infty}
	\| \widehat{f}\|_{M^\lambda_{p,q}}\lesssim \| |x|^{\frac{n}{p'}-\frac{n}{q}+\lambda} f\|_{L_q(\R^n)}.
	\end{equation}
\end{corollary}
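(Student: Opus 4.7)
The plan is to invoke Corollary~\ref{Cormorreynoweight} and then use the radial $GM$ structure of $f$ to convert the Lorentz norm into a weighted $L_q$ integral in the radial variable. Set $1/s = 1/p - \lambda/n$, so that $1/s' = 1/p' + \lambda/n$, and consequently $n/s' = n/p' + \lambda$. Since $\max\{0,n/p-n/2\}<\lambda<n/p$ places us in the range of Corollary~\ref{Cormorreynoweight}, we immediately get
\[
\|\widehat f\|_{M^\lambda_{p,q}} \lesssim \|f\|_{L_{s',q}(\R^n)}.
\]

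Next, since $f$ is radial with profile $f_0$, the identity $f^*(t)=f_0^*(\omega_n^{-1/n}t^{1/n})$, where $\omega_n=|B_1(0)|$, together with the change of variables $t=\omega_n r^n$ in the defining integral of the Lorentz norm gives
\[
\|f\|_{L_{s',q}(\R^n)} \asymp \bigg(\int_0^\infty \bigl(r^{n/s'} f_0^*(r)\bigr)^q \frac{dr}{r}\bigg)^{1/q}.
\]
Then I would use the $GM$ property to replace $f_0^*$ by $|f_0|$ itself: combining $|f_0(x)-f_0(t)|\le\int_x^t |df_0(u)|$ with the $GM$ condition yields the ``self-averaging'' bound $|f_0(x)|\lesssim x^{-1}\int_{x/\lambda}^{\lambda x}|f_0(t)|\,dt$, from which a standard dyadic/Hardy-type argument (valid for $q\ge 1$, see \cite{nachr}) produces the equivalence
\[
\bigg(\int_0^\infty \bigl(r^{n/s'} f_0^*(r)\bigr)^q \frac{dr}{r}\bigg)^{1/q} \asymp \bigg(\int_0^\infty \bigl(r^{n/s'} |f_0(r)|\bigr)^q \frac{dr}{r}\bigg)^{1/q}.
\]

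Finally, rewriting the right-hand side in Cartesian form via polar coordinates on $\R^n$,
\[
\bigg(\int_0^\infty \bigl(r^{n/s'} |f_0(r)|\bigr)^q \frac{dr}{r}\bigg)^{1/q} \asymp \||x|^{n/s'-n/q} f\|_{L_q(\R^n)} = \||x|^{n/p'-n/q+\lambda}f\|_{L_q(\R^n)},
\]
so that chaining the three equivalences above produces \eqref{EQmorrey-infty}. The main obstacle is the $GM$ step: the passage from $f_0^*$ to $|f_0|$ is not free, and requires the integral characterization of $GM$ functions together with a careful dyadic estimate (where the assumption $q\ge 1$ enters). The remaining ingredients, namely Corollary~\ref{Cormorreynoweight} and the polar coordinates identity, are essentially routine.
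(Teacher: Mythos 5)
Your proposal follows the same skeleton as the paper's proof: apply Corollary~\ref{Cormorreynoweight} to get $\|\widehat f\|_{M^\lambda_{p,q}}\lesssim\|f\|_{L_{s',q}}$, pass to the radial profile via $f^*(\omega_n t)=f_0^*(t^{1/n})$, and then use the $GM$ hypothesis to replace $f_0^*$ by $|f_0|$ in the weighted $L_q$ norm. The only divergence is in that last step: the paper simply cites \cite[Theorem 3]{bootonlorentz} for the equivalence $\|f\|_{L_{s',q}}\asymp\|t^{n/s'-1/q}f_0\|_{L_q(\R_+)}$ (valid for $q\geq1$, $s>1$, $f_0\in GM$), whereas you sketch a self-contained argument. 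Your sketch is in the right spirit, but as written it compresses the one genuinely delicate point: the local self-averaging bound $|f_0(x)|\lesssim x^{-1}\int_{x/\lambda}^{\lambda x}|f_0|$ does not by itself control $f_0^*$, because the right-hand side is not monotone in $x$. To compare with the rearrangement you first need a \emph{decreasing} majorant, e.g. sum the $GM$ condition dyadically (using that $f_0$ vanishes at infinity, which follows from finiteness of the right-hand side together with the averaging bound) to get $|f_0(x)|\lesssim\int_{cx}^\infty|f_0(u)|\,\frac{du}{u}$, whence $f_0^*(t)\lesssim\int_{ct}^\infty|f_0(u)|\,\frac{du}{u}$, and only then apply the weighted Hardy inequality (here $q\geq1$ and $n/s'>0$ enter). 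Note also that only this one direction of your claimed equivalence is actually needed; the reverse inequality is where the restriction $s>1$ in Booton's theorem plays its role. With that step made precise, your argument is a correct, essentially self-contained version of the paper's proof, at the cost of re-proving the relevant case of the cited rearrangement result.
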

\begin{proof}
We note that for all $0<p<\infty$ and $\max\{0,\frac{n}{p}-\frac{n}{2}\}<\lambda<\frac{n}{p}$, there holds
\[
0<\frac{1}{s}:= \frac{1}{p}-\frac{\lambda}{n}<\frac{1}{2}.
\]
By Corollary~\ref{Cormorreynoweight}, $\| \widehat{f}\|_{M^\lambda_{p,q}}\lesssim  \| f\|_{L_{s',q}}$. Taking into account that $f^*(t\omega_n)=f_0^*\big(t^{\frac{1}{n}} \big)$ with
$\omega_n$ being the volume of the unit ball in $\R^n$,
 it is straightforward to see that
\[
\| f\|_{L_{s',q}} \asymp \| f_0\|_{L_{\frac{s'}{n},q}}\asymp \| t^{\frac{n}{s'}-\frac{1}{q}} f_0\|_{L_q(\R_+)} \asymp \| |x|^{\frac{n}{p'}-\frac{n}{q}+\lambda} f\|_{L_q(\R^n)},
\]
where the second to last equivalence (valid for $q\geq 1$, $s>1$, and $f_0\in GM$) was obtained in \cite[Theorem 3]{bootonlorentz}.
\end{proof}

\begin{remark}
	Let $q=\infty$. We formally inspect the important limiting cases for $\lambda$ in Corollary~\ref{CORgm} corresponding to Lebesgue spaces (i.e., $\lambda=\frac{n}{p}$ or $\lambda=0$ with $p\geq 2$).
\begin{enumerate}[wide = 0pt]
	\item  For $\lambda=n/p$, \eqref{EQmorrey-infty} is
	\[
	\| \widehat{f}\|_{L_\infty}	\lesssim \| |x|^{n} f\|_{L_\infty}.
	\]
	This inequality is clearly not true, as shown by the example $f_0(t)=\chi_{(0,1)}(t)+t^{-n}\chi_{(1,N)}(t)$.
	\item For $p\geq 2$ and $\lambda=0$, \eqref{EQmorrey-infty} is
	\begin{equation}
	\label{EQmorrey-infty2}
	\| \widehat{f}\|_{L_p}\lesssim \sup_{t>0} \big(t^{\frac{n}{p'}}|f_0(t)| \big),
	\end{equation}
	which does not hold for any $n\in \N$. To see this, we recall the Hardy-Littlewood inequality \cite{DBoasHankel, GLT} (obtained under our assumptions), which reads as
	\begin{equation}
	\label{EQHLineq}
	\|\widehat{f}\|_{L_p}\asymp \bigg(\int_0^\infty t^{n(p-2)}|f_0(t)|^p\, dt\bigg)^{\frac{1}{p}},\qquad \frac{2n}{n+1}<p<\infty,
	\end{equation}
	Thus, the function $f$ with radial part $f_0(t)=t^{-\frac{n}{p'}}\chi_{(0,N)}(t)$ provides a  counterexample for \eqref{EQmorrey-infty2} (note that since $p\geq 2$, for every $n\in \N$, $p$ lies in the range given in \eqref{EQHLineq}).
\end{enumerate}
\end{remark}

\subsection{Inequalities in weighted Morrey spaces}

We now present a weighted version of Theorem~\ref{THMmorreynoweight}.

\begin{theorem}\label{THMmorreyweighted}
	Let $0< p<\infty$, $0<q\leq \infty$, and $u\in \Xi_{p,q}^n$ be such that $u(r)\asymp u(2r)$ for $r>0$.
Additionally, if $0< p<2$, we assume $r^{\frac{n}{p}-\frac{n}{2}}u(r)\in \Xi_{2,q}^n$.
Suppose that
	\[
	D_{p,q}^u(f):= \bigg( \sum_{m\in \Z} \bigg(\sup_{e\subset \Z^n} \frac{(1+\ln|e|)^{n+1}}{|e|^{\frac{1}{p}-\max\{0,\frac{1}{p}-\frac{1}{2}\}}}\cdot \frac{u\big(2^{-m}\big)}{(2^{mn})^\frac{1}{p}}\int_{\cup_{k\in e} Q_k^m} |f(x)|\, dx\bigg)^q\bigg)^\frac{1}{q} <\infty
	\]
	\textnormal{(}with the usual modification if $q=\infty$\textnormal{)}. Then $\widehat{f}\in M^u_{p,q}$, and moreover
	\[
	\| \widehat{f}\|_{M^u_{p,q}}\lesssim D_{p,q}^u (f).
	\]
\end{theorem}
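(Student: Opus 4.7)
The plan is to mimic the proof of Theorem~\ref{THMmorreynoweight}, replacing the homogeneous factor $2^{-m\lambda}$ by the general weight $u(2^m)$, and to use the hypothesis $u(r)\asymp u(2r)$ to justify the discretization. By Remark~\ref{REMcubes} (applicable because of the doubling assumption),
\[
\|\widehat{f}\|_{M_{p,q}^u}\asymp \bigg(\sum_{m\in\Z}\Big(u(2^m)\sup_{Q\in G^m}\|\widehat{f}\|_{L_p(Q)}\Big)^q\bigg)^{\frac1q},
\]
so the problem reduces to estimating $\sup_{k\in\Z^n}\|\widehat f\|_{L_p(Q_k^m)}$ uniformly in $k$ and assembling the outer $\ell_q$ sum against $u(2^m)$.

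For the case $p\ge 2$, the inner bound has already been established inside the proof of Theorem~\ref{THMmorreynoweight}, where it is shown (via the Hardy--Littlewood coefficient inequality, the convolution estimate $|a_{-r}|\lesssim (b\ast c)_r$ with $b$, $c$ as in \eqref{EQb}, and Lemmas~\ref{LEMdualnormlorentz}--\ref{LEMsuminverses}) that
\[
\|\widehat f\|_{L_p(Q_k^m)}\lesssim 2^{\frac{mn}{p}}\sup_{\nu\in\N}(1+\ln\nu)^{n+1}\nu^{\frac{1}{p'}}b_\nu^{**},
\qquad b_k=\int_{Q_k^{-m}}|f(x)|\,dx.
\]
Using \eqref{EQb**} to rewrite $\nu^{1/p'}b_\nu^{**}=\nu^{-1/p}\sup_{|e|=\nu}\int_{\cup_{k\in e}Q_k^{-m}}|f|$, multiplying by $u(2^m)$, taking the $\ell_q(\Z)$-sum over $m$, and reindexing $m\mapsto -m$, one arrives at
\[
\|\widehat f\|_{M_{p,q}^u}\lesssim \bigg(\sum_{m\in\Z}\bigg(\sup_{e\subset\Z^n}\frac{(1+\ln|e|)^{n+1}}{|e|^{1/p}}\cdot\frac{u(2^{-m})}{2^{mn/p}}\int_{\cup_{k\in e}Q_k^m}|f(x)|\,dx\bigg)^q\bigg)^{\frac1q},
\]
which is exactly $D_{p,q}^u(f)$ in this range (note $\max\{0,\tfrac1p-\tfrac12\}=0$).

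For the case $0<p<2$, I would reduce to the previous step by means of a weighted Morrey embedding. Setting $u_\beta(r):=r^{\frac{n}{p}-\frac{n}{2}}u(r)$ (which by assumption belongs to $\Xi_{2,q}^{n}$), H\"older's inequality gives, for each ball $B_r(x)$,
\[
u(r)\|f\|_{L_p(B_r(x))}\lesssim u(r)\,r^{\frac{n}{p}-\frac{n}{2}}\|f\|_{L_2(B_r(x))}=u_\beta(r)\|f\|_{L_2(B_r(x))},
\]
yielding the embedding $M_{2,q}^{u_\beta}\hookrightarrow M_{p,q}^{u}$. Applying the result of the previous step to $M_{2,q}^{u_\beta}$ produces the quantity $D_{2,q}^{u_\beta}(f)$; since
\[
\frac{u_\beta(2^{-m})}{2^{mn/2}}=\frac{u(2^{-m})}{2^{mn/p}}\quad\text{and}\quad \frac{1}{|e|^{1/2}}=\frac{1}{|e|^{1/p-(1/p-1/2)}},
\]
this quantity coincides with $D_{p,q}^u(f)$ in the range $0<p<2$, completing the proof.

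The main obstacle to watch is the matching of weights and exponents after the reindexing $m\mapsto -m$ and, in the second step, the verification that passing from $p=2$ with $u_\beta$ to general $0<p<2$ with $u$ yields exactly the functional $D_{p,q}^u(f)$ as written. Everything else, including the doubling-based discretization and the convolution/coefficient estimates, is a routine transcription of the unweighted argument.
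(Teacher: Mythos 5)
Your proposal is correct and follows essentially the same route as the paper: the same cube discretization via Remark~\ref{REMcubes} under the doubling hypothesis, reuse of the bound $\|\widehat f\|_{L_p(Q_k^m)}\lesssim 2^{mn/p}\sup_\nu(1+\ln\nu)^{n+1}\nu^{1/p'}b_\nu^{**}$ from the proof of Theorem~\ref{THMmorreynoweight} together with \eqref{EQb**} and the reindexing $m\mapsto -m$, and for $0<p<2$ the H\"older embedding $M_{2,q}^{v}\hookrightarrow M_{p,q}^{u}$ with $v(r)=r^{\frac np-\frac n2}u(r)$ and the identity $D_{2,q}^{v}(f)=D_{p,q}^{u}(f)$.
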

By \eqref{EQsupremumequiv}, it is clear that Theorem~\ref{THMmorreyweighted} with $u(r)=r^{-\lambda}$, $\max\{0,\frac{n}{p}-\frac{n}{2}\}<\lambda \leq\frac{n}{p}$, reduces to Theorem~\ref{THMmorreynoweight}.
\begin{proof}[Proof of Theorem~\ref{THMmorreyweighted}]
	We prove in detail only the case $q<\infty$; the case $q=\infty$ is treated similarly. Let first $p\geq 2$. As in the proof of Theorem~\ref{THMmorreynoweight}, we have $\displaystyle\| \widehat{f}\|_{L_p (Q_k^m)} \lesssim  2^\frac{mn}{p}\sup_{\nu \in \N}(1+\ln \nu)^{n+1}\nu^\frac{1}{p'}b_\nu^{**}$, 	 where $b_k$ is defined as in \eqref{EQb}. Further, since $u(r)\asymp u(2r)$ for $r>0$, we have
	\[
	\| f\|_{M^u_{p,q}}\lesssim \bigg(\sum_{m\in\Z} \Big(u(2^m)\sup_{k\in \Z}\| f\|_{L_p(Q_k^m)} \Big)^q\bigg)^\frac{1}{q}  \lesssim \bigg(\sum_{m\in\Z} \Big(u(2^m) 2^\frac{mn}{p}\sup_{\nu \in \N}(1+\ln \nu)^{n+1}\nu^\frac{1}{p'}b_\nu^{**}   \Big)^q\bigg)^\frac{1}{q}
	\]
	(see Remark~\ref{REMcubes} for the first inequality). Finally, by  \eqref{EQb**}, we conclude that
	\begin{align*}
	\|\widehat{f}\|_{M^u_{p,q}}&\lesssim \bigg(\sum_{m\in\Z} \bigg(u(2^m) 2^\frac{mn}{p}\sup_{\nu \in \N}(1+\ln \nu)^{n+1}\nu^\frac{1}{p'}\cdot \frac{1}{\nu} \sup_{\substack{e\subset \Z^n\\|e|=\nu}} \int_{\cup_{k\in e} Q_k^{-m}}|f(x)|\, dx   \bigg)^q\bigg)^\frac{1}{q}\\
	&= \bigg( \sum_{m\in \Z} \bigg(\sup_{e\subset \Z^n} \frac{(1+\ln|e|)^{n+1}}{|e|^{\frac{1}{p}}}\cdot \frac{u\big(2^{-m}\big)}{(2^{mn})^\frac{1}{p}}\int_{\cup_{k\in e} Q_k^m} |f(x)|\, dx\bigg)^q\bigg)^\frac{1}{q}  = D_{p,q}^u(f).
	\end{align*}
	Let now $0<p<2$. As in the first part of Lemma~\ref{LEMembeddingsmorrey}, a simple application of H\"older's inequality shows that $M_{2,q}^v\hookrightarrow M_{p,q}^u$, with
	$
	v(r)= r^{ \frac{n}{p}-\frac{n}{2}}u(r),
	$
	so that $\| \widehat{f}\|_{M_{p,q}^u}\lesssim \|\widehat{f}\|_{M_{2,q}^v}\lesssim D_{2,q}^v(f)=D_{p,q}^u(f)$, as desired.
\end{proof}

Analogously as in Corollary~\ref{Cormorreynoweight}, we can estimate $D_{p,q}^u(f)$ in Theorem~\ref{THMmorreyweighted} by a weighted Lorentz norm. Given $0<q\leq\infty$ and a measurable function $v:(0,\infty)\to (0,\infty)$, we define the weighted Lorentz space $\Gamma_q^v(\R^n)=\Gamma_q^v$ by
\[
\| f\|_{\Gamma_q^v}:= \| v f^{**}\|_{L_q(0,\infty)},
\]
see \cite{Sawyer}.
\begin{corollary}\label{CORweightedlorentz}
	Let $0< p<\infty$, $0<q\leq \infty$, and let $u:(0,\infty)\to(0,\infty)$ be such that $u(r)\asymp u(2r)$ for $r>0$. Assume, moreover, that there exists $\alpha>\max\{0,\frac{n}{p}-\frac{n}{2}\}$ such that $x^\alpha u(x)$ is almost decreasing, that is,
	\begin{equation}
		\label{EQlorentzweightcond}
		x^\alpha u(x)\lesssim y^\alpha u(y),\qquad \text{for all }y<x.
	\end{equation}
	Then,
	\[
	\| \widehat{f}\|_{M^u_{p,q}}  \lesssim \| f\|_{\Gamma_q^v},
	\]
	where $v(x)=u\big( x^{-\frac{1}{n}}\big)x^{\frac{1}{p'}-\frac{1}{q}}$.
\end{corollary}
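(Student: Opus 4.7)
The plan is to apply Theorem~\ref{THMmorreyweighted} and reduce the problem to establishing $D_{p,q}^u(f) \lesssim \|f\|_{\Gamma_q^v}$; the argument closely parallels the derivation of Corollary~\ref{Cormorreynoweight}, adapting the rearrangement-type substitutions to accommodate a general weight $u$. Throughout, it is useful to note that $v(x)^q f^{**}(x)^q\, dx = [u(x^{-1/n})x^{1/p'}f^{**}(x)]^q\, \frac{dx}{x}$, so introducing $F(y):=u(y^{-1/n})y^{1/p'}f^{**}(y)$ one has $\|f\|_{\Gamma_q^v}^q = \int_0^\infty F(y)^q\,\frac{dy}{y}$ (with the supremum replacing the integral when $q=\infty$).

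For fixed $m\in\Z$ and $e\subset\Z^n$ with $N=|e|\geq 1$, the elementary bound $\int_{\cup_{k\in e} Q_k^m}|f|\leq 2^{mn}N\, f^{**}(2^{mn}N)$, combined with the prefactor, gives, setting $\mu:=\min\{1/p,1/2\}$ (so $1/p-\mu=\max\{0,1/p-1/2\}$) and $y=2^{mn}N$,
\[
\frac{(1+\ln N)^{n+1}}{N^\mu}\cdot\frac{u(2^{-m})}{2^{mn/p}}\int_{\cup_{k\in e}Q_k^m}|f|\,dx \lesssim (1+\ln N)^{n+1} N^{1/p-\mu} u(2^{-m})\, y^{1/p'} f^{**}(y).
\]
The crucial step is then to apply the almost-decreasing hypothesis on $x^\alpha u(x)$ with $y_0=y^{-1/n}=2^{-m}N^{-1/n}\leq 2^{-m}=x$ (valid since $N\geq 1$), which yields $u(2^{-m})\lesssim N^{-\alpha/n}u(y^{-1/n})$. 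With $\sigma:=\alpha/n-\max\{0,1/p-1/2\}>0$ by hypothesis, the right-hand side above is dominated by $(1+\ln N)^{n+1}N^{-\sigma}F(y)$.

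A dyadic decomposition $N\in[2^t,2^{t+1})$, $t\geq 0$, together with the doubling $u(r)\asymp u(2r)$ and the monotonicity of $f^{**}$, gives $\sup_{N\in[2^t,2^{t+1})} F(2^{mn}N)\lesssim F(2^{mn+t})$, whence
\[
\sup_{e\subset\Z^n}\frac{(1+\ln|e|)^{n+1}}{|e|^\mu}\cdot\frac{u(2^{-m})}{2^{mn/p}}\int_{\cup_{k\in e}Q_k^m}|f|\,dx \lesssim \sum_{t=0}^\infty (1+t)^{n+1} 2^{-t\sigma} F(2^{mn+t}).
\]
To sum in $\ell^q$ over $m$, Minkowski's inequality (when $q\geq 1$), the $q$-inequality $(\sum a_i)^q\leq \sum a_i^q$ (when $0<q<1$), or the supremum (when $q=\infty$) reduces the task to bounding $\sum_{m\in\Z} F(2^{mn+t})^q$ uniformly in $t$. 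Running the doubling/monotonicity comparison in reverse gives $F(2^{mn+t})^q\lesssim \int_{2^{mn+t-1}}^{2^{mn+t}} F(y)^q\,\frac{dy}{y}$, and since these intervals are pairwise disjoint for different $m\in\Z$ (tiling $(0,\infty)$ exactly when $n=1$ and with gaps when $n\geq 2$), the sum is controlled by $\int_0^\infty F(y)^q\,\frac{dy}{y}=\|f\|_{\Gamma_q^v}^q$. The summability of $(1+t)^{n+1}2^{-t\sigma}$ (resp.\ boundedness when $q=\infty$) then closes the estimate.

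The main delicate point I anticipate is ensuring $\sigma>0$: the polynomial factor $N^{\max\{0,1/p-1/2\}}$ emerging from the Hardy--Littlewood step in Theorem~\ref{THMmorreyweighted} must be absorbed by the decay coming from $u$, and this is precisely why the hypothesis is phrased as $x^\alpha u(x)$ being almost decreasing for some $\alpha>\max\{0,n/p-n/2\}$, rather than as direct monotonicity of $u$. The other care points—compatibility of the supremum over $N\in\N$ with the continuous parameter $y$, and the interchange of suprema with the doubling of $u$ on dyadic intervals—are routine, as is the verification that the case $q=\infty$ follows by the same argument with sums replaced by suprema.
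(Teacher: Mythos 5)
Your proposal is correct and follows essentially the same route as the paper's proof: apply Theorem~\ref{THMmorreyweighted}, bound the integral by $|E|f^{**}(|E|)$, use the almost-decreasing hypothesis to trade $u(2^{-m})$ for $|e|^{-\alpha/n}u\big((2^{mn}|e|)^{-1/n}\big)$, and then dyadically decompose in $|e|$ and convert point values of $u(x^{-1/n})x^{1/p'}f^{**}(x)$ into integrals over disjoint blocks via doubling and the monotonicity of $f^{**}$. The only (cosmetic) deviation is that you replace the supremum over dyadic blocks $t$ by a sum before taking $q$-th powers, requiring Minkowski for $q\ge 1$ and $q$-subadditivity for $q<1$, whereas the paper simply uses $(\sup_t a_t)^q\le\sum_t a_t^q$, which avoids the case distinction.
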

\begin{proof}
	The proof is similar to that of Corollary~\ref{Cormorreynoweight}. Thus, we only sketch the steps where it is slightly different.
It follows from \eqref{EQlorentzweightcond} that
$ u(x)\lesssim \kappa^\alpha u(\kappa x)$ for any $0<\kappa<1$.
Then for fixed $m\in \Z$ and $e\subset \Z^n$,
	\[
	u(2^{-m})\lesssim |e|^{-\frac{\alpha}{n}} u\big(2^{ -m}|e|^{-\frac{1}{n}}\big)= |e|^{-\frac{\alpha}{n}} u\big((2^{mn}|e|)^{-\frac{1}{n}} \big).
	\]
	Taking into account that $2^{mn}|e|=\big| \cup_{k\in e} Q_k^{m}\big|$, we get, by setting $\rho=\max\{0,\frac{n}{p}-\frac{n}{2}\}$,
	\begin{align*}
		&\phantom{=}\sup_{e\subset \Z^n}\frac{(1+\ln|e|)^{n+1}}{|e|^{\frac{1}{p}-\max\{0,\frac{1}{p}-\frac{1}{2}\}}}\cdot \frac{u\big(2^{-m}\big)}{(2^{mn})^\frac{1}{p}}\int_{\cup_{k\in e} Q_k^m} |f(x)|\, dx\\
		&\lesssim  \sup_{e\subset \Z^n}\frac{(1+\ln |e|)^{n+1}}{|e|^{\frac{\alpha-\rho}{n}}} u\big((2^{mn}|e|)^{-\frac{1}{n}} \big) (2^{mn}|e|)^{\frac{1}{p'}}f^{**}(2^{mn}|e|).
	\end{align*}
	Thus, for $q=\infty$, the result immediately follows from the last estimate and Theorem~\ref{THMmorreyweighted} (see also \eqref{EQsupremumequiv}). On the other hand, for $q<\infty$, we derive that
	\begin{align*}
		D_{p,q}^u(f)&\lesssim  \bigg( \sum_{m\in \Z}\bigg( \sup_{t\geq 0} \sup_{2^t\leq \ell<2^{t+1}} \frac{(1+\ln \ell)^{n+1}}{ \ell^\frac{\alpha-\rho}{n}} u\big((2^{mn}\ell)^{-\frac{1}{n}} \big) (2^{mn} \ell)^{\frac{1}{p'}}f^{**}(2^{mn} \ell) \bigg)^q\bigg)^\frac{1}{q}\\
		&\lesssim  \bigg( \sum_{m\in \Z} \sum_{t=0}^\infty \bigg(\frac{(1+t)^{n+1}}{  2^{t\frac{ \alpha-\rho}{n}}}  \bigg)^q \int_{2^{(m-1)n+t}}^{2^{mn+t}} \Big(u\big(x^{-\frac{1}{n}}\big) x^{\frac{1}{p'}}f^{**}(x) \Big)^q\frac{dx}{x}\bigg)^\frac{1}{q} \asymp \| f\|_{\Gamma_q^v},
	\end{align*}
	since $u(r)\asymp u(2r)$ for $r>0$. The conclusion follows from Theorem~\ref{THMmorreyweighted}.
\end{proof}

It is clear that for $u(r)=r^{-\lambda}$ and $1\leq q\leq \infty$ Corollary~\ref{CORweightedlorentz} corresponds to Corollary~\ref{Cormorreynoweight}. Indeed, in this case $M_{p,q}^u=M_{p,q}^\lambda$ and
\[
\| f\|_{\Gamma_q^v} = \bigg( \int_0^\infty \Big(x^{\frac{\lambda}{n}+\frac{1}{p'}} f^{**}(x)\Big)^q\frac{dx}{x} \bigg)^\frac{1}{q} =\| f\|_{L_{s',q}}, \quad
  \quad \frac{1}{s}= \frac{1}{p}-\frac{\lambda}{n},
\]
where the condition $2<s<\infty$ follows from \eqref{EQlorentzweightcond}.

\vspace{0.4mm}
\section{Pitt's inequalities between Morrey and Lebesgue spaces}\label{section5}
The classical Pitt inequality states that, for $1<p\leq \xi<\infty$,
	\begin{equation}\label{pitt-cl}
	\| |x|^{-\delta} \widehat{f}\|_{L_{\xi}}\lesssim \| |x|^\gamma f\|_{L_p}
		\end{equation}
	holds if and only if
	\begin{equation}
		\label{EQclasspittconds}
		\max\bigg\{ 0, n\bigg( \frac{1}{\xi}-\frac{1}{p'} \bigg)  \bigg\}\leq \delta<\frac{n}{\xi}, \qquad
		\gamma =\delta+n\bigg(\frac{1}{p'}-\frac{1}{\xi}\bigg),
	\end{equation}
	see, e.g., \cite{BH,GLTPitt}. Its analogue for weighted Morrey norms does not hold in general, see \eqref{EQweightedmorreymorrey} in Appendix.

 In this section we obtain {\it weighted} Fourier inequalities between   Morrey and Lebesgue spaces, generalizing the classical Pitt inequality. Our proof is based on the combination of (\ref{pitt-cl}) and  the embedding results for Morrey spaces, see e.g. \cite[Theorem 3.12]{HS}.

\begin{theorem}\label{CORpittmorrey-fullrange}
	Let $1<p<\infty$, $0<q<\infty$, and $0\leq\nu<\frac{n}{q}$ be such that
	\[
	\frac{nq}{n-\nu q}\geq p,\qquad  \text{or equivalently}, \qquad \frac{1}{p}\geq \frac{1}{q}-\frac{\nu}{n}.
	\]
	Then the inequality
	\begin{equation}
		\label{EQpittembedding}
		\| |x|^{-\delta} \widehat{f}\|_{M_{q}^{\nu}}\lesssim \| |x|^\gamma f\|_{L_p}
	\end{equation}
	holds if and only if

	\begin{equation}
		\label{EQparamscondpitt}
	\max\bigg\{ 0, n\bigg(\frac{1}{q} -\frac{1}{p'} \bigg)-\nu \bigg\}\leq \delta<\frac{n}{q}-\nu, \qquad
	\gamma =\delta+n\bigg(\frac{1}{p'}-\frac{1}{q}\bigg)+\nu.
		\end{equation}
\end{theorem}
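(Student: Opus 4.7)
The plan is to deduce this inequality by combining the classical (Lebesgue) Pitt inequality \eqref{pitt-cl} with the Morrey--Lorentz embedding $L_\xi \hookrightarrow M_q^\nu$ from Lemma~\ref{LEMmorrey-lorentzembedding}, where the auxiliary index $\xi$ is defined by $1/\xi = 1/q - \nu/n$. The standing assumption $1/p \geq 1/q - \nu/n$ is precisely $p \leq \xi$, which places the exponents in the range of validity of classical Pitt, namely $1 < p \leq \xi < \infty$.

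For sufficiency, the embedding and \eqref{pitt-cl} with $s = \xi$ give the chain $\||x|^{-\delta}\widehat{f}\|_{M_q^\nu} \lesssim \||x|^{-\delta}\widehat{f}\|_{L_\xi} \lesssim \||x|^\gamma f\|_{L_p}$, valid under the classical Pitt conditions \eqref{EQclasspittconds}. Substituting $1/\xi = 1/q - \nu/n$ then converts those conditions verbatim into \eqref{EQparamscondpitt}: $n/\xi = n/q - \nu$, $n(1/\xi - 1/p') = n(1/q - 1/p') - \nu$, and $\gamma = \delta + n(1/p' - 1/\xi) = \delta + n(1/p' - 1/q) + \nu$.

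For necessity, I would argue via homogeneity and test functions. First, applying the inequality to the dilated function $f_t(x) = f(tx)$ and using the scalings $\||x|^{-\delta}\widehat{f_t}\|_{M_q^\nu} = t^{-n-\delta+n/q-\nu}\||x|^{-\delta}\widehat{f}\|_{M_q^\nu}$ and $\||x|^\gamma f_t\|_{L_p} = t^{-\gamma-n/p}\||x|^\gamma f\|_{L_p}$, matching exponents of $t$ forces the stated relation $\gamma = \delta + n(1/p'-1/q) + \nu$. Next, the upper bound $\delta < n/q - \nu$ is verified by testing with a Schwartz $f$ with $\widehat{f}(0) \neq 0$: since $|x|^{-\delta}\widehat{f}(x) \sim |x|^{-\delta}$ near the origin, the quantity $r^{-\nu}\||x|^{-\delta}\widehat{f}\|_{L_q(B_r(0))}$ is of order $r^{n/q - \delta - \nu}$, which blows up as $r \to 0$ once $\delta$ reaches the endpoint, in the spirit of the endpoint failure of classical Pitt. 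Finally, the lower bound $\delta \geq 0$ follows by plugging in a modulated Gaussian $f(x) = e^{-\pi|x|^2}e^{2\pi i x\cdot x_0}$, whose Fourier transform is a Gaussian centered at $x_0$: the LHS contributes a factor $|x_0|^{-\delta}$ while the RHS remains bounded uniformly in $x_0$.

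The main obstacle will be establishing the lower bound $\delta \geq n(1/q - 1/p') - \nu$ in the regime $p' > \xi$, where this is a nontrivial constraint. Because $L_\xi \hookrightarrow M_q^\nu$ presents the Morrey norm as the \emph{weaker} of the two, the sharpness of classical Pitt does not transfer for free. I would handle this by exhibiting a family of radially decreasing test functions $f$ (for instance, with $\widehat{f}(x) \asymp |x|^{-n/\xi}$ supported on a dyadic shell, with suitable smoothing and truncation) for which the Morrey and $L_\xi$ norms of $|x|^{-\delta}\widehat{f}$ are equivalent; on such a class the classical Pitt necessity forces the required lower bound on $\delta$.
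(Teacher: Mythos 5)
Your sufficiency argument and the homogeneity derivation of $\gamma=\delta+n(\frac{1}{p'}-\frac{1}{q})+\nu$ coincide with the paper's, and your modulated-Gaussian test for $\delta\geq 0$ is a sound variant of the paper's modulated characteristic function. However, there are two genuine gaps in the necessity part. First, your Schwartz-function test only rules out $\delta>\frac{n}{q}-\nu$: at the endpoint $\delta=\frac{n}{q}-\nu$ the quantity $r^{-\nu}\,\||x|^{-\delta}\widehat f\|_{L_q(B_r(0))}\asymp r^{\frac{n}{q}-\delta-\nu}=1$ stays bounded as $r\to0$, and for a Gaussian both sides of \eqref{EQpittembedding} are then finite, so no contradiction arises and the required strict inequality is not established. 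The paper excludes the endpoint with a logarithmically refined test function $f(x)=|x|^{-\gamma-\frac{n}{p}}|\log|x||^{-1}\chi_{B_{1/2\pi}(0)}(x)$, for which $\||x|^\gamma f\|_{L_p}<\infty$ while $\|\widehat f\|_{L_q(B_1(0))}=\infty$ precisely when $\gamma\geq \frac{n}{p'}$ (equivalently $\delta\geq\frac{n}{q}-\nu$); some refinement of this kind is unavoidable.

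Second, and more seriously, your plan for the bound $\delta\geq n(\frac{1}{q}-\frac{1}{p'})-\nu$ (equivalently $\gamma\geq0$) does not work as described. The family you propose, radial $f$ with $\widehat f\asymp|x|^{-n/\xi}$ on a dyadic shell, is a dilation family: taking $\widehat f_R(x)=R^{-n/\xi}\phi(x/R)$ one computes $\||x|^{-\delta}\widehat f_R\|_{M_{q}^{\nu}}\asymp R^{-\delta}$ and $\||x|^\gamma f_R\|_{L_p}\asymp R^{\,n(1-\frac{1}{\xi})-\gamma-\frac{n}{p}}$, so testing \eqref{EQpittembedding} on it merely reproduces the homogeneity identity you already derived and yields no inequality constraint on $\gamma$. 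Moreover, the principle that equivalence of the Morrey and $L_\xi$ norms on some class lets you invoke "classical Pitt necessity" is not valid: the necessity of \eqref{EQclasspittconds} is witnessed by specific counterexample families, and you would have to verify that those counterexamples (not your radial family) remain effective after the passage to the weaker Morrey norm. What actually works, and is the paper's argument, is a translation construction: take $G_N(x)=\prod_{j=1}^n\chi_{(N,N+1)}(x_j)$, so that $|\widehat G_N|$ is independent of $N$; then $\||x|^{-\delta}\widehat G_N\|_{M_{q}^{\nu}}\gtrsim1$ (using the already established $\delta\geq0$), while $\||x|^\gamma G_N\|_{L_p}\asymp N^\gamma$, and letting $N\to\infty$ forces $\gamma\geq0$.
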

It is clear that for $\nu=0$ we recover the  classical Pitt inequality (\ref{pitt-cl}) and
for $\delta=\gamma=0$  estimate (\ref{EQMorrey-Lorentzembedding}).

\begin{proof}
We recall that, for   $0\leq \nu<\frac{n}{q}$ and $0<q,\xi<\infty$, the inequality
	\[
	\| g\|_{M_{q}^{\nu}}\lesssim \| g\|_{L_\xi}
	\]
	holds if and only if
	\[
	\xi= \frac{nq}{n-\nu q}, \qquad \text{or equivalently}, \qquad \frac{1}{\xi}= \frac{1}{q}-\frac{\nu}{n}.
	\]
	In particular, we note that $q\leq \xi$. Applying this result and using Pitt's inequality (\ref{pitt-cl}), we get
	\[
	\| |x|^{-\delta} \widehat{f}\|_{M_{q}^{\nu}}\lesssim 	\| |x|^{-\delta} \widehat{f}\|_{L_\xi}\lesssim  \| |x|^\gamma f\|_{L_p},
	\]
under the condition
	$
	\frac{1}{\xi}= \frac{1}{q}-\frac{\nu}{n}
	$
and
\eqref{EQclasspittconds}.

	Let us now prove the necessity part. The second condition in \eqref{EQparamscondpitt} easily follows by  homogeneity. Indeed, for an integrable $f$ with $\| |x|^\gamma f\|_{L_p} <\infty$, take $f_\lambda(x)=f(\lambda x)$ with positive $\lambda.$ Then by \eqref{EQparamscondpitt},
	\begin{align*}
	\lambda^{-n-\nu-\delta+\frac{n}{q}} \| |y|^{-\delta}\widehat{f}\|_{L_q(B_1(0))}&\asymp \lambda^{-\nu} \||y|^{-\delta}\widehat{f_\lambda}\|_{L_q(B_\lambda(0))} \leq \||y|^{-\delta}\widehat{f_\lambda}\|_{M_q^\nu}\\
	&\lesssim \| |x|^\gamma f_\lambda\|_{L_p} \asymp \lambda^{-\frac{n}{p}-\gamma}\| |x|^\gamma f\|_{L_p}.
	\end{align*}
and hence, $\gamma=\delta+n(\frac{1}{p'}-\frac{1}{q})+\nu.$

	We now show that \eqref{EQpittembedding} implies $\max\big\{ 0, n\big(\frac{1}{q} -\frac{1}{p'} \big)-\nu \big\}\leq \delta$. Let, for $N>0$ and $t\in \R$, $f_N(x)=e^{2\pi iNx}\chi_{(1,2)}(x)$, and
 $
	\displaystyle F_N(x) = \prod_{j=1}^n f_N(x_j),$ $x\in \R^n.
	$
	Clearly, $\| |x|^\gamma F_N\|_{L_{p}}\asymp 1$ for any $\gamma\in \R$. On the other hand, since
$ \displaystyle
	\widehat{f}_N(s)=\frac{e^{-2\pi i(s-N)}(1-e^{-2\pi i(s-N)})}{2\pi i(s-N)}$ and $ \displaystyle\widehat{F}_N(x) = \prod_{j=1}^n \widehat{f}_N(x_j),
	$
 we get
	\begin{align*}
		\| |x|^{-\delta} \widehat{F}_N \|_{M_q^\nu}&\gtrsim \bigg( \int_{|x_1-N|\leq \frac1{2\pi}}
 \int_{|x_2-N|\leq \frac1{2\pi}}\cdots \int_{|x_N-N|\leq \frac1{2\pi}} |x|^{-\delta q}\prod_{j=1}^n \bigg(\frac{|1-e^{-2\pi i(x_j-N)}|}{2\pi|x_j-N|}\bigg)^q dx  \bigg)^{1/q}\\
		&\asymp N^{-\delta}.
	\end{align*}
	Putting all estimates together, \eqref{EQpittembedding} implies
	$N^{-\delta}\lesssim1,$ that is, $\delta\geq 0$.
	
Further, we observe that the condition  $\delta\geq n\big(\frac{1}{q}-\frac{1}{p'}\big)-\nu$ is equivalent to $\gamma\geq 0$. Consider, for $N>1$ and $t\in \R$, $g_N(t)=\chi_{(N,N+1)}(t)$, and
	$ \displaystyle
	G_N(x)=\prod_{j=1}^n g_N(x_j),$ $ x\in \R^n.
	$
	In this case \eqref{EQpittembedding} implies $\displaystyle 1\lesssim \| |x|^{-\delta} \widehat{G}_N \|_{M_q^\nu}\lesssim \| |x|^\gamma G_N\|_{L_p}\asymp N^\gamma$. Thus, $\gamma\geq 0$.
	
	Finally, we prove that $\delta<\frac{n}{q}-\nu$. Let
	\[
	f(x)=\frac{|x|^{-\gamma-\frac{n}{p}}}{|\log |x||}\chi_{B_{\frac{1}{ 2\pi
}}(0)}(x).
	\]
	It is clear that $\| |x|^\gamma f\|_{L_p}<\infty$ for $p>1$. Since $f$ is a nonnegative function supported on $B_{\frac{1}{2\pi }}(0)$, we have, for $|y|\leq 1$,
	\[
	|\widehat{f}(y)|\geq \bigg|\int_{B_{\frac{1}{2\pi }}(0)} f(x)\cos (2\pi (x, y))\, dx\bigg| \asymp \int_{B_{\frac{1}{2\pi }}(0)} \frac{|x|^{-\gamma-\frac{n}{p}}}{|\log |x||}\, dx \asymp \int_0^{1}
  \frac{s^{n-1-\gamma-\frac{n}{p}}}{|\log s|}\, ds.
	\]
	Thus, it follows from \eqref{EQpittembedding} and the condition $\delta\geq0$ that
	\[
	\infty >\| |x|^\gamma f\|_{L_p}  \gtrsim \| |x|^{-\delta}\widehat{f}\|_{M^\nu_q} \geq \| \widehat{f}\|_{L_q(B_1(0))}\asymp \int_0^{1}  \frac{s^{n-1-\gamma-\frac{n}{p}}}{|\log s|}\, ds.
	\]
Therefore, we have $\gamma<\frac{n}{p'}$, {or equivalently,} $\delta<\frac{n}{q}-\nu.$
\end{proof}

\begin{remark}
We do not study in this paper Morrey--Lorentz analogues of Pitt's inequality \eqref{EQpittembedding}, see \cite{NDT}.
\end{remark}

	\section{Fourier inequalities in Campanato spaces}\label{SECfouriercampanato}

The  main result in this section is the following.

\begin{theorem}\label{THMcampanatoweighted}
	Let $1\leq p<\infty$, $0<q\leq \infty$,  and let  $v,w:(0,\infty)\to (0,\infty)$ be weight functions satisfying $v(r)\asymp v(2r)$, $w(r)\asymp w(2r)$  and such that the estimates
	\begin{equation}
		\label{EQweightedcampanato}
		\sup_{r>0} r^\frac{n}{p} w(r)v(1/r)<\infty,
	\end{equation}
	and
	\begin{equation}
	\label{EQweightedcampanatocond2-}
	\sum_{k=-\infty}^m 2^{k} v(2^k)\lesssim  2^{m}v(2^m),\qquad \sum_{k=m}^\infty  v(2^{k})\lesssim v(2^m),\qquad m\in \Z,
	\end{equation}
	hold. Then, the inequality
\begin{equation}
	\label{loctrMorrey}
	| \widehat{f}|_{C_{p,q}^w} \lesssim  \bigg(\sum_{k\in \Z} \bigg( \frac{1}{v(2^k)} \int_{B_{2^{k+1}}(0)\backslash B_{2^k}(0)}|f(x)|\, dx\bigg)^q\bigg)^\frac{1}{q},
	\end{equation}
	holds \textnormal{(}with the usual modification for $q=\infty$\textnormal{)}.
\end{theorem}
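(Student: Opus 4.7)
The plan is to reduce the statement to a kernel estimate plus two applications of the discrete Hardy inequalities (Lemma~\ref{LEMhardyseries}), using the weighted analogue of the discretisation identity \eqref{gg}. First, since $w(r)\asymp w(2r)$, the argument of Lemma~\ref{lem3.7} goes through verbatim and gives
\[
| \widehat f|_{C_{p,q}^w}\asymp \bigg(\sum_{k\in\Z}\Big(w(2^k)\,\sup_{x\in\R^n}\|\widehat f-A_{2^k}\widehat f(x)\|_{L_p(B_{2^k}(x))}\Big)^q\bigg)^{1/q}.
\]
Thus it suffices to estimate, for each dyadic scale $r=2^k$, the quantity $\sup_{x}\|\widehat f-A_r\widehat f(x)\|_{L_p(B_r(x))}$.

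The next step is the kernel estimate. Writing $y=x+z$ with $z\in B_r(0)$ and using Fubini,
\[
\widehat f(x+z)-A_r\widehat f(x)=\int_{\R^n}f(\xi)\,e^{-2\pi i(x,\xi)}\Big[e^{-2\pi i(z,\xi)}-\phi_r(\xi)\Big]\,d\xi,\qquad \phi_r(\xi):=\frac{\widehat{\chi_{B_r(0)}}(\xi)}{|B_r|}.
\]
Since $\phi_r(0)=1$ and $\phi_r$ is smooth, a first-order Taylor bound yields $|e^{-2\pi i(z,\xi)}-\phi_r(\xi)|\lesssim \min(r|\xi|,1)$ uniformly in $z\in B_r(0)$. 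Decomposing $f=\sum_{j\in\Z}f_j$ with $f_j=f\chi_{B_{2^{j+1}}(0)\setminus B_{2^j}(0)}$ and setting $I_j:=\int_{B_{2^{j+1}}(0)\setminus B_{2^j}(0)}|f(\xi)|\,d\xi$, we obtain the pointwise estimate
\[
\big|\widehat f_j(x+z)-A_r\widehat f_j(x)\big|\lesssim \min(r\cdot 2^j,1)\,I_j,
\]
and hence, after taking the $L_p(B_r(0))$ norm in $z$ (a ball of measure $\asymp r^n$),
\[
\sup_{x\in\R^n}\|\widehat f-A_{2^k}\widehat f(x)\|_{L_p(B_{2^k}(x))}\lesssim 2^{kn/p}\Bigg[\sum_{j<-k}2^{j+k}I_j+\sum_{j\geq -k}I_j\Bigg].
\]

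Combining this with the discretised seminorm and the assumption \eqref{EQweightedcampanato} in the form $w(2^k)\,2^{kn/p}\lesssim 1/v(2^{-k})$, and then changing variables $l=-k$ with $V(l):=v(2^l)$, we arrive at
\[
| \widehat f|_{C_{p,q}^w}^q\lesssim \sum_{l\in\Z}V(l)^{-q}\Bigg[\sum_{j<l}2^{j-l}I_j+\sum_{j\geq l}I_j\Bigg]^q.
\]
By Minkowski's inequality, it now suffices to bound the two pieces separately. For the first piece, write $\tilde A_j=2^j I_j$ and $\tilde B_l=(2^l V(l))^{-q}$: the first condition in \eqref{EQweightedcampanatocond2-} forces the sequence $2^k V(k)$ to grow geometrically, hence $\sum_{k\geq l}\tilde B_k\lesssim \tilde B_l$, and Hardy's inequality (Lemma~\ref{LEMhardyseries}(2)) gives $\sum_l \tilde B_l(\sum_{j<l}\tilde A_j)^q\lesssim \sum_l\tilde B_l\tilde A_l^q=\sum_l V(l)^{-q}I_l^q$. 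For the second piece, setting $B_l=V(l)^{-q}$, the second condition in \eqref{EQweightedcampanatocond2-} forces $V(k)$ to decay geometrically, hence $\sum_{k\leq l}B_k\lesssim B_l$, and Lemma~\ref{LEMhardyseries}(1) gives $\sum_l B_l(\sum_{j\geq l}I_j)^q\lesssim \sum_l V(l)^{-q}I_l^q$. Summing the two estimates yields \eqref{loctrMorrey}.

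The delicate point I expect is the kernel estimate together with the $j$-summation: one must verify that splitting $f$ annulus by annulus and taking the $L_p$-norm over $B_{2^k}(x)$ really produces the factor $\min(2^{k+j},1)$ in a form that fits Hardy's inequalities, and that the geometric growth/decay of $V$ extracted from \eqref{EQweightedcampanatocond2-} matches exactly the hypothesis needed in both Hardy steps. The case $q=\infty$ requires only the obvious modification (replacing the $\ell^q$ sums by suprema) and the exchange of integration orders in the kernel identity is justified \emph{a posteriori} by the finiteness of the right-hand side of \eqref{loctrMorrey}.
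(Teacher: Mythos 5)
Your proposal is correct, and it reaches \eqref{loctrMorrey} by a genuinely more direct route than the paper. The paper first proves the case $q=\infty$ via the same kernel bound $|\sin t|\leq\min\{|t|,1\}$, but it keeps the dependence on $|x-y|$, splits the $z$-integration at $1/|x-y|$, and controls the resulting factor $v(|x-y|^{-1})$ by Minkowski's inequality together with \eqref{EQweightedcampanato}; this yields a continuous sup-type bound (used again later for the limiting cases $\alpha=0,1$). For $q<\infty$ the paper then splits $f=f_{0,k}+f_{1,k}$ at the scale $2^{-k}$, applies the $q=\infty$ estimate with the $\gamma$-shifted weights $v_0(t)=t^{\gamma}v(t)$, $v_1(t)=t^{-\gamma}v(t)$ coming from a self-improved form of \eqref{EQweightedcampanatocond2-}, and finishes with Hardy's inequality. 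You instead discretize $|\widehat f|_{C_{p,q}^w}$ (legitimate, exactly as in the paper's own $q<\infty$ step, using $w(r)\asymp w(2r)$ and \eqref{EQequivcampanatoinf}), bound the kernel uniformly by $\min(r|\xi|,1)$, decompose $f$ into dyadic annuli, and apply Lemma~\ref{LEMhardyseries} twice; your observation that \eqref{EQweightedcampanatocond2-} forces geometric growth of $2^kv(2^k)$ and geometric decay of $v(2^k)$ is exactly the self-improvement the paper encodes through the exponent $\gamma$, so the Hardy hypotheses (for the unpowered weight sequences, as Lemma~\ref{LEMhardyseries} requires) do hold. The net effect is a shorter argument that avoids the near/far splitting and the auxiliary weights; what the paper's longer route buys is the intermediate continuous estimate reused in its subsequent remark.

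Two small points to tighten. First, the claim $|e^{-2\pi i(z,\xi)}-\phi_r(\xi)|\lesssim\min(r|\xi|,1)$ is best justified uniformly in $r$ by averaging, $e^{-2\pi i(z,\xi)}-\phi_r(\xi)=\frac{1}{|B_r|}\int_{B_r(0)}\big(e^{-2\pi i(z,\xi)}-e^{-2\pi i(y,\xi)}\big)\,dy$, and $2|\sin(\pi(z-y,\xi))|\leq\min\{2\pi|z-y||\xi|,2\}$ with $|z-y|\leq 2r$, rather than by an unquantified Taylor remark; note also that the triangle inequality over the annuli uses $p\geq 1$, which is assumed. Second, for $0<q<1$ the separation of the two pieces should invoke the $q$-subadditivity $(a+b)^q\leq a^q+b^q$ rather than Minkowski's inequality. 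Neither point affects the validity of the argument.
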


The right hand side of \eqref{loctrMorrey} is a weighted truncated Lebesgue  norm, cf. \eqref{localtruncated}.
Taking $v(r)=r^{-\alpha}$ with $0<\alpha=\lambda-\frac{n}{p}<1$ and $w(r)=r^{-\lambda}$, we obtain the following result for the classical Campanato space.

\begin{corollary}\label{THMcampanato}
	Let $1\le p<\infty$, $0<q\leq \infty$, and $0<\alpha=\lambda-\frac{n}{p}<1$. We have
	\[
	| \widehat{f}|_{C_{p,q}^\lambda}\lesssim
 \|  f\|_{T^{\alpha}_{q}L_1}.
	\]
\end{corollary}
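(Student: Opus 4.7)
The plan is to derive this corollary as a direct application of Theorem~\ref{THMcampanatoweighted} with the specific choice of weights $v(r)=r^{-\alpha}$ and $w(r)=r^{-\lambda}$, so that $C_{p,q}^w$ becomes the classical $C_{p,q}^\lambda$ and the right-hand side of \eqref{loctrMorrey} reduces to the truncated Lebesgue norm $\| f\|_{T^\alpha_q L_1}$ as defined in \eqref{localtruncated} with $p=1$.

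First I would verify that the chosen weights satisfy all the hypotheses of Theorem~\ref{THMcampanatoweighted}. The doubling conditions $v(r)\asymp v(2r)$ and $w(r)\asymp w(2r)$ are immediate since both are power functions. Condition \eqref{EQweightedcampanato} follows from the identity
\[
r^{n/p}w(r)v(1/r)=r^{n/p}\cdot r^{-\lambda}\cdot r^{\alpha}=r^{n/p-\lambda+\alpha}=1,
\]
using $\lambda=\alpha+n/p$. For condition \eqref{EQweightedcampanatocond2-}, since $0<\alpha<1$ we have $1-\alpha>0$, hence the geometric series
\[
\sum_{k=-\infty}^m 2^k v(2^k)=\sum_{k=-\infty}^m 2^{(1-\alpha)k}\asymp 2^{(1-\alpha)m}=2^m v(2^m),
\]
and likewise, since $\alpha>0$,
\[
\sum_{k=m}^\infty v(2^k)=\sum_{k=m}^\infty 2^{-\alpha k}\asymp 2^{-\alpha m}=v(2^m).
\]

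Once all hypotheses are checked, applying Theorem~\ref{THMcampanatoweighted} gives
\[
|\widehat f|_{C_{p,q}^\lambda}\lesssim \bigg(\sum_{k\in\Z}\bigg(2^{\alpha k}\int_{B_{2^{k+1}}(0)\setminus B_{2^k}(0)}|f(x)|\,dx\bigg)^q\bigg)^{1/q},
\]
with the obvious modification for $q=\infty$. By the definition \eqref{localtruncated} of the truncated Lebesgue space $T^{\alpha}_q L_1$ with integrability parameter $1$, the right-hand side is exactly $\|f\|_{T^\alpha_q L_1}$, which concludes the proof.

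There is no real obstacle here since Theorem~\ref{THMcampanatoweighted} does all the heavy lifting; the only verification step is the elementary checking of the hypotheses, which relies precisely on the fact that $0<\alpha<1$ (the upper bound $\alpha<1$ is needed for the first sum in \eqref{EQweightedcampanatocond2-}, and $\alpha>0$ for the second), together with the matching of exponents forced by $\lambda=\alpha+n/p$.
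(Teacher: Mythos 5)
Your proposal is correct and follows exactly the paper's route: the corollary is obtained by specializing Theorem~\ref{THMcampanatoweighted} to $v(r)=r^{-\alpha}$, $w(r)=r^{-\lambda}$, and your verification of \eqref{EQweightedcampanato} and \eqref{EQweightedcampanatocond2-} (using $\lambda=\alpha+\frac{n}{p}$ and $0<\alpha<1$) together with the identification of the right-hand side as $\|f\|_{T^{\alpha}_{q}L_1}$ is precisely the (mostly implicit) argument in the paper.
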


This and  Proposition 		\ref{lem8} (cf. \eqref{ggg}) immediately imply
\begin{remark}\label{THMcampanato'}
	Let $1\le p<\infty$, $1\leq q\leq \infty$, and $0<\alpha=\lambda-\frac{n}{p}<1$. We have
 \[
  \int_0^1 \big( t^{-\alpha} \omega(\widehat{f},t)_
  \infty\big)^q\,  \frac{dt}t
\lesssim\sum_{k\in \Z} \Big( 2^{\alpha k} \|f\|_{L_1( B_{2^{k+1}}(0)\backslash B_{2^k}(0))} \Big)^q.
\]
In particular, we have
 $
  |\widehat{f}|_{B_{\infty,1}^\alpha}
\lesssim   \big\||x|^\alpha f\big\|_1
$ and
 $
  |\widehat{f}|_{\textnormal{Lip}\, \alpha}
  \lesssim\sup\limits_{k\in \Z}
  \int\limits_{{2^{k}}<|x|<{2^{k+1}}}
  |x|^\alpha|f(x)|dx.
$
We note that estimating the Lipschitz norm of the Fourier transform is a classical problem in Fourier analysis, see \cite{bray, tik, sampson,  tit, zygmund}. Our estimate  refines the known one-dimensional result recently obtained \cite{volos}.
\end{remark}

\begin{proof}[Proof of Theorem~\ref{THMcampanatoweighted}]
	We start with the case $q=\infty$, which will be used to prove the general case by means of an interpolation argument.
	
We note that condition \eqref{EQweightedcampanatocond2-} has  the following self-improving property:
for any  weights satisfying \eqref{EQweightedcampanatocond2-},
  there exists a positive $\gamma$ such that
	\begin{equation}
	\label{EQweightedcampanatocond2}
	\sum_{k=-\infty}^m 2^{k(1-\gamma)} v(2^k)\lesssim  2^{m(1-\gamma)}v(2^m),\qquad \sum_{k=m}^\infty 2^{k\gamma} v(2^{k})\lesssim 2^{m\gamma}v(2^m),\qquad m\in \Z.
	\end{equation}

	We observe that it follows from \eqref{EQweightedcampanatocond2} that  $r^{\gamma} v(r)$ is almost decreasing, i.e.,
	\begin{equation}
		\label{EQalmostdecv_0}
		t^{\gamma}v(t)\lesssim s^{\gamma}v(s),\qquad \text{for }s\leq t.
	\end{equation}
	Then, of course, $v(r)$ is almost decreasing as well. Similarly, $r^{1-\gamma}v(r)$ is almost increasing, i.e.,
	\begin{equation}
		\label{EQalmostincv_0}
		s^{1-\gamma}v(s)\lesssim t^{1-\gamma}v(t),\qquad \text{for }s\leq t,
	\end{equation}
	as well as  $rv(r)$.
	
	 For $r>0$, we have
	\begin{align*}
		\int_{B_{r}(0)}|\widehat{f}(x+\xi)-A_r \widehat{f}(\xi)|^{p}\,  d x&  = \int_{B_{r}(0)} \bigg| \frac{1}{|B_{r}|} \int_{B_{r}(0)} \int_{\mathbb{R}^{n}} f(z)\big(e^{-2\pi i(x+\xi, z)}-e^{-2\pi i(y+\xi, z)}\big) d z \, d y\bigg|^{p} d x
		\\
		&\leq \int_{B_{r}(0)}\bigg(\frac{1}{|B_{r}|} \int_{B_{r}(0)} \int_{\mathbb{R}^{n}} 2|f(z)|\big|\sin {(\pi(x-y, z))}\big| d z \,d y\bigg)^{p} d x.
	\end{align*}
		Using that  $|\sin t|\leq \min\{1,|t|\}$, we continue the estimate as follows:
	\begin{align}
		&{\lesssim}
		\int_{B_{r}(0)}\bigg(\frac{1}{|B_{r}|} \int_{B_{r}(0)} \int_{|z|<\frac{1}{|x-y|}}|f(z) \| z||x-y|\, d z\, d y\bigg)^{p} d x\nonumber\\
		&\phantom{=}+ \int_{B_{r}(0)}\bigg(\frac{1}{|B_{r}|} \int_{B_{r}(0)} \int_{|z|>\frac{1}{|x-y|}}|f(z)|\, d z\, d y\bigg)^{p} d x\nonumber
		\\
		&\lesssim \bigg(\int_{B_{r}(0)}\bigg(\frac{1}{|B_{r}|} \int_{B_{r}(0)}v\big(|x-y|^{-1}\big)\, d y\bigg)^{p} d x\bigg)\nonumber\\		
		&\phantom{=} \times
		\bigg(\sup_{r>0}\bigg(\frac{1}{r v(r)}\int_{B_r(0)} |x||f(x)|\, dx + \frac{1}{v(r)} \int_{\R^n \backslash B_r(0)} |f(x)|\, dx\bigg)\bigg)^p.\label{EQsupcampanato}
	\end{align}
	On the one hand, Minkowski's inequality  and \eqref{EQweightedcampanato} yield
	\begin{align}\label{EQsupcampanato--}
		\int_{B_r(0)} \bigg(\frac{1}{\left|B_{r}\right|}\int_{B_r(0)} v\big(|x-y|^{-1}\big)\, dy\bigg)^p\, dx &\leq \int_{B_{2r}(0)}v\big(|x|^{-1}\big)^p\, dx\lesssim r^n v(1/r)^p
		\lesssim  \frac{1}{w(r)^p},
	\end{align}
	where we have used the fact that $v$ is almost decreasing and $v(r)\asymp v(2r)$ for $r>0$. Further, the supremum in \eqref{EQsupcampanato} is equivalent to
	\begin{align*}
		&\phantom{=}\sup_{m\in \Z}\bigg(\frac{1}{2^m v(2^{m})}\sum_{k=-\infty}^m \Big(\frac{ 2^kv(2^k)}{v(2^k)} \int_{B_{2^k}(0)\backslash B_{2^{k-1}}(0)} |f(x)|\, dx\Big)\\
		&\phantom{=} +\frac{1}{v(2^m)}\sum_{k=m}^{\infty} \Big(\frac{v(2^k)}{v(2^k)}\int_{B_{2^{k+1}}(0)\backslash B_{2^k}(0)}|f(x)|\, dx\Big)\bigg)\lesssim \sup_{k\in \Z} \frac{1}{v(2^k)} \int_{B_{2^{k+1}}(0)\backslash B_{2^k}(0)}|f(x)|\, dx,
	\end{align*}
	where the last inequality is a consequence of \eqref{EQweightedcampanatocond2}. Combining the above estimates, the result follows for the case $q=\infty$.
	
	For the case $q<\infty$, by \eqref{EQequivcampanatoinf}, the monotonicity on $r$ of $\inf_{c} \|f-c\|_{L_p(B_{r}(x))}$, and the equivalence $w(r)\asymp w(2r)$, we get
	\[
	|\widehat{f}|_{C_{p,q}^w}\asymp  \bigg(\sum_{k\in \Z} \Big(w(2^k) \sup_{x\in \R^n}  \|\widehat{f}- A_{2^k}{\widehat{f}}(x)\|_{L_p(B_{2^k}(x))}\Big)^q \bigg)^\frac{1}{q}.
	\]
	Writing
	\[
	f_{0,k}(x) = f(x)(1-\chi_{B_{2^{-k}}(0)}(x)), \qquad f_{1,k}(x)= f(x)\chi_{B_{2^{-k}}(0)}(x),
	\]
	for $k\in \Z$, it follows from the last estimate that
	\begin{equation*}
		|\widehat{f}|_{C_{p,q}^w}\lesssim  \bigg( \sum_{k\in \Z} \bigg( \frac{w(2^k)}{w_0(2^k)} |\widehat{f_{0,k}}|_{C_{p,\infty}^{w_0}}\bigg)^q \bigg)^\frac{1}{q} + \bigg( \sum_{k\in \Z} \bigg( \frac{w(2^k)}{w_1(2^k)} |\widehat{f_{1,k}}|_{C_{p,\infty}^{w_1}}\bigg)^q \bigg)^\frac{1}{q},
	\end{equation*}
	with $w_j(t)=t^{-\frac{n}{p}}v_j(1/t)^{-1}$, where $v_0(t)=t^{\gamma}v(t)$ and $v_1(t)=t^{-\gamma}v(t)$.
Note that, by \eqref{EQsupcampanato} and  \eqref{EQsupcampanato--}, we have
\begin{align}\label{EQsupcampanato--+}
|\widehat{f}|_{C_{p,\infty}^{w}}\lesssim
\sup_{r>0}\bigg(\frac{1}{r v(r)}\int_{B_r(0)} |x||f(x)|\, dx + \frac{1}{v(r)} \int_{\R^n \backslash B_r(0)} |f(x)|\, dx \bigg).
	\end{align}
Applying this estimate for $f_{j,k}$ and the weights $v_j, w_j$, $j=1,2$  (it is a routine to verify  that each couple $(v_j, w_j)$ satisfies both \eqref{EQweightedcampanato} and  \eqref{EQweightedcampanatocond2-}), we then establish
	\begin{align*}
		|\widehat{f}|_{C_{p,q}^w}&\lesssim \sum_{j=0}^1 \bigg(\sum_{k\in \Z} \bigg( \frac{w(2^k)}{w_j(2^k)} \sup_{m\in \Z} \Big( \frac{1}{2^m v_j(2^m)} \int_{B_{2^m}(0)} |x| |f_{j,k}(x)|\, dx\\
		&\phantom{=}+ \frac{1}{v_j(2^m)}
		\sum_{r=m}^\infty \int_{B_{2^{r+1}}(0)\backslash B_{2^r}(0)}|f_{j,k}(x)|\, dx\Big) \bigg)^q \bigg)^\frac{1}{q}=:A_0+A_1.
	\end{align*}
	
	By \eqref{EQalmostdecv_0} and \eqref{EQalmostincv_0}, we have that  $\frac{1}{v_0(r)}$ is almost increasing and $\frac{1}{rv_0(r)}$ is almost decreasing. Thus, we get
	\begin{align*}
		&\phantom{=}\sup_{m\in \Z} \bigg( \frac{1}{2^m v_0(2^m)} \int_{B_{2^m}(0)} |x| |f_{0,k}(x)|\, dx+ \frac{1}{v_0(2^m)}
		\sum_{r=m}^\infty \int_{B_{2^{r+1}}(0)\backslash B_{2^r}(0)}|f_{0,k}(x)|\, dx\bigg)\\
		&=\sup_{m \geq -k} \bigg( \frac{1}{2^m v_0(2^m)} \int_{B_{2^m}(0)} |x| |f_{0,k}(x)|\, dx+ \frac{1}{v_0(2^m)}
		\sum_{r=m}^\infty \int_{B_{2^{r+1}}(0)\backslash B_{2^r}(0)}|f_{0,k}(x)|\, dx\bigg)\\
		&\lesssim \sup_{m \geq -k} \bigg( \sum_{r=-k}^{m} \frac{1}{2^rv_0(2^r)} \int_{B_{2^{r+1}}(0)\backslash B_{2^r}(0)} |x| |f(x)|\, dx+
		\sum_{r=m}^\infty \frac{1}{v_0(2^r)} \int_{B_{2^{r+1}}(0)\backslash B_{2^r}(0)}|f(x)|\, dx\bigg)\\
		&\asymp   \sum_{r=-k}^\infty \frac{1}{v_0(2^r)}\int_{B_{2^{r+1}}(0)\backslash B_{2^r}(0)}|f(x)|\, dx.
	\end{align*}
	Similarly as above, $\frac{1}{v_1}$ is almost increasing and $\frac{1}{rv_1(r)}$ is almost decreasing. Hence, we obtain
	\begin{align*}
		&\phantom{=}\sup_{m\in \Z} \bigg( \frac{1}{2^m v_1(2^m)} \int_{B_{2^m}(0)} |x| |f_{1,k}(x)|\, dx+ \frac{1}{v_1(2^m)}
		\sum_{r=m}^\infty \int_{B_{2^{r+1}}(0)\backslash B_{2^r}(0)}|f_{1,k}(x)|\, dx\bigg)\\
		&=\sup_{m\leq -k} \bigg( \frac{1}{2^m v_1(2^m)} \int_{B_{2^m}(0)} |x| |f_{1,k}(x)|\, dx+ \frac{1}{v_1(2^m)}
		\sum_{r=m}^\infty \int_{B_{2^{r+1}}(0)\backslash B_{2^r}(0)}|f_{1,k}(x)|\, dx\bigg) \\
		&\lesssim \sup_{m\leq -k}   \bigg( \sum_{r=-\infty}^{m-1} \frac{1}{2^r v_1(2^r)} \int_{B_{2^{r+1}}(0)\backslash B_{2^r}(0)} |x||f(x)|\, dx+
		\sum_{r=m}^{-k} \frac{1}{v_1(2^r)} \int_{B_{2^{r+1}}(0)\backslash B_{2^r}(0)}|f(x)|\, dx\bigg)\\
		& \asymp  \sum_{r=-\infty}^{-k} \frac{1}{v_1(2^r)}\int_{B_{2^{r+1}}(0)\backslash B_{2^r}(0)}|f(x)|\, dx.
	\end{align*}
	Thus, by \eqref{EQweightedcampanato},
	\begin{align*}
		A_0&\lesssim \bigg(\sum_{k\in \Z} \Big(\frac{w(2^{-k})}{w_0(2^{-k})}  \sum_{r=k}^\infty \frac{1}{v_0(2^r)}\int_{B_{2^{r+1}}(0)\backslash B_{2^r}(0)}|f(x)|\, dx \Big)^q\bigg)^\frac{1}{q}\\
		&= \bigg(\sum_{k\in \Z} \Big(2^{-k\frac{n}{p}}w(2^{-k})v(2^k) 2^{k\gamma} \sum_{r=k}^\infty \frac{1}{v_0(2^r)}\int_{B_{2^{r+1}}(0)\backslash B_{2^r}(0)}|f(x)|\, dx \Big)^q\bigg)^\frac{1}{q}\\
		&\lesssim \bigg(\sum_{k\in \Z}\Big( \frac{1}{v(2^r)}\int_{B_{2^{r+1}}(0)\backslash B_{2^r}(0)}|f(x)|\, dx \Big)^q\bigg)^\frac{1}{q},
	\end{align*}
	where the last estimate follows from Hardy's inequality. Correspondingly, for $A_1$ we have
	\begin{align*}
		A_1&\lesssim\bigg(\sum_{k\in \Z}\Big(2^{-k\frac{n}{p}}w(2^{-k})v(2^k) 2^{-k\gamma}  \sum_{r=-\infty}^k \frac{1}{v_1(2^r)}\int_{B_{2^{r+1}}(0)\backslash B_{2^r}(0)}|f(x)|\, dx \Big)^q\bigg)^\frac{1}{q}\\
		&\lesssim \bigg(\sum_{k\in \Z}\Big( \frac{1}{v(2^r)}\int_{B_{2^{r+1}}(0)\backslash B_{2^r}(0)}|f(x)|\, dx \Big)^q\bigg)^\frac{1}{q},
	\end{align*}
	which concludes the proof.
\end{proof}

\begin{remark}
To define the weighted Campanato norm in the left hand side of \eqref{loctrMorrey}, we need to assume that $w\in \Xi_{p,q}^{n+p}$.
Let us see that this condition follows from \eqref{EQweightedcampanato} and \eqref{EQweightedcampanatocond2-}. Indeed,
to verify that $\big\| r^{\frac{n}{p}+1-\frac{1}{q}}w(r)\big\|_{L_q(0,1)} <\infty$, we get
\[
	\big\| r^{\frac{n}{p}+1-\frac{1}{q}}w(r)\big\|_{L_q(0,1)}^q\lesssim
\int_{1}^\infty \frac{dr}{r^{q +1}v^q(r)}
 \lesssim
 \int_{1}^\infty \frac{dr}{r^{1+\gamma}}
 \lesssim 1,
\]
where we have used that $r^{1-\gamma}v(r)$ is almost increasing. Similarly,
$
	\big\|r^{-\frac{1}{q}}w(r)\big\|_{{L_q(1,\infty)}}^q \lesssim \big\|r^{-\frac{n}{p}-\frac{1}{q}}v(1/r)^{-1}\big\|_{{L_q(1,\infty)}}^q
\lesssim1$, since $v$ is almost decreasing.

\end{remark}

\begin{remark}
	By carefully examining the proof of Theorem~\ref{THMcampanatoweighted} for $q=\infty$, we note that we
 have proved that
\begin{equation*}
	\frac{1}{r^\lambda} \bigg(\int_{B_{r}(0)}|\widehat{f}(x+\xi)-A_r \widehat{f}(\xi)|^{p}\,  d x\bigg)^\frac{1}{p}\lesssim
\sup _{s>0}\bigg( \frac{1}{s^{1-\alpha}} \int_{B_{s}(0)}|y||f(y)|\, d y+ s^{\alpha} \int_{\R^n \backslash B_s(0)}|f(y)|\, d y\bigg),
\end{equation*}
cf. \eqref{EQsupcampanato--+} with  $v(r)=r^{-\alpha}, w(r)=r^{-\lambda}$, $0<\alpha=\lambda-\frac{n}{p}<1$.
In fact this estimate also holds for the limiting parameters $\alpha=0$ and $\alpha=1$, since the restriction comes from the assumption \eqref{EQweightedcampanatocond2-}, which was not used (we have only used that $v$ is almost decreasing).
In particular, we have for $\alpha=0$
	\[
	| {\widehat{f}}|_{C_{p,\infty}^{n/p}}
 \lesssim \sup _{m\in \Z}\bigg( \frac{1}{2^{m}} \int_{B_{2^m}(0)}|y||f(y)|\, d y+ \int_{\R^n \backslash B_{2^m}(0)}|f(y)| \,d y\bigg) \asymp \|f\|_{L_1},
	\]
	which is trivial, since $\|\widehat{f}\|_{BMO}\lesssim \|\widehat{f}\|_{L_\infty}\leq \| f\|_{L_1}$, cf. \eqref{EQcampanatocharact}. On the other hand, for $\alpha=1$ (which corresponds to $\textrm{Lip } 1$, cf. \eqref{EQcampanatocharact} with $\lambda = \frac{n}{p}+1$), we have
	\[
	| {\widehat{f}}|_{C_{p,\infty}^{n/p+1}}\lesssim \sup _{m\in \Z}\bigg( \int_{B_{2^m}(0)}|y||f(y)|\, d y+ 2^{m} \int_{\R^n \backslash B_{2^m}(0)}|f(y)| \,d y\bigg) \asymp \||x|f\|_{L_1}.
	\]
\end{remark}

\vspace{0.4mm}
\appendix
\section{No Fourier inequalities between Morrey spaces}
\label{SECnofourierineq}
\subsection{Unweighted case}

\begin{theorem}
	For any $1\leq p,q<\infty$, $0\leq \nu \leq \frac{n}{q}$, and $0< \lambda\leq \frac{n}{p}$, there is no constant $C$ such that
	\begin{equation}
		\label{EQpittmorrey-nopitt}
		\| \widehat{f} \|_{M^{\nu}_q(\R^n)}\le C\|f \|_{M^{\lambda}_p(\R^n)}
	\end{equation}
	for every $f\in M^{\lambda}_p(\R^n)\cap L_1(\R^n)$.
\end{theorem}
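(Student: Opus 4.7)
The plan is to exhibit, for every admissible choice of parameters, a sequence $\{f_N\}\subset M_p^\lambda(\R^n)\cap L_1(\R^n)$ such that $\|f_N\|_{M_p^\lambda}$ stays bounded while $\|\widehat{f_N}\|_{M_q^\nu}\to\infty$. I would split the argument according to whether a certain scaling identity is satisfied.

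First I analyze the behavior under dilations. For a fixed nontrivial $\varphi\in C_c^\infty(\R^n)$ and $R>0$, set $\varphi_R(x)=\varphi(x/R)$. A change of variables in the definition of the Morrey norm yields
\[
\|\varphi_R\|_{M_p^\lambda}\asymp R^{\,n/p-\lambda}, \qquad \|\widehat{\varphi_R}\|_{M_q^\nu}\asymp R^{\,n/q'+\nu}.
\]
If \eqref{EQpittmorrey-nopitt} held with a constant independent of $R$, the two exponents would have to agree, which forces the scaling identity $\lambda+\nu=n/p+n/q-n$. As $\lambda>0$ and $\nu\ge 0$, this identity can hold only when $1/p+1/q>1$. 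Whenever the identity fails, sending $R\to 0^+$ or $R\to\infty$ immediately refutes \eqref{EQpittmorrey-nopitt}.

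In the remaining matched-scaling regime $\lambda+\nu=n/p+n/q-n$ with $1/p+1/q>1$, dilations preserve the ratio, so a finer construction is needed. I would test against the Dirichlet-modulated bump
\[
f_N(x)=\chi_{B_1(0)}(x)\sum_{k=0}^{N-1}e^{2\pi i k x_1},\qquad N\in\N,
\]
whose Fourier transform $\widehat{f_N}(y)=\sum_{k=0}^{N-1}\widehat{\chi_{B_1}}(y-k e_1)$ is a superposition of $N$ copies of a fixed bump along the $y_1$-axis. Using the one-dimensional Dirichlet-kernel asymptotics $\|D_N\|_{L_p([-1/2,1/2])}\asymp N^{1/p'}$, together with its main-lobe behavior at the origin, one expects
\[
\|f_N\|_{M_p^\lambda}\asymp \max\bigl(N^{1/p'},\,N^{1+\lambda-n/p}\bigr),
\]
depending on whether the Morrey supremum is realized at scale $r\sim 1$ (full support) or $r\sim N^{-1}$ (main lobe). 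On the Fourier side, a ball of radius $r\asymp N$ capturing all $N$ translates gives the lower bound $\|\widehat{f_N}\|_{M_q^\nu}\gtrsim N^{1/q-\nu}$. Substituting the matched-scaling identity, the ratio simplifies to at least $N^{(n-1)/q'}$ in the sub-range $\lambda\ge (n-1)/p$, which is unbounded for $n\ge 2$ and $q>1$.

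The principal obstacle lies precisely in the matched-scaling case: the Morrey norm of $f_N$ has two competing scales and the dominant one varies across sub-ranges of $(\lambda,\nu)$, so in each sub-range the correct scale must be identified to justify the asymptotics. The remaining degenerate sub-cases — namely $n=1$, $q=1$, or the sub-range $\lambda<(n-1)/p$ where the Dirichlet example saturates differently — are not covered by the modulated-bump construction and would be handled by a variant such as a lacunary superposition $\sum_{k=1}^N\chi_{B_1(x_k)}$, whose Fourier transform is concentrated near the origin and gives the required growth by a parallel computation.
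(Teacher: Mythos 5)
Your dilation argument is correct and cleanly disposes of all parameters with $\lambda+\nu\neq \frac{n}{p}+\frac{n}{q}-n$; that reduction is a genuinely different (and in that regime simpler) route than the paper's. The problem is that the matched-scaling case is where the real content of the theorem lies, and there your argument has a genuine gap. First, the Dirichlet-bump computation is only asserted: the lower bound $\|\widehat{f_N}\|_{M_q^\nu}\gtrsim N^{1/q-\nu}$ requires showing that the $N$ translates $\widehat{\chi_{B_1}}(y-ke_1)$ do not destructively interfere near each center $ke_1$; since $\widehat{\chi_{B_1}}$ is a sign-changing Bessel-type function with only polynomial decay, the tail sum $\sum_{j\neq k}\widehat{\chi_{B_1}}((k-j)e_1+O(1))$ is not obviously dominated by the main term (this is fixable, e.g.\ by spreading the frequencies, but it is not done). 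Second, and more seriously, even granting those asymptotics your construction only yields growth $N^{(n-1)/q'}$, so it says nothing when $n=1$, when $q=1$, or when $\lambda<(n-1)/p$ — and you defer all of these to ``a parallel computation'' with a lacunary superposition $\sum_{k=1}^N\chi_{B_1(x_k)}$. That deferred computation is essentially the entire proof in the paper, and it is not parallel to yours: its Fourier transform is $\widehat{\chi_{B_1}}(y)\sum_k e^{-2\pi i (x_k,y)}$, which is \emph{not} concentrated near the origin; the growth comes from Zygmund's theorem that a lacunary exponential sum has $L_q(0,1)$-norm $\asymp\sqrt{N}$ for every $1\le q<\infty$, combined with the fact that lacunarity of the $x_k$ and the hypothesis $\lambda>0$ keep $\|\sum_k\chi_{B_1(x_k)}\|_{M_p^\lambda}$ uniformly bounded. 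Neither of these two key inputs appears in your proposal, so the cases they are meant to cover remain unproved.

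For comparison, the paper avoids your case analysis entirely: it takes $f_N$ to be a tensor product of one-dimensional lacunary sums $\sum_{k=1}^N\chi_{(2^k,2^k+1)}$, shows $\|f_N\|_{M_p^\lambda}\lesssim 1$ uniformly (using $\lambda>0$ and the lacunary spacing), and gets $\|\widehat{f_N}\|_{M_q^\nu}\gtrsim\|\widehat{f_N}\|_{L_q((0,1)^n)}\asymp N^{n/2}$ from Zygmund's lacunary estimate — one construction valid for all admissible $p,q,\lambda,\nu$ simultaneously. If you want to salvage your structure, you must either carry out the lacunary computation in full for the leftover sub-cases (at which point it supersedes the Dirichlet bump), or find another construction for $n=1$, $q=1$, and $\lambda<(n-1)/p$ in the matched regime.
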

\begin{proof}
	For $N\in\N$, define
	\[
	g_N(x)= \sum_{k=1}^N\chi_{(2^{k},2^{k}+1)}(x),\qquad x\in \R,
	\]
	and
	\[
	f_N(x_1,\ldots ,x_n)=\prod_{j=1}^n g_N(x_j), \qquad (x_1,\ldots , x_n)\in \R^n.
	\]
	On the one hand, for any $N\in \N$ one has
	$\label{EQfnnorm}
		\| f_N\|_{M_p^\lambda(\R^n)} \lesssim1. $
	On the other hand, it is clear that
	\[
	\widehat{g}_N(y)= \frac{1-e^{-2\pi iy}}{2\pi iy} \sum_{k=1}^N e^{-2\pi i 2^k y}, \qquad y\in \R,
	\]
	and by definition,
$
	\widehat{f}_N(x_1,\ldots ,x_n)=\prod_{j=1}^n \widehat{g}_N(x_j) ,\quad \text{and}\quad \|\widehat{f}_N\|_{L_q((0,1)^n)}=\| \widehat{g}_N\|^n_{L_q(0,1)}.
$
Thus, we deduce from Zygmund's result on lacunary trigonometric series \cite[Ch. V]{zygmund} that for any $1\leq q<\infty$,
	\[
	\| \widehat{g}_N\|_{L_q(0,1)} = \bigg( \int_0^1 \frac{|1-e^{-2\pi iy}|^q}{(2\pi y)^q} \bigg| \sum_{k=1}^N e^{-2\pi i 2^k y} \bigg|^q dy\bigg)^{\frac1q} \asymp \bigg\| \sum_{k=1}^{N} e^{-2\pi i 2^k y}\bigg\|_{L_{q}(0,1)}\asymp \sqrt{N},
	\]
	and hence
	\[
	\| \widehat{f}_N \|_{M^{\nu}_q(\R^n)} =\sup_{x\in \R^n, \,r>0} r^{-\nu}\| \widehat{f}_N\|_{L_q(B(x,r))}\gtrsim  \| \widehat{f}_N\|_{L_q((0,1)^n)}
\asymp N^{\frac{n}{2}}.
	\]
	Finally, assuming \eqref{EQpittmorrey-nopitt} is true, the result follows by contradiction.

\end{proof}

\subsection{Weighted case}
For simplicity we restrict ourselves to the one-dimensional case. We claim that there is no constant $C$ such that
\begin{equation}
	\label{EQweightedmorreymorrey}
	\||y|^{-\beta} \widehat{f} \|_{M^{\nu}_q(\R)} \leq C\||x|^\gamma f \|_{M^{\lambda}_p(\R)}
\end{equation}
holds
for every $f\in L_1(\R)$ for which the right hand side of \eqref{EQweightedmorreymorrey} is finite,
provided $ 0\leq \nu\leq \frac{1}{q}$ and $0<\lambda, \gamma\leq \frac{1}{p}$.

To construct a counterexample,  we make use of the so-called ``ultraflat polynomials'' \cite{kahane}, closely related to the Rudin-Shapiro construction \cite{rudin,shapiro}: for every $N\in \N$, there exists a trigonometric polynomial
\[
P_N(y)=\sum_{n=0}^{N} \varepsilon_n^N e^{-2\pi iny},\qquad \varepsilon_n^N\in \C, \quad |\varepsilon_n^N|=1,
\]
such that $|P_N(y)|\asymp \sqrt{N}$ uniformly in $(0,1)$. Given $N$, we define
\[
f_N(x)=\begin{cases}
	\varepsilon_n^N,&\text{if }x\in [n,n+1),\, n\in \{0,1,\ldots,  N\},\\
	0,&\text{otherwise}.
\end{cases}
\]
We easily obtain
\[
\| |x|^\gamma f_N\|_{M_p^\lambda} \asymp\sup_{0<r<N+1} r^{-\lambda} \bigg( \int^{N+1}_{N+1-r} x^{\gamma p}\, dx \bigg)^\frac{1}{p} \asymp N^{\gamma-\lambda+\frac{1}{p}}.
\]
On the other hand, since
\[
\widehat{f}_N(y) = \frac{1-e^{-2\pi iy}}{2\pi iy} P_N(y),\]
and $
\big| \frac{1-e^{-2\pi iy}}{2\pi iy}\big| \asymp 1
$
for $y\leq \frac{1}{2\pi}$, we get
\[
\| |x|^{-\beta}\widehat{f}_N \|_{M_q^\nu}\gtrsim \||x|^{-\beta}\widehat{f}_N \|_{L_q\big(\frac{1}{4\pi},\frac{1}{2\pi}\big)}\gtrsim\|P_N \|_{{L_q\big(\frac{1}{4\pi},\frac{1}{2\pi}}\big)}  \asymp N^\frac{1}{2}.
\]
If \eqref{EQweightedmorreymorrey} holds, then
\[
N^\frac{1}{2} \lesssim \| |x|^{-\beta}\widehat{f}_N \|_{M_q^\nu(\R)}\lesssim \||x|^\gamma f_N \|_{M^{\lambda}_p(\R)} \asymp N^{\gamma-\lambda+\frac{1}{p}}.
\]
Letting $N\to \infty$, we obtain that, necessarily,
\begin{equation}\label{cond vsp}
\frac{1}{2}\leq \gamma-\lambda+\frac{1}{p}.
\end{equation}
Since $\gamma\le1/p$, if $p\geq 4$, we have
\[
 \gamma-\lambda+\frac{1}{p} \le\frac{2}{p}-\lambda<\frac{2}{p}\le \frac{1}{2},
\]
which
contradicts \eqref{cond vsp}.
Thus,
\eqref{EQweightedmorreymorrey} does not hold for $p\geq 4$.

{\bf Acknowledgements}. The authors would like to thank \'{O}scar Dom\'inguez and Miquel Saucedo
for their helpful advices.

\end{document}